\newtheorem{theorem}{Theorem}
\newtheorem{definition}[theorem]{Definition}
\newtheorem{condition}[theorem]{Condition}
\newtheorem{preremark}[theorem]{Remark}
\newenvironment{remark}    {\begin{preremark}\rm}{\end{preremark}}
\newtheorem{preexample}{Example}
\newenvironment{example}    {\begin{preexample}\rm}{\end{preexample}}
\numberwithin{equation}{section}
\numberwithin{preexample}{section}
\numberwithin{theorem}{section}
\begin{document}

\title{Infinite Swapping using IID Samples}

\author{Paul Dupuis}
\affiliation{%
  \institution{Division of Applied Mathematics, Brown University}
  \city{Providence}
  \state{RI}
  \postcode{02906}
  \country{USA}}

\author{Guo-Jhen Wu}
\affiliation{%
  \institution{Division of Applied Mathematics, Brown University}
  \city{Providence}
  \state{RI}
  \postcode{02906}
  \country{USA}}

\author{Michael Snarski}
\affiliation{%
  \institution{Division of Applied Mathematics, Brown University}
  \city{Providence}
  \state{RI}
  \postcode{02906}
  \country{USA}}

\begin{abstract}
We propose a new method for estimating rare event probabilities when
independent samples are available. It is assumed that the underlying
probability measures satisfy a large deviations principle with a scaling
parameter $\varepsilon$ that we call temperature. We show how by combining
samples at different temperatures, one can construct an estimator with greatly
reduced variance. Although as presented here the method is not as broadly
applicable as other rare event simulation methods, such as splitting or
importance sampling, it does not require any problem-dependent constructions.

\end{abstract}

%
%
\begin{CCSXML}
<ccs2012>
<concept>
<concept_id>10002950.10003648.10003671</concept_id>
<concept_desc>Mathematics of computing~Probabilistic algorithms</concept_desc>
<concept_significance>500</concept_significance>
</concept>
<concept>
<concept_id>10010147.10010341.10010349.10010353</concept_id>
<concept_desc>Computing methodologies~Rare-event simulation</concept_desc>
<concept_significance>500</concept_significance>
</concept>
</ccs2012>
\end{CCSXML}

\ccsdesc[500]{Mathematics of computing~Probabilistic algorithms}
\ccsdesc[500]{Computing methodologies~Rare-event simulation}
%
%

\keywords{Large deviation principle, Monte Carlo method, parallel tempering, rare event simulation}

\maketitle

\section{Introduction}

The purpose of this note is to introduce a new method for estimating
probabilities of rare events and expected values that are largely determined
by rare events. In the form developed here, the method makes significant
assumptions on the probability measures involved, and for this reason is less
broadly applicable than other methods such as importance sampling and
splitting. However, when applicable it is remarkably easy to use, and gives
large variance reduction while requiring very little in the way of preliminary
constructions, such as importance functions.

The method assumes one can sample directly from the target distribution and
certain related distributions. In particular, the generation of samples is not
based on the simulation of a Markov chain. However, it is directly motivated
by a scheme known as infinite swapping (INS) that is applied to systems
sampled using MCMC and for which one does not sample directly from the target
distribution. Although the focus of this paper will be to introduce and
optimize the scheme in a large deviation limit when direct sampling is
possible, the lessons learned here can be applied to the more generally
applicable INS scheme, and this will be a topic for future work.

An outline of the paper is as follows. In the next section we state
assumptions, specify the estimation problem of interest, and introduce the
estimator. Section \ref{sec:performance} analyzes the performance of the
scheme in the large deviation limit by characterizing the decay rate of the
second moment of the estimator as a function of certain design parameters, and
then shows how these parameters can be selected to optimize the decay rate.
Section \ref{sec:extensions} presents generalizations of the framework, and
computational examples are given in Section \ref{sec:examples}. The method has
certain features in common with schemes often used for these problems
(importance sampling and multi-level splitting), and a discussion on these
points is given in Section \ref{sec:relations}. We end with a section of
concluding remarks and an appendix that gives proofs of various supporting results.

\section{Problem Formulation and Definition of the Estimator}

\label{sec:formulation}

\subsection{Problems of interest}

Consider a collection of probability measures $\{\mu^{\varepsilon}\}_{\varepsilon
\in(0,\infty)}$ on a Polish space $\mathcal{X}$ (i.e., a complete seperable metric space). A rate function is a mapping $I:\mathcal{X} \rightarrow [0,\infty]$ such that $\{x:I(x) \leq M\}$ is compact for each $M \in [0,\infty)$. We assume $\{\mu^{\varepsilon}\}$ satisfies the following properties:

\begin{enumerate}
\item $\{\mu^{\varepsilon}\}$ satisfies a large deviation principle (LDP)
\cite{dupell4} with rate function $I$ that is continuous on its domain of
finiteness and scaling $\varepsilon$,

\item the measures $\mu^{\varepsilon/\alpha_{1}}$ and $\mu^{\varepsilon
/\alpha_{2}}$ are mutually absolutely continuous for $\alpha_{1},\alpha_{2}%
\in(0,1]$, and we can explicitly compute quantities such as
\[
\frac{\mu^{\varepsilon/\alpha_{1}}(dx_{1})\mu^{\varepsilon/\alpha_{2}}%
(dx_{2})}{\mu^{\varepsilon/\alpha_{1}}(dx_{1})\mu^{\varepsilon/\alpha_{2}%
}(dx_{2})+\mu^{\varepsilon/\alpha_{2}}(dx_{1})\mu^{\varepsilon/\alpha_{1}%
}(dx_{2})},
\]
i.e., likelihood ratios for the corresponding product measure under permutations.
\end{enumerate}

\begin{example}
\label{exa:1}Consider the case $\mathcal{X}=\mathbb{R}^{d}$, and
\[
\mu^{\varepsilon}\left(  dx\right)  =\frac{1}{Z^{\varepsilon}}e^{-\frac
{1}{\varepsilon}I(x)}dx,\quad Z^{\varepsilon}\doteq\int_{\mathbb{R}^{d}%
}e^{-\frac{1}{\varepsilon}I(x)}dx,
\]
where $I:$\ $\mathbb{R}^{d}\rightarrow\lbrack0,\infty)$\ is a continuous rate
function and
\[
\liminf_{M\rightarrow\infty}\inf_{x:\left\Vert x\right\Vert =M}\frac{I\left(
x\right)  }{M}>0
\]
(the latter condition in particular guarantees that $Z^{\varepsilon}$\ is
finite). Then $\{\mu^{\varepsilon}\}$\ automatically satisfies the LDP with
rate function $I$\ (see Theorem \ref{thm:ldp} in the Appendix). A basic
example is $I(x)=\frac{1}{2}\left\langle x-b,B^{-1}(x-b)\right\rangle $\ with
$B\geq0$\ symmetric, where $\left\langle \cdot,\cdot\right\rangle $\ is the standard inner product on $\mathbb{R}^{d}$ , in which case the associated $\mu^{\varepsilon}$\ is
Gaussian with mean $b$\ and covariance matrix $\varepsilon B$.
\end{example}

Additional examples are given in Section \ref{sec:examples}. We are interested
in estimating \textit{rare event} probabilities such as
\begin{equation}
\mu^{\varepsilon}(A) \label{eqn:setAprob}%
\end{equation}
for a set $A\subset\mathcal{X}$ that does not contain any of the minimizers of
$I$, and \textit{risk-sensitive} functionals \cite{whi} of the form%
\begin{equation}
\int_{\mathcal{X}}e^{-\frac{1}{\varepsilon}F^{\varepsilon}(x)}\mu
^{\varepsilon}(dx) \label{eqn:riskfunction}%
\end{equation}
for various classes of functions $F^{\varepsilon}$. 

\subsection{The INS estimator}

To introduce the estimator we focus first on the case of risk-sensitive
functionals as in (\ref{eqn:riskfunction}). We also simplify by assuming that
$\{\mu^{\varepsilon}\}$ satisfies conditions similar to Example \ref{exa:1},
and that $F^{\varepsilon}=F$ for all $\varepsilon$. Later in Section
\ref{sec:extensions}, we outline extensions where the measures do not take
this exact form and $F^{\varepsilon}$ depends on $\varepsilon$.

\begin{condition}
\label{con:1}$\{\mu^{\varepsilon}\}$ takes the form $\mu^{\varepsilon
}(dx)=\frac{1}{Z^{\varepsilon}}e^{-\frac{1}{\varepsilon}I(x)}\gamma(dx)$ for a
reference measure $\gamma$ on $\mathcal{X}$ and satisfies the LDP with rate
$I$.
\end{condition}

The definition of the INS estimator requires certain parameters, which we now
introduce. Given a fixed $K\in\mathbb{%
\mathbb{N}
}$, consider numbers $1=\alpha_{1}\geq\alpha_{2}\geq\cdots\geq\alpha_{K}\geq
0$. In a physical system $\varepsilon$ would be a parameter that plays the
role of temperature, and the parameters $\varepsilon/\alpha_{i},i>1$ define
alternative temperatures (larger than the starting temperature since
$\alpha_{i}\leq1$). The terminology \textit{infinite swapping} is inherited
from an analogous estimator constructed in a dynamical setting, in which
swapping of states allows parallel replicas of the system at different
temperatures to be linked. In the present setting, this linkage will be
accomplished through appropriate weights. Letting $\boldsymbol{\alpha}%
\doteq(\alpha_{1},\alpha_{2},\ldots,\alpha_{K})$ and $\mathbf{x}\doteq
(x_{1},x_{2},\ldots,x_{K})$ with each $x_{j}\in\mathcal{X}$, we define the
weights
\begin{equation}
\rho^{\varepsilon}\left(  \mathbf{x};\boldsymbol{\alpha}\right)  \doteq
\frac{e^{-\frac{1}{\varepsilon}\sum_{j=1}^{K}\alpha_{j}I\left(  x_{j}\right)
}}{\sum_{\tau\in\Sigma_{K}}e^{-\frac{1}{\varepsilon}\sum_{j=1}^{K}\alpha
_{j}I\left(  x_{\tau\left(  j\right)  }\right)  }}. \label{eqn:defofrho}%
\end{equation}
Let $\{X_{j}^{\varepsilon}\}_{j\in\{1,\ldots,K\}}$ be independent with
$X_{j}^{\varepsilon}\sim\mu^{\varepsilon/\alpha_{j}}$, where $\sim$ means that
$X_{j}^{\varepsilon}$ has the distribution $\mu^{\varepsilon/\alpha_{j}}$. Let
$\Sigma_{K}$ denote the set of permutations on $\{1,\ldots,K\}$, and for
$\sigma\in\Sigma_{K}$ let $\mathbf{X}_{\sigma}^{\varepsilon}$ denote
$(X_{\sigma\left(  1\right)  }^{\varepsilon},\ldots,X_{\sigma\left(  K\right)
}^{\varepsilon})$. Then a single sample of the $K$-temperature INS estimator
based on $\boldsymbol{\alpha}$\ for estimating $\int_{\mathcal{X}}e^{-\frac
{1}{\varepsilon}F(x)}\mu^{\varepsilon}(dx)$ is
\[
\hat{\eta}_{\mbox{\tiny{INS}}}^{\varepsilon}\left(  \mathbf{X}^{\varepsilon
};\boldsymbol{\alpha}\right)  =\sum\nolimits_{\sigma\in\Sigma_{K}}\rho^{\varepsilon
}\left(  \mathbf{X}_{\sigma}^{\varepsilon};\boldsymbol{\alpha}\right)
e^{-\frac{1}{\varepsilon}F\left(  X_{\sigma\left(  1\right)  }^{\varepsilon
}\right)  }.
\]
Although $\hat{\eta}_{\mbox{\tiny{INS}}}^{\varepsilon}$ depends on the number
$K$ of temperatures, to simplify notation this is not made explicit. The
analogous estimator when applied to the problem of approximating probabilities
[i.e., $F(x)=\infty 1_{A^{c}}(x)$] will be denoted by $\hat{\theta
}_{\mbox{\tiny{INS}}}^{\varepsilon}$. Since we discuss various unbiased
estimators, a subscript (e.g., INS) is sometimes used to distinguish different
estimators. If the subscript is missing, then it is always the INS estimator
that is meant.

\begin{remark}
We will always assume $\boldsymbol{\alpha}$\ satisfies the ordering
$1=\alpha_{1}\geq\alpha_{2}\geq\cdots\geq\alpha_{K}\geq0$.
\end{remark}

Before proceeding we establish basic properties of the estimator.

\begin{lemma}
Assume Condition \ref{con:1}. Then $\hat{\eta}^{\varepsilon}\left(
\mathbf{X}^{\varepsilon};\boldsymbol{\alpha}\right)  $ is an unbiased
estimator of $\int_{\mathcal{X}}e^{-\frac{1}{\varepsilon}F(x)}\mu
^{\varepsilon}(dx)$.
\end{lemma}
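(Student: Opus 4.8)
The plan is to compute $\mathbb{E}[\hat{\eta}^{\varepsilon}(\mathbf{X}^{\varepsilon};\boldsymbol{\alpha})]$ directly by exploiting the symmetry built into the weights $\rho^{\varepsilon}$. First I would write out the expectation: since the $X_j^{\varepsilon}$ are independent with $X_j^{\varepsilon}\sim\mu^{\varepsilon/\alpha_j}$, the joint law of $\mathbf{X}^{\varepsilon}$ is the product measure $\bigotimes_{j=1}^K \mu^{\varepsilon/\alpha_j}(dx_j)$. Using Condition \ref{con:1}, $\mu^{\varepsilon/\alpha_j}(dx_j)=\frac{1}{Z^{\varepsilon/\alpha_j}}e^{-\frac{\alpha_j}{\varepsilon}I(x_j)}\gamma(dx_j)$, so the joint density against $\gamma^{\otimes K}$ is proportional to $e^{-\frac{1}{\varepsilon}\sum_j \alpha_j I(x_j)}$ up to the normalizing constant $\prod_j Z^{\varepsilon/\alpha_j}$. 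Then
\[
\mathbb{E}\left[\hat{\eta}^{\varepsilon}(\mathbf{X}^{\varepsilon};\boldsymbol{\alpha})\right]=\frac{1}{\prod_{j}Z^{\varepsilon/\alpha_j}}\int_{\mathcal{X}^K}\sum_{\sigma\in\Sigma_K}\rho^{\varepsilon}(\mathbf{x}_\sigma;\boldsymbol{\alpha})\,e^{-\frac{1}{\varepsilon}F(x_{\sigma(1)})}\,e^{-\frac{1}{\varepsilon}\sum_{j}\alpha_j I(x_j)}\,\prod_{j}\gamma(dx_j).
\]

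The key algebraic step is to observe that $\rho^{\varepsilon}(\mathbf{x}_\sigma;\boldsymbol{\alpha})\cdot e^{-\frac{1}{\varepsilon}\sum_j \alpha_j I(x_j)}$ simplifies. Writing out $\rho^{\varepsilon}(\mathbf{x}_\sigma;\boldsymbol{\alpha}) = e^{-\frac{1}{\varepsilon}\sum_j \alpha_j I(x_{\sigma(j)})}\big/\sum_{\tau\in\Sigma_K}e^{-\frac{1}{\varepsilon}\sum_j \alpha_j I(x_{\tau(j)})}$, and noting that $\sum_j \alpha_j I(x_j)$ appears in the denominator sum (at $\tau=\mathrm{id}$) while the relevant term involves a relabeling, I would change variables in each summand of the outer sum. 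Specifically, for fixed $\sigma$, substitute $y_j = x_{\sigma(j)}$; since $\gamma^{\otimes K}$ is permutation-invariant and the denominator of $\rho^{\varepsilon}$ is a symmetric function of $(x_1,\ldots,x_K)$, this change of variables turns the $\sigma$-th summand into
\[
\int_{\mathcal{X}^K}\frac{e^{-\frac{1}{\varepsilon}\sum_j \alpha_j I(y_j)}\,e^{-\frac{1}{\varepsilon}F(y_1)}\,e^{-\frac{1}{\varepsilon}\sum_j \alpha_j I(y_j)}}{\sum_{\tau}e^{-\frac{1}{\varepsilon}\sum_j \alpha_j I(y_{\tau(j)})}}\,\prod_j\gamma(dy_j),
\]
wait — more carefully, after substitution the weighting density $e^{-\frac{1}{\varepsilon}\sum_j \alpha_j I(x_j)}$ becomes $e^{-\frac{1}{\varepsilon}\sum_j \alpha_j I(y_{\sigma^{-1}(j)})}$, which is just another permutation of the exponents appearing in the denominator. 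The upshot is that summing over all $\sigma$ exactly reconstitutes the denominator, so $\sum_\sigma$ (weight $\times$ joint density) collapses to $e^{-\frac{1}{\varepsilon}F(y_1)}\cdot e^{-\frac{1}{\varepsilon}\alpha_1 I(y_1)}\cdot e^{-\frac{1}{\varepsilon}\sum_{j\geq 2}\alpha_j I(y_j)}$ integrated against $\gamma^{\otimes K}$ — i.e., the cross terms cancel and one is left with an integral that factors.

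After the cancellation, the integral factors as $\left(\int_{\mathcal{X}}e^{-\frac{1}{\varepsilon}F(y_1)}e^{-\frac{1}{\varepsilon}I(y_1)}\gamma(dy_1)\right)\cdot\prod_{j=2}^K\left(\int_{\mathcal{X}}e^{-\frac{\alpha_j}{\varepsilon}I(y_j)}\gamma(dy_j)\right) = \left(\int_{\mathcal{X}}e^{-\frac{1}{\varepsilon}F(y)}e^{-\frac{1}{\varepsilon}I(y)}\gamma(dy)\right)\prod_{j=2}^K Z^{\varepsilon/\alpha_j}$, using $\alpha_1=1$. Dividing by the normalizing constant $\prod_j Z^{\varepsilon/\alpha_j}$ leaves $\frac{1}{Z^{\varepsilon}}\int_{\mathcal{X}}e^{-\frac{1}{\varepsilon}F(y)}e^{-\frac{1}{\varepsilon}I(y)}\gamma(dy)=\int_{\mathcal{X}}e^{-\frac{1}{\varepsilon}F(y)}\mu^{\varepsilon}(dy)$, which is the desired quantity. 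The main point requiring care — the \emph{only} real obstacle — is bookkeeping the permutation substitution correctly: one must verify that as $\sigma$ ranges over $\Sigma_K$, the product of the numerator of $\rho^{\varepsilon}(\mathbf{x}_\sigma;\boldsymbol{\alpha})$ with the joint sampling density yields, after the change of variables $y=x_\sigma$, a summand whose sum over $\sigma$ exactly equals the (symmetric) denominator times the factored integrand, so that the ratio telescopes. An integrability remark (finiteness of $\int e^{-F/\varepsilon}\mu^\varepsilon(dx)$, which is assumed implicitly since we are estimating it) justifies Fubini and the interchange of sum and integral.
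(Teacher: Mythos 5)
Your proposal is correct and follows essentially the same route as the paper's proof: write the expectation against the product density $e^{-\frac{1}{\varepsilon}\sum_j\alpha_j I(x_j)}\gamma^K(d\mathbf{x})$, perform the relabeling $y_j=x_{\sigma(j)}$ in each summand (using symmetry of the denominator of $\rho^{\varepsilon}$ and permutation invariance of $\gamma^K$), and observe that summing the permuted densities over $\sigma$ reproduces the denominator, after which the integral factors and the normalizing constants cancel. Your mid-argument self-correction lands on exactly the paper's computation, so no gap remains.
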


\begin{proof}
It follows from the definitions that
\begin{align*}
  E\left(  \hat{\eta}^{\varepsilon}\left(  \mathbf{X}^{\varepsilon
};\boldsymbol{\alpha}\right)  \right)  
&  =\frac{1}{Z^{\varepsilon;\boldsymbol{\alpha}}}\int_{\mathcal{X}^{K}%
}\sum\nolimits_{\sigma\in\Sigma_{K}}\rho^{\varepsilon}\left(  \mathbf{x}%
_{\sigma};\boldsymbol{\alpha}\right)  e^{-\frac{1}{\varepsilon}F\left(
x_{\sigma\left(  1\right)  }\right)  }e^{-\frac{1}{\varepsilon}\sum_{j=1}%
^{K}\alpha_{j}I\left(  x_{j}\right)  }\gamma^{K}\left(  d\mathbf{x}\right)  \\
&  =\sum_{\sigma\in\Sigma_{K}}\frac{1}{Z^{\varepsilon;\boldsymbol{\alpha
}}}\int_{\mathcal{X}^{K}}\frac{e^{-\frac{1}{\varepsilon}\sum_{j=1}^{K}%
\alpha_{j}I\left(  x_{_{\sigma\left(  j\right)  }}\right)  }e^{-\frac
{1}{\varepsilon}F\left(  x_{\sigma\left(  1\right)  }\right)  }}{\sum\nolimits_{\tau
\in\Sigma_{K}}e^{-\frac{1}{\varepsilon}\sum_{j=1}^{K}\alpha_{j}I\left(
x_{\tau\left(  j\right)  }\right)  }}e^{-\frac{1}{\varepsilon}\sum_{j=1}%
^{K}\alpha_{j}I\left(  x_{j}\right)  }\gamma^{K}\left(  d\mathbf{x}\right)  ,
\end{align*}
where%
\[
\gamma^{K}\left(  d\mathbf{x}\right)  \doteq\gamma\left(  dx_{1}\right)
\cdots\gamma\left(  dx_{K}\right)  \text{ and }Z^{\varepsilon
;\boldsymbol{\alpha}}\doteq Z^{\varepsilon}Z^{\varepsilon/\alpha_{1}}\cdots
Z^{\varepsilon/\alpha_{K}.}%
\]
For each $\sigma$ the change of variables from $(x_{\sigma\left(  1\right)
},\ldots,x_{\sigma\left(  K\right)  })$ to $\left(  y_{1},\ldots,y_{K}\right)
$ gives
\begin{align*}
&  \int_{\mathcal{X}^{K}}\frac{e^{-\frac{1}{\varepsilon}\sum_{j=1}^{K}%
\alpha_{j}I\left(  x_{_{\sigma\left(  j\right)  }}\right)  }e^{-\frac
{1}{\varepsilon}F\left(  x_{\sigma\left(  1\right)  }\right)  }}{\sum_{\tau
\in\Sigma_{K}}e^{-\frac{1}{\varepsilon}\sum_{j=1}^{K}\alpha_{j}I\left(
x_{\tau\left(  j\right)  }\right)  }}e^{-\frac{1}{\varepsilon}\sum_{j=1}%
^{K}\alpha_{j}I\left(  x_{j}\right)  }\gamma^{K}\left(  d\mathbf{x}\right)  \\
&  \quad=\int_{\mathcal{X}^{K}}\frac{e^{-\frac{1}{\varepsilon}\sum_{j=1}%
^{K}\alpha_{j}I\left(  y_{j}\right)  }e^{-\frac{1}{\varepsilon}F\left(
y_{1}\right)  }}{\sum_{\tau\in\Sigma_{K}}e^{-\frac{1}{\varepsilon}\sum
_{j=1}^{K}\alpha_{j}I\left(  y_{\tau^{-1}\left(  j\right)  }\right)  }%
}e^{-\frac{1}{\varepsilon}\sum_{j=1}^{K}\alpha_{j}I\left(  y_{\sigma
^{-1}\left(  j\right)  }\right)  }\gamma^{K}\left(  d\mathbf{y}\right)  .
\end{align*}
Thus%
\begin{align*}
  E\left(  \hat{\eta}^{\varepsilon}\left(  \mathbf{X}^{\varepsilon
};\boldsymbol{\alpha}\right)  \right)  
&  =\sum_{\sigma\in\Sigma_{K}}\int_{\mathcal{X}^{K}}\frac{e^{-\frac
{1}{\varepsilon}\sum_{j=1}^{K}\alpha_{j}I\left(  y_{j}\right)  }e^{-\frac
{1}{\varepsilon}F\left(  y_{1}\right)  }}{\sum_{\tau\in\Sigma_{K}}e^{-\frac
{1}{\varepsilon}\sum_{j=1}^{K}\alpha_{j}I\left(  y_{\tau^{-1}\left(  j\right)
}\right)  }}\frac{e^{-\frac{1}{\varepsilon}\sum_{j=1}^{K}\alpha_{j}I\left(
y_{\sigma^{-1}\left(  j\right)  }\right)  }}{Z^{\varepsilon;\boldsymbol{\alpha
}}}\gamma^{K}\left(  d\mathbf{y}\right)  \\
&  =\frac{1}{Z^{\varepsilon;\boldsymbol{\alpha}}}\int_{\mathcal{X}^{K}%
}e^{-\frac{1}{\varepsilon}\sum_{j=1}^{K}\alpha_{j}I\left(  y_{j}\right)
}e^{-\frac{1}{\varepsilon}F\left(  y_{1}\right)  }\gamma^{K}\left(
d\mathbf{y}\right)  \\
&  =\int_{\mathcal{X}}e^{-\frac{1}{\varepsilon}F^{\varepsilon}(x)}%
\frac{1}{Z^{\varepsilon}}e^{-\frac{1}{\varepsilon}I(x)}\gamma\left(
dx\right)  .
\end{align*}

\end{proof}

\medskip A useful observation is that if $\mathbf{Y}^{\varepsilon}$ is sampled
from the symmetrized version of the distribution of $\mathbf{X}^{\varepsilon}%
$, i.e., if \textsl{\ }%
\[
\mathbf{Y}^{\varepsilon}\sim\frac{\sum_{\sigma\in\Sigma_{K}}e^{-\frac
{1}{\varepsilon}\sum_{j=1}^{K}\alpha_{j}I\left(  y_{\sigma\left(  j\right)
}\right)  }}{K!Z^{\varepsilon;\boldsymbol{\alpha}}}\gamma^{K}\left(
d\mathbf{y}\right)  ,
\]
then with $\overset{d}{=}$ meaning that two random variables have the same
distribution,
\[
\hat{\eta}^{\varepsilon}\left(  \mathbf{X}^{\varepsilon};\boldsymbol{\alpha
}\right)  \overset{d}{=}\hat{\eta}^{\varepsilon}\left(  \mathbf{Y}%
^{\varepsilon};\boldsymbol{\alpha}\right)  .
\]
Indeed, for any $t\in\mathbb{R}$%
\begin{align*}
 E\left(  e^{it\hat{\eta}^{\varepsilon}\left(  \mathbf{Y}^{\varepsilon
};\boldsymbol{\alpha}\right)  }\right)  
&  =\int_{\mathcal{X}^{K}}e^{it\hat{\eta}^{\varepsilon}\left(  y_{1,\ldots
,}y_{K};\boldsymbol{\alpha}\right)  }\frac{\sum_{\sigma\in\Sigma_{K}}%
e^{-\frac{1}{\varepsilon}\sum_{j=1}^{K}\alpha_{j}I\left(  y_{\sigma\left(
j\right)  }\right)  }}{K!Z^{\varepsilon;\boldsymbol{\alpha}}}\gamma^{K}\left(
d\mathbf{y}\right)  \\
&  =\frac{1}{K!Z^{\varepsilon;\boldsymbol{\alpha}}}\sum\nolimits_{\sigma
\in\Sigma_{K}}\int_{\mathcal{X}^{K}}e^{it\hat{\eta}^{\varepsilon}\left(
y_{1,\ldots,}y_{K};\boldsymbol{\alpha}\right)  }e^{-\frac{1}{\varepsilon}%
\sum_{j=1}^{K}\alpha_{j}I\left(  y_{j}\right)  }\gamma^{K}\left(
d\mathbf{y}\right)  \\
&  =\frac{1}{Z^{\varepsilon;\boldsymbol{\alpha}}}\int_{\mathcal{X}^{K}%
}e^{it\hat{\eta}^{\varepsilon}\left(  y_{1,\ldots,}y_{K};\boldsymbol{\alpha
}\right)  }e^{-\frac{1}{\varepsilon}\sum_{j=1}^{K}\alpha_{j}I\left(
y_{j}\right)  }\gamma^{K}\left(  d\mathbf{y}\right)  \\
&  =E\left(  e^{it\hat{\eta}^{\varepsilon}\left(  \mathbf{X}^{\varepsilon
};\boldsymbol{\alpha}\right)  }\right)  ,
\end{align*}
where the fact that $\hat{\eta}^{\varepsilon}\left(  \mathbf{x}%
;\boldsymbol{\alpha}\right)  $ is invariant under permutations of $\mathbf{x}$
is used for the second equality.

\begin{remark}
For implementation purposes one prefers $\mathbf{X}^{\varepsilon}$\ over
$\mathbf{Y}^{\varepsilon}$, since it is easier to generate samples of
$\mathbf{X}^{\varepsilon}$\ than $\mathbf{Y}^{\varepsilon}$. However, we
introduce $\mathbf{Y}^{\varepsilon}$\ because it is directly analogous to the
quantity actually sampled in the dynamical setting which inspired the present
static form of INS, and also due to the fact that it will simplify proofs
later on. Specifically, we will use that $\left\{  \mathbf{Y}^{\varepsilon
}\right\}  _{\varepsilon\in(0,1)}$\ satisfies the LDP with rate function
$\min_{\tau\in\Sigma_{K}}\{\sum_{j=1}^{K}\alpha_{j}I\left(  x_{\tau\left(
j\right)  }\right)  \}$\ (see Lemma \ref{Lem4}). This rate function gives a
simpler expression for the decay rate of $E(\hat{\eta}^{\varepsilon}\left(
\mathbf{Y}^{\varepsilon};\boldsymbol{\alpha}\right)  )^{2}$, which in turn
allows a simpler proof of our main result concerning the decay rate of
$E(\hat{\eta}^{\varepsilon}\left(  \mathbf{X}^{\varepsilon};\boldsymbol{\alpha
}\right)  )^{2}$.
\end{remark}

\section{Analysis of Performance}

\label{sec:performance}

In this section we consider the performance of the INS estimator. Since we
assume that one can generate independent samples $\left\{  X^{\varepsilon
}\left(  i\right)  \right\}  _{i\in\mathbb{N}}$ from the target measure
$\mu^{\varepsilon}$, a possible estimator for (\ref{eqn:setAprob}) is the
straightforward Monte Carlo scheme
\[
\hat{\theta}_{\mbox{\tiny{MC}}}^{\varepsilon}(N)=\frac{1}{N}\sum
\nolimits_{i=1}^{N}1_{\left\{  X^{\varepsilon}\left(  i\right)  \in A\right\}
}.
\]
The estimator $\hat{\theta}_{\mbox{\tiny{MC}}}^{\varepsilon}(N)$ is unbiased
and by the law of large numbers converges to $\mu^{\varepsilon}(A)$ almost
surely as $N\rightarrow\infty$. However, as is well known its relative error
can make it impractical, since
\[
\frac{\sqrt{\text{Var}(\hat{\theta}_{\mbox{\tiny{MC}}}^{\varepsilon}(N))}%
}{E[\hat{\theta}_{\mbox{\tiny{MC}}}^{\varepsilon}(N)]}=\frac{\sqrt
{\mu^{\varepsilon}(A)(1-\mu^{\varepsilon}(A))}}{\sqrt{N}\mu^{\varepsilon}%
(A)}\approx\frac{1}{\sqrt{N\mu^{\varepsilon}(A)}}.
\]
For small $\varepsilon$, under mild conditions on $\partial A$ we have the
Laplace type approximation $\mu^{\varepsilon}(A)\approx e^{-\frac
{1}{\varepsilon}\inf_{x\in A}I(x)}$. Thus to maintain a bounded relative error
one needs exponentially many samples $N$.

As an alternative one can consider any estimator of the form
\begin{equation}
\frac{1}{N}\sum\nolimits_{i=1}^{N}R_{i}^{\varepsilon}, \label{eqn:genest}%
\end{equation}
with the $\left\{  R_{i}^{\varepsilon}\right\}  $ independent and identically
distributed (iid) and $ER_{1}^{\varepsilon}=\mu^{\varepsilon}(A)$ so the
estimator is unbiased. Since the variance of (\ref{eqn:genest}) is
proportional to that of $R_{i}^{\varepsilon}$, it is enough to consider the
single sample estimator $R^{\varepsilon}$ when comparing performance. Also,
since all estimators under consideration are unbiased, comparing variances
reduces to comparing second moments. Similar considerations apply when
considering risk-sensitive expected values, for which probabilities can be
considered a special case by taking $F(x)=\infty \cdot 1_{A^{c}}(x)$, where
$1_{A^{c}}$ is the indicator of the complement of $A$.

Suppose that $\hat{\eta}^{\varepsilon}$ is a single sample estimator of
$\int_{\mathcal{X}}e^{-\frac{1}{\varepsilon}F(x)}\mu^{\varepsilon}(dx)$, with
$F$ bounded below and continuous. By standard Laplace asymptotics
\cite[Theorem 1.3.4]{dupell4}
\[
I(F)\doteq\inf_{x\in\mathcal{X}%
}\left[  F(x)+I(x)\right]=\lim_{\varepsilon\rightarrow0}-\varepsilon\log\int_{\mathcal{X}}e^{-\frac
{1}{\varepsilon}F(x)}\mu^{\varepsilon}(dx),
\]
and the expected value is largely determined by tail properties of
$\mu^{\varepsilon}$ if $I(x^{\ast})>0$ for all $x^{\ast}$ that minimize in
$I(F)$. When this holds, one way of assessing the performance of $\hat{\eta
}^{\varepsilon}$ is to obtain bounds on the limit of $-\varepsilon\log
E[(\hat{\eta}^{\varepsilon})^{2}]$ (i.e., the decay rate), or at least lower
bounds on
\[
\liminf_{\varepsilon\rightarrow0}-\varepsilon\log E[(\hat{\eta}^{\varepsilon
})^{2}],
\]
with larger lower bounds indicating better performance. As is well known there
is a limit to how well an estimator can perform. By Jensen's inequality%
\[
\liminf_{\varepsilon\rightarrow0}-\varepsilon\log E[(\hat{\eta}^{\varepsilon
})^{2}]\leqslant\liminf_{\varepsilon\rightarrow0}-2\varepsilon\log
\int_{\mathcal{X}}e^{-\frac{1}{\varepsilon}F(x)}\mu^{\varepsilon}(dx)=2I(F).
\]
An unbiased estimator $\hat{\eta}^{\varepsilon}$ is said to be
\textit{asymptotically optimal} if the upper bound is achieved, i.e.,
\[
\lim_{\varepsilon\rightarrow0}-\varepsilon\log E[(\hat{\eta}^{\varepsilon
})^{2}]=2I(F).
\]

As a heuristic motivation for the INS estimator, consider the problem of
approximating%
\[
P\left(  X^{\varepsilon}\in A\right)  \approx e^{-\frac{1}{\varepsilon
}I\left(  a\right)  },
\]
where $0$\ is the unique minimizer of $I$\ and $0\notin A=[a,\infty)$, $a$\ is
the minimizer of $I$\ over $[a,\infty)$, and where $\approx$\ indicates that
the left and right hand sides have the same rate of decay as $\varepsilon
\rightarrow0$. Consider the case of 2 temperatures. With $\alpha_{1}=1$\ and
$\alpha_{2}=\alpha$
\[
\hat{\theta}_{\text{INS}}^{\varepsilon}=\rho^{\varepsilon}\left(
X_{1}^{\varepsilon},X_{2}^{\varepsilon}\right)  1_{A}\left(  X_{1}%
^{\varepsilon}\right)  +\rho^{\varepsilon}\left(  X_{2}^{\varepsilon}%
,X_{1}^{\varepsilon}\right)  1_{A}\left(  X_{2}^{\varepsilon}\right)  ,
\]
where\emph{ }%
\[
X_{1}^{\varepsilon}\sim\frac{1}{Z^{\varepsilon}}e^{-\frac{1}{\varepsilon
}I\left(  x\right)  }dx\text{ and }X_{2}^{\varepsilon}\sim\frac{1}%
{Z^{\varepsilon/\alpha}}e^{-\frac{\alpha}{\varepsilon}I\left(  x\right)  }dx
\]
and
\[
\rho^{\varepsilon}\left(  x,y\right)  =\frac{e^{-\frac{1}{\varepsilon}\left(
I\left(  x\right)  +\alpha I\left(  y\right)  \right)  }}{e^{-\frac
{1}{\varepsilon}\left(  I\left(  x\right)  +\alpha I\left(  y\right)  \right)
}+e^{-\frac{1}{\varepsilon}\left(  I\left(  y\right)  +\alpha I\left(
x\right)  \right)  }}.
\]
Thus for small $\varepsilon>0$, there are three types of outcomes:

\begin{itemize}
\item $\hat{\theta}_{\text{INS}}^{\varepsilon}=1$\ when $\left(
X_{1}^{\varepsilon},X_{2}^{\varepsilon}\right)  \in A\times A,$\ which occurs
with approximate probability $P\left(  \left(  X_{1}^{\varepsilon}%
,X_{2}^{\varepsilon}\right)  \in A\times A\right)  \approx e^{-\frac
{1}{\varepsilon}I\left(  a\right)  }\cdot e^{-\frac{1}{\varepsilon}\alpha
I\left(  a\right)  }=e^{-\frac{1}{\varepsilon}\left(  1+\alpha\right)
I\left(  a\right)  }$.

\item $\hat{\theta}_{\text{INS}}^{\varepsilon}=0$\ when $\left(
X_{1}^{\varepsilon},X_{2}^{\varepsilon}\right)  \in A^{c}\times A^{c},$\ with
approximate probability $P\left(  \left(  X_{1}^{\varepsilon},X_{2}%
^{\varepsilon}\right)  \in A^{c}\times A^{c}\right)  \approx1$.

\item $\hat{\theta}_{\text{INS}}^{\varepsilon}\approx\rho^{\varepsilon}\left(
a,0\right)  $\ when $\left(  X_{1}^{\varepsilon},X_{2}^{\varepsilon}\right)
\in A\times A^{c}$\ or $A^{c}\times A$, with approximate probability\emph{ }%
\begin{equation}
P\left(  \left(  X_{1}^{\varepsilon},X_{2}^{\varepsilon}\right)  \in
A\times A^{c}\right)  +P\left(  \left(  X_{1}^{\varepsilon},X_{2}%
^{\varepsilon}\right)  \in A^{c}\times A\right) 
  \approx e^{-\frac{1}{\varepsilon}I\left(  a\right)  }+e^{-\frac
{1}{\varepsilon}\alpha I\left(  a\right)  }\approx e^{-\frac{1}{\varepsilon
}\alpha I\left(  a\right)  }. \label{eqn:heuristic1}
\end{equation}

\end{itemize}

Using the definition of $\rho^{\varepsilon}$\ gives
\[
\rho^{\varepsilon}\left(  a,0\right)  =\frac{e^{-\frac{1}{\varepsilon}I\left(
a\right)  }}{e^{-\frac{1}{\varepsilon}I\left(  a\right)  }+e^{-\frac
{1}{\varepsilon}\alpha I\left(  a\right)  }}\approx e^{-\frac{1}{\varepsilon
}\left(  1-\alpha\right)  I\left(  a\right)  },
\]
and therefore%
\begin{align*}
E(\hat{\theta}_{\text{INS}}^{\varepsilon})^{2}  &  \approx1^{2}\cdot
e^{-\frac{1}{\varepsilon}\left(  1+\alpha\right)  I\left(  a\right)  }+\left(
\rho^{\varepsilon}\left(  a,0\right)  \right)  ^{2}\cdot e^{-\frac
{1}{\varepsilon}\alpha I\left(  a\right)  }\\
&  \approx e^{-\frac{1}{\varepsilon}\left(  1+\alpha\right)  I\left(
a\right)  }+e^{-\frac{1}{\varepsilon}\left(  2-\alpha\right)  I\left(
a\right)  }\\
&  \approx e^{-\frac{1}{\varepsilon}\left[  \left(  1+\alpha\right)
\wedge\left(  2-\alpha\right)  \right]  I\left(  a\right)  }.
\end{align*}
Thus
\[
\lim_{\varepsilon\rightarrow0}-\varepsilon\log E(\hat{\theta}_{\text{INS}%
}^{\varepsilon})^{2}=\left(  \left[  \left(  1+\alpha\right)  \wedge\left(
2-\alpha\right)  \right]  \right)  I\left(  a\right)  ,
\]
with the maximal rate of decay of $3I(a)/2$\ at $\alpha=1/2$, which can be
compared with the rate of decay $I(a)$\ that would be found for standard Monte Carlo.

\subsection{Approximating a risk sensitive functional}

In this section we evaluate the decay rate for the INS estimator and the
optimal $\boldsymbol{\alpha}$ for a fixed number of temperatures. The first step is to define
the function $V_{F}(\boldsymbol{\alpha})$ that characterizes the decay rate
for a fixed parameter $\boldsymbol{\alpha}$ and solve for the optimum over
$\boldsymbol{\alpha}$. The proof of the following lemma will be given after
stating and proving the asymptotic bounds.

\begin{lemma}
\label{Lem2}Let $F:\mathcal{X}\rightarrow\mathbb{R}$ be continuous and bounded
below. Let $I:\mathcal{X}\rightarrow\lbrack0,\infty]$ be continuous on its
domain of finiteness. For fixed $K\in\mathbb{%
\mathbb{N}
}$ and $\boldsymbol{\alpha}$\textbf{\ }with $1=\alpha_{1}\geq\alpha_{2}%
\geq\cdots\geq\alpha_{K}\geq0$, let $D$ $\doteq\left\{  \left(  I\left(
x\right)  ,F\left(  x\right)  \right)  :x\in\mathcal{X}\right\}  $ and define
\[
V_{F}(\boldsymbol{\alpha})\doteq\inf_{\substack{\left(  I_{1},F\right)  \in
D\\\left\{  \boldsymbol{I}:I_{j}\in\lbrack0,I_{1}]\text{ for }%
j\geq2\right\}  }}\left[  2\sum\nolimits_{j=1}^{K}\alpha_{j}I_{j}+2F-\min_{\sigma
\in\Sigma_{K}}\left\{  \sum\nolimits_{j=1}^{K}\alpha_{j}I_{\sigma\left(  j\right)
}\right\}  \right]  . 
\]
Then%
\[
\inf_{\mathbf{x\in}\mathcal{X}^{K}}\left[  2\sum\nolimits_{j=1}^{K}\alpha_{j}I\left(
x_{_{j}}\right)  +2F\left(  x_{1}\right)  -\min_{\sigma\in\Sigma_{K}}\left\{
\sum\nolimits_{j=1}^{K}\alpha_{j}I\left(  x_{\sigma\left(  j\right)  }\right)
\right\}  \right]  =V_{F}(\boldsymbol{\alpha}).
\]
Moreover%
\[
\sup_{\boldsymbol{\alpha}}V_{F}\left(  \boldsymbol{\alpha}\right)
=\inf_{x\mathbf{\in}\mathcal{X}}\left\{  \left(  2-\left(  1/2\right)
^{K-1}\right)  I\left(  x\right)  +2F\left(  x\right)  \right\}  ,
\]
and $\boldsymbol{\alpha}^{\ast}=(1,1/2,1/4,\ldots,\left(  1/2\right)  ^{K-1})$
achieves the supremum.
\end{lemma}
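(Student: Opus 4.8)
The plan is to prove the three assertions in two stages. In the first stage I would establish the identity
\[
\inf_{\mathbf{x}\in\mathcal{X}^{K}}\Big[2\sum\nolimits_{j=1}^{K}\alpha_{j}I(x_{j})+2F(x_{1})-\min_{\sigma\in\Sigma_{K}}\Big\{\sum\nolimits_{j=1}^{K}\alpha_{j}I(x_{\sigma(j)})\Big\}\Big]=V_{F}(\boldsymbol{\alpha}),
\]
extracting along the way the closed form
\[
V_{F}(\boldsymbol{\alpha})=\inf_{x\in\mathcal{X}}\left\{ c(\boldsymbol{\alpha})I(x)+2F(x)\right\} ,\qquad c(\boldsymbol{\alpha})\doteq2+\min_{1\leq k\leq K}\Big(\sum\nolimits_{j=k+1}^{K}\alpha_{j}-\alpha_{k}\Big),
\]
and in the second stage I would carry out the finite-dimensional optimization $\max_{\boldsymbol{\alpha}}c(\boldsymbol{\alpha})=2-(1/2)^{K-1}$. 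Two reductions drive the first stage. Since $\alpha_{1}\geq\cdots\geq\alpha_{K}$, the rearrangement inequality identifies $\min_{\sigma}\sum_{j}\alpha_{j}I(x_{\sigma(j)})$ with $\sum_{j}\alpha_{j}$ paired against the values $I(x_{1}),\dots,I(x_{K})$ listed in increasing order (and likewise inside the definition of $V_{F}$); and, again by rearrangement, relabelling $x_{2},\dots,x_{K}$ so that $I(x_{2})\leq\cdots\leq I(x_{K})$ never increases the bracketed objective, so on the left it suffices to consider such $\mathbf{x}$, while the infimum defining $V_{F}$ is unchanged if $\boldsymbol{I}$ is restricted to the order simplex $\{0\leq I_{2}\leq\cdots\leq I_{K}\leq I_{1}\}$.

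For the inequality ``$\geq$'' I would take $\mathbf{x}$ with $I(x_{2})\leq\cdots\leq I(x_{K})$ and let $r$ be the position of $I(x_{1})$ in the sorted list of all $K$ values, so the sorted list reads $I(x_{2}),\dots,I(x_{r}),I(x_{1}),I(x_{r+1}),\dots,I(x_{K})$. Substituting this order into the objective and collecting coefficients gives
\[
2\sum\nolimits_{j=1}^{K}\alpha_{j}I(x_{j})+2F(x_{1})-\min_{\sigma}\{\cdots\}=(2-\alpha_{r})I(x_{1})+2F(x_{1})+\sum\nolimits_{j=2}^{r}(2\alpha_{j}-\alpha_{j-1})I(x_{j})+\sum\nolimits_{j=r+1}^{K}\alpha_{j}I(x_{j}).
\]
Because $I(x_{j})\geq I(x_{1})$ and $\alpha_{j}\geq0$ for $j>r$, replacing the last sum by $\big(\sum_{j>r}\alpha_{j}\big)I(x_{1})$ only lowers the right-hand side, and the resulting lower bound is precisely the expression defining $V_{F}$ evaluated at $I_{1}=I(x_{1})$, $F=F(x_{1})$, $I_{j}=I(x_{j})$ for $2\leq j\leq r$, and $I_{j}=I_{1}$ for $j>r$ — an admissible point, all these $I_{j}$ lying in $[0,I_{1}]$ — hence the objective is $\geq V_{F}(\boldsymbol{\alpha})$. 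For ``$\leq$'' I fix $(I_{1},F)\in D$; on the order simplex the $V_{F}$-objective is affine in $\boldsymbol{I}$ (the sorted order of $(I_{1},\dots,I_{K})$ being frozen), so its infimum is attained at a vertex, i.e.\ at a point with $I_{2}=\cdots=I_{m}=0$ and $I_{m+1}=\cdots=I_{K}=I_{1}$ for some $m$. Since $\min_{x}I(x)=0$ (as $I$ is the rate function of a family of probability measures), such a configuration is realized by taking each $x_{j}$ equal to $x_{1}$ or to a minimizer $x_{\ast}$ of $I$, and evaluating the bracketed objective there shows the left-hand infimum is $\leq$ the $V_{F}$-value for this $(I_{1},F)$; taking the infimum over $D$ completes the identity. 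Evaluating the vertex values and minimizing over the vertex, after a telescoping in which the $\alpha_{K}$-terms cancel, yields the displayed closed form $V_{F}(\boldsymbol{\alpha})=\inf_{x}\{c(\boldsymbol{\alpha})I(x)+2F(x)\}$.

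For the second stage, observe that $I\geq0$, so for each fixed $x$ the map $c\mapsto cI(x)+2F(x)$ is nondecreasing while $c(\boldsymbol{\alpha})$ does not depend on $x$; hence $\sup_{\boldsymbol{\alpha}}V_{F}(\boldsymbol{\alpha})=\inf_{x}\{c^{\ast}I(x)+2F(x)\}$ with $c^{\ast}\doteq\max_{\boldsymbol{\alpha}}c(\boldsymbol{\alpha})$, the maximum over the compact set $\{1=\alpha_{1}\geq\cdots\geq\alpha_{K}\geq0\}$ being attained. It then remains to compute $c^{\ast}$. Substituting $\alpha_{k}^{\ast}=(1/2)^{k-1}$ and summing the geometric series gives $\sum_{j=k+1}^{K}\alpha_{j}^{\ast}-\alpha_{k}^{\ast}=-(1/2)^{K-1}$ for every $k$, so $c(\boldsymbol{\alpha}^{\ast})=2-(1/2)^{K-1}$ and $\boldsymbol{\alpha}^{\ast}$ is admissible. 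For the matching upper bound, suppose $\sum_{j>k}\alpha_{j}-\alpha_{k}\geq t$ for all $k$; with $T_{k}\doteq\alpha_{k}+\cdots+\alpha_{K}$ and $T_{K+1}\doteq0$ this reads $T_{k}-t\leq2(T_{k+1}-t)$, which iterated downward from $T_{K+1}=0$ gives $T_{2}-t\leq-2^{K-1}t$, while the case $k=1$ together with $\alpha_{1}=1$ gives $T_{2}-t\geq1$; hence $1\leq-2^{K-1}t$, i.e.\ $t\leq-(1/2)^{K-1}$. Thus $c^{\ast}=2-(1/2)^{K-1}$, attained at $\boldsymbol{\alpha}^{\ast}$, which is the asserted formula.

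The step I expect to require the most care is the inequality ``$\geq$'' of the first identity: one must handle arbitrary $\mathbf{x}$, in particular configurations where $I(x_{j})>I(x_{1})$ for some $j\geq2$, and check that the natural lower bound still lands in the admissible region $\{I_{j}\in[0,I_{1}]\}$ of $V_{F}$, which is where the bookkeeping with the rank $r$ and the rearranged permutation must be done precisely. By contrast the supremum computation is a clean finite-dimensional optimization once the formula for $c(\boldsymbol{\alpha})$ is in hand, the only delicate point being the interchange of $\sup_{\boldsymbol{\alpha}}$ and $\inf_{x}$, which is legitimate precisely because $c(\boldsymbol{\alpha})$ is independent of $x$.
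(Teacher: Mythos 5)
Your proof is correct, and it takes a genuinely different route from the paper's. For the first identity, the paper partitions $\mathcal{X}^{K}$ into the sorted sectors $O_{\tau}$, collapses the coordinates whose rate exceeds $I(x_{1})$ using nonnegativity of the partial sums of the $\beta_{j}$'s, and then matches the result with a parallel decomposition of $V_{F}$; you instead prove the two inequalities directly: the rank-$r$ computation gives ``$\geq$'' (your tail carries the coefficients $\alpha_{j}\geq 0$ directly, which is slightly cleaner than the partial-sum argument), and ``$\leq$'' follows because the $V_{F}$-objective is affine on the order simplex, hence minimized at a vertex whose coordinates lie in $\{0,I_{1}\}$ and is therefore realizable by copies of $x_{1}$ and a minimizer of $I$. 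For the optimization over $\boldsymbol{\alpha}$, the paper works from the rewritings (\ref{eqn:Vaexp})--(\ref{eqn:Vaexp2}) and verifies a saddle point at $(\boldsymbol{\alpha}^{\ast},\mathbf{I}^{\ast})$, while you first extract the closed form $V_{F}(\boldsymbol{\alpha})=\inf_{x}\{c(\boldsymbol{\alpha})I(x)+2F(x)\}$ with $c(\boldsymbol{\alpha})=2+\min_{1\leq k\leq K}\bigl(\sum_{j=k+1}^{K}\alpha_{j}-\alpha_{k}\bigr)$ (your vertex values $\bigl(2+\sum_{j>m}\alpha_{j}-\alpha_{m}\bigr)I_{1}+2F$ check out, and for $K=2$ the formula reproduces the paper's heuristic $(1+\alpha)\wedge(2-\alpha)$), then exploit monotonicity of $V_{F}$ in the scalar $c(\boldsymbol{\alpha})$ (legitimate since $I\geq 0$ and $c$ does not depend on $x$) together with the tail-sum recursion $T_{k}-t\leq 2(T_{k+1}-t)$ to get $c^{\ast}=2-(1/2)^{K-1}$, attained at $\boldsymbol{\alpha}^{\ast}$. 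What this buys: the combinatorial content is isolated in a scalar problem for $c$, the saddle-point verification disappears, and you never need attainment of the infimum over $x$ (which the paper secures by invoking that $I$ is a rate function). One point you should make explicit: your ``$\leq$'' step uses that $I$ vanishes somewhere (or at least $\inf_{\mathcal{X}}I=0$, with an approximation argument), which is not literally among the hypotheses of Lemma \ref{Lem2}, and without which the identity can fail (a one-point space with $I\equiv 1$, $K=2$, $\alpha_{2}>1/2$ is a counterexample); the paper's proof relies on the same fact, hidden in its passage from infima over points of $\mathcal{X}$ to infima over real values ``because $I$ is continuous on the domain of finiteness,'' and it does hold in the intended setting where $I$ is the rate function of an LDP for probability measures, so recording it as a standing assumption closes the issue.
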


When combined with the following result, Lemma \ref{Lem2} shows that
$\hat{\eta}^{\varepsilon}\left(  \mathbf{X}^{\varepsilon};\boldsymbol{\alpha
}\right)  $ reaches optimal decay rate when $\boldsymbol\alpha=\boldsymbol\alpha^{\ast}.$

\begin{theorem}
\label{Thm1}Assume Condition \ref{con:1}, and let $F:\mathcal{X}%
\rightarrow\mathbb{R}$ be continuous and bounded below. Let $I:\mathcal{X}%
\rightarrow\lbrack0,\infty]$ be continuous on its domain of finiteness. Then
for any $K\in%
\mathbb{N}
$ and $\boldsymbol{\alpha}$ with $1=\alpha_{1}\geq\alpha_{2}\geq\cdots
\geq\alpha_{K}\geq0$,
\[
\lim_{\varepsilon\rightarrow0}-\varepsilon\log E\left(  \hat{\eta
}^{\varepsilon}\left(  \mathbf{X}^{\varepsilon};\boldsymbol{\alpha}\right)
\right)  ^{2}=V_{F}\left(  \boldsymbol{\alpha}\right)  .
\]

\end{theorem}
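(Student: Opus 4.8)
The plan is to transfer the computation to the symmetrized sample $\mathbf{Y}^{\varepsilon}$ and then apply standard Laplace asymptotics. First I would use the distributional identity established above, $\hat{\eta}^{\varepsilon}(\mathbf{X}^{\varepsilon};\boldsymbol{\alpha})\overset{d}{=}\hat{\eta}^{\varepsilon}(\mathbf{Y}^{\varepsilon};\boldsymbol{\alpha})$, so that $E(\hat{\eta}^{\varepsilon}(\mathbf{X}^{\varepsilon};\boldsymbol{\alpha}))^{2}=E(\hat{\eta}^{\varepsilon}(\mathbf{Y}^{\varepsilon};\boldsymbol{\alpha}))^{2}$, together with the fact (Lemma \ref{Lem4}) that $\{\mathbf{Y}^{\varepsilon}\}$ satisfies the LDP with rate function $J(\mathbf{y})\doteq\min_{\tau\in\Sigma_{K}}\sum_{j=1}^{K}\alpha_{j}I(y_{\tau(j)})$.

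Second, I would derive sharp pointwise exponential bounds for $\hat{\eta}^{\varepsilon}$. From the definitions,
\[
\hat{\eta}^{\varepsilon}(\mathbf{y};\boldsymbol{\alpha})=\frac{\sum_{\sigma\in\Sigma_{K}}e^{-\frac{1}{\varepsilon}\left(\sum_{j=1}^{K}\alpha_{j}I(y_{\sigma(j)})+F(y_{\sigma(1)})\right)}}{\sum_{\tau\in\Sigma_{K}}e^{-\frac{1}{\varepsilon}\sum_{j=1}^{K}\alpha_{j}I(y_{\tau(j)})}},
\]
and comparing each of the two sums of $K!$ exponentials with its largest term gives
\[
\frac{1}{K!}e^{-\frac{1}{\varepsilon}(N(\mathbf{y})-J(\mathbf{y}))}\leq\hat{\eta}^{\varepsilon}(\mathbf{y};\boldsymbol{\alpha})\leq K!\,e^{-\frac{1}{\varepsilon}(N(\mathbf{y})-J(\mathbf{y}))},\qquad N(\mathbf{y})\doteq\min_{\sigma\in\Sigma_{K}}\left(\sum_{j=1}^{K}\alpha_{j}I(y_{\sigma(j)})+F(y_{\sigma(1)})\right).
\]
Both $N$ and $J$ are invariant under permutations of the coordinates of $\mathbf{y}$, and $N(\mathbf{y})\geq J(\mathbf{y})+\inf_{x}F(x)$, so the exponent $N-J$ is bounded below; moreover it is finite and continuous wherever $I$ is finite in every coordinate, which is the almost sure support of $\mathbf{Y}^{\varepsilon}$.

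Third, since $\hat{\eta}^{\varepsilon}\geq0$ the bounds survive squaring, so $E(\hat{\eta}^{\varepsilon}(\mathbf{Y}^{\varepsilon};\boldsymbol{\alpha}))^{2}$ lies between $(K!)^{-2}$ and $(K!)^{2}$ times $E[e^{-\frac{2}{\varepsilon}(N(\mathbf{Y}^{\varepsilon})-J(\mathbf{Y}^{\varepsilon}))}]$, and the prefactors are irrelevant after applying $-\varepsilon\log(\cdot)$ and letting $\varepsilon\to0$. Using the LDP for $\{\mathbf{Y}^{\varepsilon}\}$ with rate $J$, the Laplace upper bound (obtained by truncating the exponent, which is bounded below) and the Laplace lower bound (obtained by restricting the expectation to a small ball around a near-minimizer, where the exponent is finite and continuous) give
\[
\lim_{\varepsilon\to0}-\varepsilon\log E[e^{-\frac{2}{\varepsilon}(N(\mathbf{Y}^{\varepsilon})-J(\mathbf{Y}^{\varepsilon}))}]=\inf_{\mathbf{y}\in\mathcal{X}^{K}}\left[2(N(\mathbf{y})-J(\mathbf{y}))+J(\mathbf{y})\right]=\inf_{\mathbf{y}\in\mathcal{X}^{K}}\left[2N(\mathbf{y})-J(\mathbf{y})\right].
\]

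Finally, I would rewrite the variational problem: expressing $N(\mathbf{y})$ through the permutation $\sigma$ attaining it and using that $J$ is permutation-invariant, the change of variables $\mathbf{z}=\mathbf{y}_{\sigma}$ yields
\[
\inf_{\mathbf{y}\in\mathcal{X}^{K}}\left[2N(\mathbf{y})-J(\mathbf{y})\right]=\inf_{\mathbf{z}\in\mathcal{X}^{K}}\left[2\sum_{j=1}^{K}\alpha_{j}I(z_{j})+2F(z_{1})-\min_{\sigma\in\Sigma_{K}}\sum_{j=1}^{K}\alpha_{j}I(z_{\sigma(j)})\right],
\]
which is exactly the quantity Lemma \ref{Lem2} identifies with $V_{F}(\boldsymbol{\alpha})$, and chaining the equalities gives the claim. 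I expect the main obstacle to be the justification of the Laplace asymptotics rather than the algebra: the exponent $N-J$ equals $+\infty$ off the effective domain, so one must confirm that the Laplace lower bound still applies, and this is precisely where the continuity of $I$ (hence of $N$ and $J$) on its domain of finiteness is needed, in order to localize the lower bound near a near-minimizer at which the exponent is finite and continuous. The pointwise exponential bounds and the final permutation rearrangement are routine by comparison.
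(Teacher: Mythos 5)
Your proposal is correct, and it reaches the conclusion by a route that is genuinely different from (and somewhat more streamlined than) the paper's. The paper expands $E(\hat{\eta}^{\varepsilon}(\mathbf{Y}^{\varepsilon};\boldsymbol{\alpha}))^{2}$ into the $\left(K!\right)^{2}$ cross terms indexed by $(\sigma,\bar{\sigma})$, bounds each cross term from below via the specially tailored part 2 of Lemma \ref{Lem1} (the estimate for ratios with $(\sum_{\ell}e^{-f_{\ell}/\varepsilon})^{2}$ in the denominator) and from above via the pointwise bound $\rho^{\varepsilon}\geq\frac{1}{K!}e^{-\frac{1}{\varepsilon}[f_{\iota}-\min_{\tau}f_{\tau}]}$ followed by part 3 of Lemma \ref{Lem1}, and then notes that all the resulting infima coincide. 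You instead sandwich the estimator itself, $\frac{1}{K!}e^{-\frac{1}{\varepsilon}(N-J)}\leq\hat{\eta}^{\varepsilon}\leq K!\,e^{-\frac{1}{\varepsilon}(N-J)}$, which is legitimate because both the numerator and the denominator of $\hat{\eta}^{\varepsilon}(\mathbf{y};\boldsymbol{\alpha})$ (the denominator being $\sigma$-independent) are sums of exactly $K!$ nonnegative terms; a single application of the Laplace upper and lower bounds (parts 1 and 3 of Lemma \ref{Lem1} with $B=\mathcal{X}^{K}$) to the exponent $2(N-J)$ then replaces the paper's term-by-term analysis, and part 2 of Lemma \ref{Lem1} is not needed at all. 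Your final permutation change of variables showing $\inf[2N-J]=\inf_{\mathbf{z}}[2\sum_{j}\alpha_{j}I(z_{j})+2F(z_{1})-\min_{\sigma}\sum_{j}\alpha_{j}I(z_{\sigma(j)})]$ is valid and plays the role of the paper's observation that every $\sigma$-indexed infimum takes the same value, after which Lemma \ref{Lem2} identifies the limit as $V_{F}(\boldsymbol{\alpha})$. The one point you flag but should make explicit is the reduction to the set where $I$ is finite: the difference $N-J$ is of the form $\infty-\infty$ off $\{I<\infty\}^{K}$, and the clean way to dispose of this is the paper's Remark \ref{Rmk1} (via Lemma \ref{Lem5}), which shows that under Condition \ref{con:1} one may assume $I$ finite and continuous on all of $\mathcal{X}$, so that $N-J$ is finite, continuous, and bounded below by $2\inf F$ everywhere, exactly as parts 1 and 3 of Lemma \ref{Lem1} require. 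With that citation added, your argument is complete.
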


\begin{remark}
By formally applying this result with $F(x)=-\infty1_{A^{c}}(x)$\ we obtain
the optimal decay rates for estimating the probability of the set $A$\ as in
(\ref{eqn:setAprob}), a result that will be stated precisely later on. In this
case the decay rate with the optimal $\boldsymbol\alpha$\ is $(2-\left(  1/2\right)
^{K-1})\inf_{x\in A}I\left(  x\right)  $, while the best possible is
$2\inf_{x\in A}I\left(  x\right)  $. With a continuous, non-negative $F$ one
does even better in some sense, since
\[
\inf_{x\in\mathcal{X}}\left\{  \left(  2-\left(  1/2\right)  ^{K-1}\right)
I\left(  x\right)  +2F\left(  x\right)  \right\}  \geq\left(  2-\left(
1/2\right)  ^{K-1}\right)  I(F).
\]

\end{remark}

\begin{remark}
To evaluate a $K$ temperature INS estimator $\hat{\eta}^{\varepsilon}\left(
\mathbf{X}^{\varepsilon};\boldsymbol{\alpha}\right)  $ we need to sum over
$K!$ terms, which in general will be computationally expensive. However, for
nonnegative $F,$ if $K=5$ and $\boldsymbol\alpha=\boldsymbol\alpha^{\ast}$ then the decay rate of
$E(\hat{\eta}^{\varepsilon}\left(  \mathbf{X}^{\varepsilon};\boldsymbol{\alpha
}\right)  )^{2}$ is guaranteed to be at least $1.9375\cdot I(F)$, while $K=6$
pushes the rate up to nearly $1.97\cdot I(F)$. These are both close to the
optimum of $2\cdot I(F)$, and require summing over $120$ and $720$ terms,
respectively. In general, one should balance $\varepsilon$ against
$K$ to decide on an appropriate value of $K$.
\end{remark}

The proof of Theorem \ref{Thm1} is based on establishing that $V_{F}%
(\boldsymbol{\alpha})$ is an upper bound for%
\[
\limsup\limits_{\varepsilon\rightarrow0}-\varepsilon\log E\left(  \hat{\eta
}^{\varepsilon}\left(  \mathbf{X}^{\varepsilon};\boldsymbol{\alpha}\right)
\right)  ^{2}%
\]
and a lower bound for \textbf{\ }%
\[
\liminf_{\varepsilon\rightarrow0}-\varepsilon\log E\left(  \hat{\eta
}^{\varepsilon}\left(  \mathbf{X}^{\varepsilon};\boldsymbol{\alpha}\right)
\right)  ^{2}%
\]
respectively. The proof will use the next three results. The first extends the
standard Laplace principle lower bound and upper bound \cite[Section 1.2 and
Theorem 1.3.6.]{dupell4}, and its proof is in the Appendix.

\begin{lemma}
\label{Lem1}Given a metric space $\mathcal{X}$, suppose $\left\{
X^{\varepsilon}\right\}  \subset\mathcal{X}$ satisfies the LDP with rate
function $I:\mathcal{X}\rightarrow\lbrack0,\infty]$. Then the following
conclusions hold.

\begin{enumerate}
\item For any lower semi-continuous function $f:\mathcal{X}\rightarrow
\mathbb{R}$ that is bounded below and any closed set $B\subset$ $\mathcal{X}%
$,
\[
\liminf_{\varepsilon\rightarrow0}-\varepsilon\log E\left(  e^{-\frac
{1}{\varepsilon}f\left(  X^{\varepsilon}\right)  }1_{B}\left(  X^{\varepsilon
}\right)  \right)  \geq\inf_{x\in B}\left[  f\left(  x\right)  +I\left(
x\right)  \right]  .
\]

\item Suppose that $\left\{  f_{\ell}\right\}  _{\ell=1,\ldots,N}$ are
continuous and $\left\{  g_{\ell}\right\}  _{\ell=1,2}$ are lower
semi-continuous. Suppose also that $g_{r}(x)-\min_{\ell\in\left\{
1,\ldots,N\right\}  }\left\{  f_{\ell}(x)\right\} $ is bounded below for $r=1,2$ and that
$B_{1},B_{2}\subset$ $\mathcal{X}$ are closed. Then
\begin{align*}
&  \liminf_{\varepsilon\rightarrow0}-\varepsilon\log E\left(  \frac
{e^{-\frac{1}{\varepsilon}\left(  g_{1}\left(  X^{\varepsilon}\right)
+g_{2}\left(  X^{\varepsilon}\right)  \right)  }}{\left(  \sum_{\ell=1}%
^{N}e^{-\frac{1}{\varepsilon}f_{\ell}\left(  X^{\varepsilon}\right)  }\right)
^{2}}1_{B_{1}}\left(  X^{\varepsilon}\right)  1_{B_{2}}\left(  X^{\varepsilon
}\right)  \right) \\
&  \quad\geq\min_{r\in\left\{  1,2\right\}  }\left\{  \inf_{x\in B_{r}}\left[
2g_{r}(x)+I(x)-2\min_{\ell\in\left\{  1,\ldots,N\right\}  }\left\{  f_{\ell
}(x)\right\}  \right]  \right\}  .
\end{align*}

\item Suppose that $I:\mathcal{X}\rightarrow\lbrack0,\infty]$ is continuous on
its domain of finiteness. Then for any continuous function $f:\mathcal{X}%
\rightarrow\mathbb{R}$ that is bounded below and any closed set $B\subset$
$\mathcal{X}$ with $B$ the closure of its interior,
\[
\limsup_{\varepsilon\rightarrow0}-\varepsilon\log E\left(  e^{-\frac
{1}{\varepsilon}f\left(  X^{\varepsilon}\right)  }1_{B}\left(  X^{\varepsilon
}\right)  \right)  \leq\inf_{x\in B}\left[  f\left(  x\right)  +I\left(
x\right)  \right]  .
\]

\end{enumerate}
\end{lemma}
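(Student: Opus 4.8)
The plan is to obtain (1) and (3) as mild extensions of the standard Laplace-principle lower and upper bounds, and then to derive the new ingredient (2) from (1) by purely algebraic manipulations of the integrand. For (1), I would fold the indicator into the exponent: since $B$ is closed, $\bar{f}\doteq f+\infty\cdot 1_{B^{c}}$ (with $e^{-\infty}=0$) is lower semi-continuous --- its sublevel sets are $B\cap\{f\le c\}$, an intersection of closed sets --- and bounded below, and $E(e^{-f(X^{\varepsilon})/\varepsilon}1_{B}(X^{\varepsilon}))=E(e^{-\bar{f}(X^{\varepsilon})/\varepsilon})$. The asserted inequality is then exactly the Laplace lower bound $\liminf_{\varepsilon\to0}-\varepsilon\log E(e^{-\bar{f}(X^{\varepsilon})/\varepsilon})\ge\inf_{x}[\bar{f}(x)+I(x)]=\inf_{x\in B}[f(x)+I(x)]$, valid for any lsc, bounded-below integrand, which is a standard consequence of the large deviation upper bound (see \cite[Section 1.2]{dupell4}); if a self-contained proof is preferred, one slices the range of $\bar f$ into intervals $[k\delta,(k+1)\delta)$, bounds each contribution by $e^{-k\delta/\varepsilon}P(\bar f(X^{\varepsilon})\le(k+1)\delta)$, applies the large deviation upper bound to the closed set $\{\bar f\le(k+1)\delta\}$, sums the resulting series, and sends $\delta\downarrow0$.

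For (2), I would set $h_{r}\doteq g_{r}-\min_{\ell}f_{\ell}$ for $r=1,2$; since the $f_{\ell}$ are continuous and the $g_{r}$ are lsc, each $h_{r}$ is lsc, and by hypothesis bounded below. Bounding the denominator below by its largest term, $\sum_{\ell}e^{-f_{\ell}(X^{\varepsilon})/\varepsilon}\ge e^{-\min_{\ell}f_{\ell}(X^{\varepsilon})/\varepsilon}$, shows the integrand is at most $e^{-(h_{1}+h_{2})(X^{\varepsilon})/\varepsilon}$. Using the elementary inequality $e^{-(a+b)}\le e^{-2a}+e^{-2b}$ (if $a\le b$ then $a+b\ge 2a$) together with $1_{B_{1}}1_{B_{2}}\le 1_{B_{r}}$ then gives the pointwise bound
\[
\frac{e^{-(g_{1}+g_{2})(X^{\varepsilon})/\varepsilon}}{\big(\sum_{\ell}e^{-f_{\ell}(X^{\varepsilon})/\varepsilon}\big)^{2}}1_{B_{1}}(X^{\varepsilon})1_{B_{2}}(X^{\varepsilon})\le e^{-2h_{1}(X^{\varepsilon})/\varepsilon}1_{B_{1}}(X^{\varepsilon})+e^{-2h_{2}(X^{\varepsilon})/\varepsilon}1_{B_{2}}(X^{\varepsilon}).
\]
Taking expectations and using $-\varepsilon\log(u+v)\ge\min(-\varepsilon\log u,-\varepsilon\log v)-\varepsilon\log 2$ reduces the claim to lower bounding $\liminf_{\varepsilon\to0}-\varepsilon\log E(e^{-2h_{r}(X^{\varepsilon})/\varepsilon}1_{B_{r}}(X^{\varepsilon}))$ for $r=1,2$; this is precisely part (1) applied with $f$ replaced by $2h_{r}$ (lsc, bounded below) and $B$ replaced by the closed set $B_{r}$, yielding $\inf_{x\in B_{r}}[2h_{r}(x)+I(x)]=\inf_{x\in B_{r}}[2g_{r}(x)+I(x)-2\min_{\ell}f_{\ell}(x)]$, as required.

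For (3), since $B$ equals the closure of its interior $B^{\circ}$, restricting the indicator to $B^{\circ}$ can only decrease the expectation, so $-\varepsilon\log E(e^{-f/\varepsilon}1_{B})\le-\varepsilon\log E(e^{-f/\varepsilon}1_{B^{\circ}})$. Fix $x_{0}\in B^{\circ}$ and $\delta>0$; by continuity of $f$ there is an open $U$ with $x_{0}\in U\subset B^{\circ}$ and $f<f(x_{0})+\delta$ on $U$, hence $E(e^{-f(X^{\varepsilon})/\varepsilon}1_{B^{\circ}}(X^{\varepsilon}))\ge e^{-(f(x_{0})+\delta)/\varepsilon}P(X^{\varepsilon}\in U)$, and the large deviation lower bound for the open set $U$ gives $\liminf_{\varepsilon\to0}\varepsilon\log P(X^{\varepsilon}\in U)\ge-\inf_{U}I\ge-I(x_{0})$. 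Therefore $\limsup_{\varepsilon\to0}-\varepsilon\log E(e^{-f(X^{\varepsilon})/\varepsilon}1_{B}(X^{\varepsilon}))\le f(x_{0})+I(x_{0})+\delta$; letting $\delta\downarrow0$ and taking the infimum over $x_{0}\in B^{\circ}$ bounds the $\limsup$ by $\inf_{x\in B^{\circ}}[f(x)+I(x)]$. It then remains to show $\inf_{x\in B^{\circ}}[f(x)+I(x)]=\inf_{x\in B}[f(x)+I(x)]$: the inequality $\ge$ is immediate, and for $\le$ I would fix $x^{\ast}\in B$ with $I(x^{\ast})<\infty$ (if no such point exists both infima are $+\infty$), approximate $x^{\ast}$ by a sequence in $B^{\circ}$ (possible since $B=\overline{B^{\circ}}$) lying in the domain of finiteness of $I$, and use the continuity of $I$ there to pass to the limit in $f+I$.

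The main obstacle is exactly this last identity in (3): the remaining steps are bookkeeping on top of the classical Laplace bounds, but $\inf_{B^{\circ}}[f+I]=\inf_{B}[f+I]$ is the point at which one genuinely needs both that $B$ is the closure of its interior and that $I$ is continuous on its domain of finiteness --- a downward jump of $I$ at a boundary point of $B$ would defeat the Laplace upper bound --- and some care is required to ensure the approximating sequence in $B^{\circ}$ can be chosen within the domain of finiteness of $I$ so that the limit may be taken.
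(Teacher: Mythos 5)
Your proposal is correct, and for parts 1 and 2 it takes a genuinely different route from the paper. For part 1 the paper never applies a Laplace-principle bound to an extended-real-valued function: it replaces $1_{B}$ by $e^{-\frac{1}{\varepsilon}M(1-g_{M})}$, where $g_{M}$ is a continuous, distance-based mollification of the indicator, invokes the Laplace upper bound for the finite lsc function $f+M(1-g_{M})$, and then sends $M\rightarrow\infty$ through a compactness argument that uses the compact level sets of $I$ and lower semicontinuity of $f$ and $I$. You instead fold the constraint into the exponent via $\bar{f}=f+\infty\cdot 1_{B^{c}}$ and prove the extended Varadhan-type bound directly by slicing over the closed sets $\{\bar{f}\leq(k+1)\delta\}$ and applying the large deviation upper bound to each; this is legitimate and in fact more elementary (it does not even use goodness of $I$), the only detail to write out being the infinite sum over slices: split it at a $k_{0}$ with $k_{0}\delta$ exceeding the target rate and use that the $k$th term is at most $e^{-k\delta/\varepsilon}$. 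For part 2 the paper bounds the denominator by its largest term and then applies Cauchy--Schwarz, whereas you use the pointwise inequality $e^{-(a+b)}\leq e^{-2a}+e^{-2b}$ together with $1_{B_{1}}1_{B_{2}}\leq 1_{B_{r}}$; both reduce the claim to part 1 and yield the same minimum, so this difference is cosmetic.

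Part 3 is essentially the paper's argument (interior near-minimizer, continuity of $f$ on a small ball inside $B^{\circ}$, LD lower bound for that open set). The ``care'' you flag at the end, namely choosing the approximating interior sequence inside $\{I<\infty\}$, is precisely the point where the paper's own proof silently relies on the continuity hypothesis: the hypothesis should be read as saying that $I$, viewed as a $[0,\infty]$-valued function, is continuous at every point where it is finite. Under that reading any sequence $y_{n}\in B^{\circ}$ with $y_{n}\rightarrow x^{\ast}$ satisfies $I(y_{n})\rightarrow I(x^{\ast})<\infty$, so no selection within the domain of finiteness is needed and your remaining step closes immediately. If instead one only assumes continuity of the restriction of $I$ to $\{I<\infty\}$, the required interior near-minimizer need not exist (take $I$ finite only on a closed half-space whose boundary hyperplane lies in $\partial B$ and $B$ the complementary closed half-space), so your caution is warranted, but it is resolved by the stronger reading of the hypothesis rather than by a cleverer choice of sequence; with that reading your proof is at the same level of rigor as the paper's.
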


\bigskip It will be convenient to exclude the points in the state space where
the rate function equals $\infty$. The following lemma gives a condition under
which this is possible. The proof is in the Appendix.

\begin{lemma}
\label{Lem5}Given a metric space $\mathcal{X}$, suppose that $\left\{
X^{\varepsilon}\right\}  \subset\mathcal{X}$ satisfies the LDP with rate
function $I:\mathcal{X}\rightarrow\lbrack0,\infty]$, and assume that
\[
\limsup_{\varepsilon\rightarrow0}\varepsilon\log P\left(  X^{\varepsilon}%
\in\left\{  I=\infty\right\}  \right)  =-\infty.
\]
Let $\tilde{X}^{\varepsilon}\doteq X^{\varepsilon}1_{\{I(X^{\varepsilon
})<\infty\}}+x^{\ast}1_{\{I(X^{\varepsilon})=\infty\}}$, where $x^{\ast}%
\in\mathcal{X}$ is any point such that $I(x^{\ast})<\infty$. Then $\{\tilde
{X}^{\varepsilon}\}\subset\mathcal{\tilde{X}}$ satisfies the LDP with rate
function $\tilde{I}:\mathcal{\tilde{X}}\rightarrow\lbrack0,\infty),$ where
$\mathcal{\tilde{X}=X}\cap\{I<\infty\}$ and $\tilde{I}=I$ on $\mathcal{\tilde
{X}}$.
\end{lemma}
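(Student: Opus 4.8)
The plan is to prove Lemma \ref{Lem5} directly from the definition of the LDP, verifying separately the large deviation upper bound (for closed sets) and lower bound (for open sets) for $\{\tilde X^\varepsilon\}$ on the closed subspace $\mathcal{\tilde X} = \mathcal{X} \cap \{I < \infty\}$, using throughout the hypothesis that $P(X^\varepsilon \in \{I=\infty\})$ decays superexponentially in $1/\varepsilon$.

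First I would set up notation: write $G_\varepsilon \doteq \{I(X^\varepsilon) = \infty\}$ for the ``bad'' event, so that by assumption $\varepsilon \log P(G_\varepsilon) \to -\infty$, and note $\tilde X^\varepsilon = X^\varepsilon$ on $G_\varepsilon^c$. For the lower bound, let $O \subset \mathcal{\tilde X}$ be relatively open, so $O = \tilde O \cap \mathcal{\tilde X}$ for some open $\tilde O \subset \mathcal{X}$; since $\tilde I = I$ on $\mathcal{\tilde X}$ and $I = \infty$ off $\mathcal{\tilde X}$, we have $\inf_{x \in O} \tilde I(x) = \inf_{x \in \tilde O} I(x)$. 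Then $P(\tilde X^\varepsilon \in O) \geq P(X^\varepsilon \in \tilde O) - P(G_\varepsilon)$ — indeed, removing the bad event can only delete points, and on $G_\varepsilon^c$ we have $\tilde X^\varepsilon = X^\varepsilon \in \mathcal{\tilde X}$ automatically. Actually more carefully: $P(\tilde X^\varepsilon \in O) \geq P(X^\varepsilon \in \tilde O, G_\varepsilon^c) \geq P(X^\varepsilon \in \tilde O) - P(G_\varepsilon)$. Taking $-\varepsilon \log$, using that the negative log of a difference $a_\varepsilon - b_\varepsilon$ with $b_\varepsilon/a_\varepsilon \to 0$ has the same liminf as $-\varepsilon\log a_\varepsilon$ (here $-\varepsilon \log a_\varepsilon$ is bounded by $\inf_{\tilde O} I < \infty$ eventually, while $-\varepsilon \log b_\varepsilon \to \infty$), one gets $\liminf_{\varepsilon \to 0} -\varepsilon \log P(\tilde X^\varepsilon \in O) \leq \inf_{x \in \tilde O} I(x) = \inf_{x \in O}\tilde I(x)$, which is the LDP lower bound.

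Next, for the upper bound, let $C \subset \mathcal{\tilde X}$ be relatively closed; write $C = \tilde C \cap \mathcal{\tilde X}$ with $\tilde C$ closed in $\mathcal{X}$, and again $\inf_{x \in C} \tilde I(x) = \inf_{x \in \tilde C} I(x)$ since the points of $\tilde C$ lying outside $\mathcal{\tilde X}$ have $I = \infty$. Decompose $P(\tilde X^\varepsilon \in C) \leq P(X^\varepsilon \in \tilde C) + P(G_\varepsilon)$ (on $G_\varepsilon^c$, $\tilde X^\varepsilon = X^\varepsilon$, while on $G_\varepsilon$ we have the trivial bound). Taking $-\varepsilon\log$ of a sum $a_\varepsilon + b_\varepsilon$ yields $\min$ of the two rates in the limit, so $\liminf_{\varepsilon\to 0} -\varepsilon\log P(\tilde X^\varepsilon \in C) \geq \min\{\liminf -\varepsilon \log P(X^\varepsilon \in \tilde C),\ \liminf -\varepsilon\log P(G_\varepsilon)\} \geq \min\{\inf_{x \in \tilde C} I(x),\ \infty\} = \inf_{x \in C}\tilde I(x)$. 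Together with the lower bound this establishes the LDP for $\{\tilde X^\varepsilon\}$ with rate $\tilde I$, and $\tilde I$ is a bona fide $[0,\infty)$-valued rate function since its sublevel sets are the sublevel sets of $I$ (which are compact) intersected with $\mathcal{\tilde X}$ and in fact already lie in $\mathcal{\tilde X}$.

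I do not expect a serious obstacle here; the only point requiring a little care is the elementary but essential observation about how $-\varepsilon \log$ interacts with sums and differences of exponentially small quantities of differing orders — specifically that a superexponentially small perturbation does not affect the decay rate — together with keeping straight the identification of relative (sub)sets of $\mathcal{\tilde X}$ with sets in $\mathcal{X}$ and the fact that the rate-function infimum is unchanged by this identification because $I \equiv \infty$ on the removed set. One should also note at the outset that $\mathcal{\tilde X} = \mathcal{X} \cap \{I < \infty\}$, while not necessarily closed in general, inherits a metric making it Polish-compatible for stating the LDP, and that the compactness of sublevel sets of $I$ guarantees $\{I < \infty\}$ is $\sigma$-compact, which is more than enough for the argument.
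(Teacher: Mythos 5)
Your argument is correct and follows essentially the same route as the paper's proof: decompose on the superexponentially unlikely event $\{I(X^{\varepsilon})=\infty\}$, use that adding or subtracting a superexponentially small probability does not change decay rates, and identify $\inf \tilde{I}$ over relative subsets of $\mathcal{\tilde{X}}$ with $\inf I$ over the corresponding subsets of $\mathcal{X}$ because $I\equiv\infty$ on the removed set. The only cosmetic difference is that the paper establishes the lower bound locally on balls $\tilde{B}(x,\delta)$ via the inclusion $\tilde{B}(x,\delta)\subset B(x,\delta)\subset\tilde{B}(x,\delta)\cup\{I=\infty\}$ and a $\max$ identity for sums, whereas you handle general relatively open sets directly with a subtraction; both are valid.
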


\begin{remark}
\label{Rmk1}In the case when $X^{\varepsilon}\sim\mu^{\varepsilon}$ with
\[
\mu^{\varepsilon}\left(  dx\right)  =\frac{1}{Z^{\varepsilon}}e^{-\frac
{1}{\varepsilon}I\left(  x\right)  }\gamma\left(  dx\right)
\]
we find%
\[
P\left(  X^{\varepsilon}\in\left\{  I=\infty\right\}  \right)  =\int_{\left\{
I=\infty\right\}  }\frac{1}{Z^{\varepsilon}}e^{-\frac{1}{\varepsilon}I\left(
x\right)  }\gamma\left(  dx\right)  =\int_{\left\{  I=\infty\right\}  }%
0\gamma\left(  dx\right)  =0,
\]
which gives%
\[
\limsup\limits_{\varepsilon\rightarrow0}\varepsilon\log P\left(
X^{\varepsilon}\in\left\{  I=\infty\right\}  \right)  =-\infty.
\]
It follows that under Condition \ref{con:1} we can assume without loss that
$I(x)<\infty$ for each $x\in\mathcal{X}$ and $I$ is continuous on
$\mathcal{X}$.
\end{remark}

\begin{lemma}
\label{Lem4}Suppose $\left\{  \mathbf{X}^{\varepsilon}\right\}  _{\varepsilon
}\subset\mathcal{X}^{K}$ satisfies the LDP with rate function $I,$ and let
$\mathbf{Y}^{\varepsilon}$ be a symmetrized version of $\mathbf{X}%
^{\varepsilon},$ that is%
\[
P\left(  \mathbf{Y}^{\varepsilon}\in A\right)  =\frac{1}{K!}\sum\nolimits_{\sigma
\in\Sigma_{K}}P\left(  \mathbf{X}_{\sigma}^{\varepsilon}\in A\right)  \text{
for all }A\subset\mathcal{B}(\mathcal{X}^{K}),
\]
where
\[
\mathbf{X}_{\sigma}^{\varepsilon}\doteq\left(  X_{\sigma\left(  1\right)
}^{\varepsilon},\ldots,X_{\sigma\left(  K\right)  }^{\varepsilon}\right)  .
\]
Then $\left\{  \mathbf{Y}^{\varepsilon}\right\}  _{\varepsilon}\subset
\mathcal{X}^{K}$ satisfies the LDP with rate function
\[
J\left(  \mathbf{x}\right)  =\min\nolimits_{\sigma\in\Sigma_{K}}I\left(  x_{\sigma
\left(  1\right)  },\ldots,x_{\sigma\left(  K\right)  }\right)  .
\]

\end{lemma}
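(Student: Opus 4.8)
The plan is to recognize the law of $\mathbf{Y}^{\varepsilon}$ as an equally weighted finite mixture of the laws of the permuted processes $\mathbf{X}_{\sigma}^{\varepsilon}$, transfer the LDP to each of these by the contraction principle, and then assemble the mixture using the principle of the largest term. First, for $\sigma\in\Sigma_{K}$ let $\pi_{\sigma}:\mathcal{X}^{K}\rightarrow\mathcal{X}^{K}$ be the coordinate permutation $\pi_{\sigma}(\mathbf{x})\doteq(x_{\sigma(1)},\ldots,x_{\sigma(K)})$, so that $\mathbf{X}_{\sigma}^{\varepsilon}=\pi_{\sigma}(\mathbf{X}^{\varepsilon})$; note that $\pi_{\sigma}$ is a homeomorphism of $\mathcal{X}^{K}$ with inverse $\pi_{\sigma^{-1}}$. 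Since $\{\mathbf{X}^{\varepsilon}\}$ satisfies the LDP on $\mathcal{X}^{K}$ with rate function $I$, the contraction principle (elementary here, since $\pi_{\sigma}$ is a homeomorphism and so carries open/closed sets to open/closed sets) shows that $\{\mathbf{X}_{\sigma}^{\varepsilon}\}$ satisfies the LDP with rate function $I_{\sigma}\doteq I\circ\pi_{\sigma}^{-1}$, i.e. $I_{\sigma}(\mathbf{y})=I(y_{\sigma^{-1}(1)},\ldots,y_{\sigma^{-1}(K)})$. Because $\sigma\mapsto\sigma^{-1}$ is a bijection of $\Sigma_{K}$, we have $\min_{\sigma\in\Sigma_{K}}I_{\sigma}=J$, and $J$ is itself a rate function: it is lower semicontinuous as a finite minimum of lower semicontinuous functions, and $\{J\le M\}=\bigcup_{\sigma\in\Sigma_{K}}\{I_{\sigma}\le M\}$ is a finite union of compact sets, hence compact.

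It then remains to verify the two LDP bounds for $\mathbf{Y}^{\varepsilon}$. For any closed $B\subset\mathcal{X}^{K}$ one has $P(\mathbf{Y}^{\varepsilon}\in B)=\frac{1}{K!}\sum_{\sigma\in\Sigma_{K}}P(\mathbf{X}_{\sigma}^{\varepsilon}\in B)\le\sum_{\sigma\in\Sigma_{K}}P(\mathbf{X}_{\sigma}^{\varepsilon}\in B)$, and since there are only $K!$ summands the principle of the largest term yields $\limsup_{\varepsilon\to0}\varepsilon\log P(\mathbf{Y}^{\varepsilon}\in B)\le\max_{\sigma\in\Sigma_{K}}\limsup_{\varepsilon\to0}\varepsilon\log P(\mathbf{X}_{\sigma}^{\varepsilon}\in B)\le\max_{\sigma\in\Sigma_{K}}\bigl(-\inf_{\mathbf{x}\in B}I_{\sigma}(\mathbf{x})\bigr)=-\inf_{\mathbf{x}\in B}J(\mathbf{x})$. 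For any open $G\subset\mathcal{X}^{K}$ and any fixed $\sigma$, $P(\mathbf{Y}^{\varepsilon}\in G)\ge\frac{1}{K!}P(\mathbf{X}_{\sigma}^{\varepsilon}\in G)$, so $\liminf_{\varepsilon\to0}\varepsilon\log P(\mathbf{Y}^{\varepsilon}\in G)\ge\liminf_{\varepsilon\to0}\varepsilon\log P(\mathbf{X}_{\sigma}^{\varepsilon}\in G)\ge-\inf_{\mathbf{x}\in G}I_{\sigma}(\mathbf{x})$; maximizing the right-hand side over $\sigma\in\Sigma_{K}$ gives $-\inf_{\mathbf{x}\in G}J(\mathbf{x})$. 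Together with the compactness of the level sets of $J$ this is the asserted LDP.

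There is no essential obstacle in this argument: the only points requiring a little care are tracking how the coordinate permutation acts on the rate function (the switch from $\sigma$ to $\sigma^{-1}$, which is washed out by the final minimization over $\Sigma_{K}$) and the routine verification of the principle of the largest term for a sum of $K!$ exponentially small probabilities. If one prefers not to invoke the contraction principle by name, the LDP for each $\mathbf{X}_{\sigma}^{\varepsilon}$ can be read off directly from the closed-set and open-set formulation of the LDP for $\mathbf{X}^{\varepsilon}$ using the continuity of $\pi_{\sigma}$ and $\pi_{\sigma}^{-1}$.
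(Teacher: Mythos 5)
Your proof is correct, but it takes a different route from the paper. You work directly with the set-based formulation of the LDP: you view the law of $\mathbf{Y}^{\varepsilon}$ as an equally weighted mixture of the laws of $\mathbf{X}_{\sigma}^{\varepsilon}=\pi_{\sigma}(\mathbf{X}^{\varepsilon})$, transfer the LDP to each permuted copy via the (homeomorphism case of the) contraction principle to get the rate $I_{\sigma}=I\circ\pi_{\sigma}^{-1}$, and then combine the $K!$ terms with the principle of the largest term for the closed-set upper bound and a single-term comparison for the open-set lower bound, checking along the way that $J=\min_{\sigma}I_{\sigma}$ is a genuine rate function. The paper instead argues entirely at the level of the Laplace principle: it evaluates $\lim_{\varepsilon\to 0}\varepsilon\log E\,e^{-g(\mathbf{Y}^{\varepsilon})/\varepsilon}$ for bounded continuous $g$ by averaging the Laplace limits of the $g_{\sigma}(\mathbf{X}^{\varepsilon})$ and interchanging the $\min$ over $\Sigma_{K}$ with the infimum over $\mathbf{x}$, so that the LDP with rate $J$ follows from the equivalence of the Laplace principle and the LDP (as in the cited reference). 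Your version is more elementary in that it never invokes this equivalence, and it makes explicit the verification that $J$ has compact level sets and is lower semicontinuous, a point the paper's proof leaves implicit when it passes from the Laplace limit back to the LDP; the paper's version is shorter and keeps the bookkeeping of the $\sigma$ versus $\sigma^{-1}$ relabeling inside a single chain of equalities for the test-function limits. Both arguments hinge on the same two facts --- invariance of the LDP under coordinate permutation and the fact that a sum of $K!$ exponentially decaying quantities decays at the rate of its largest term --- so either proof would serve.
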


\begin{proof}
Since $\left\{  \mathbf{X}^{\varepsilon}\right\}  _{\varepsilon}$ satisfies
the LDP with rate function $I$ the Laplace principle holds \cite[Section
1.2]{dupell4}, and therefore for any bounded and continuous function
$g:\mathcal{X}^{K}\rightarrow\mathbb{R}$%
\[
\lim_{\varepsilon\rightarrow0}\varepsilon\log Ee^{-\frac{1}{\varepsilon
}g\left(  X^{\varepsilon}\right)  }=-\inf\nolimits_{\mathbf{x}\in\mathcal{X}^{K}%
}\left[  g\left(  \mathbf{x}\right)  +I\left(  \mathbf{x}\right)  \right]  .
\]
For such $g$ and any $\sigma\in\Sigma_{K},$ define $g_{\sigma}:\mathcal{X}%
^{K}\rightarrow\mathbb{R}$ by
\[
g_{\sigma}\left(  x_{1},\ldots,x_{K}\right)  =g\left(  x_{\sigma\left(
1\right)  },\ldots,x_{\sigma\left(  K\right)  }\right)  ,
\]
so that $g_{\sigma}$ is also bounded and continuous. Then $g\left(
\mathbf{X}_{\sigma}^{\varepsilon}\right)  =g_{\sigma}\left(  \mathbf{X}%
^{\varepsilon}\right)  $, and therefore
\[
\lim_{\varepsilon\rightarrow0}\varepsilon\log Ee^{-\frac{1}{\varepsilon
}g\left(  \mathbf{X}_{\sigma}^{\varepsilon}\right)  }=\lim_{\varepsilon
\rightarrow0}\varepsilon\log Ee^{-\frac{1}{\varepsilon}g_{\sigma}\left(
\mathbf{X}^{\varepsilon}\right)  }=-\inf_{\mathbf{x}\in\mathcal{X}^{K}}\left[
g_{\sigma}\left(  \mathbf{x}\right)  +I\left(  \mathbf{x}\right)  \right]  .
\]
Since $\mathbf{Y}^{\varepsilon}$ is a symmetrized version of $\mathbf{X}%
^{\varepsilon}$%
\[
\varepsilon\log Ee^{-\frac{1}{\varepsilon}g\left(  \mathbf{Y}^{\varepsilon
}\right)  }=\varepsilon\log\left[  \frac{1}{K!}\sum\nolimits_{\sigma\in\Sigma_{K}%
}Ee^{-\frac{1}{\varepsilon}g\left(  \mathbf{X}_{\sigma}^{\varepsilon}\right)
}\right]  ,
\]
and thus with $I_{\sigma^{-1}}\left(  \mathbf{x}\right)  =I(\mathbf{x}%
_{\sigma^{-1}})$
\begin{align*}
 \lim\nolimits_{\varepsilon\rightarrow0}\varepsilon\log Ee^{-\frac{1}{\varepsilon
}g\left(  \mathbf{Y}^{\varepsilon}\right)  }
&  =-\min\nolimits_{\sigma\in\Sigma_{K}}\left\{  \inf\nolimits_{\mathbf{x}\in
\mathcal{X}^{K}}\left[  g_{\sigma}\left(  \mathbf{x}\right)  +I\left(
\mathbf{x}\right)  \right]  \right\} \\
&  =-\min\nolimits_{\sigma\in\Sigma_{K}}\left\{  \inf\nolimits_{\mathbf{x}\in
\mathcal{X}^{K}}\left[  g\left(  \mathbf{x}\right)  +I_{\sigma^{-1}}\left(
\mathbf{x}\right)  \right]  \right\} \\
&  =-\inf\nolimits_{\mathbf{x}\in\mathcal{X}^{K}}\left[  g\left(  \mathbf{x}%
\right)  +\min\nolimits_{\sigma\in\Sigma_{K}}I_{\sigma^{-1}}\left(  \mathbf{x}\right)
\right] \\
&  =-\inf\nolimits_{\mathbf{x}\in\mathcal{X}^{K}}\left[  g\left(  \mathbf{x}%
\right)  +J\left(  \mathbf{x}\right)  \right]  .
\end{align*}
\medskip
\end{proof}

\begin{proof}
[Proof of \textbf{Theorem \ref{Thm1}}]Recall that $\hat{\eta}^{\varepsilon
}\left(  \mathbf{X}^{\varepsilon};\boldsymbol{\alpha}\right)  \overset{d}%
{=}\hat{\eta}^{\varepsilon}\left(  \mathbf{Y}^{\varepsilon};\boldsymbol{\alpha
}\right)  $, and in particular%
\[
E\left(  \hat{\eta}^{\varepsilon}\left(  \mathbf{X}^{\varepsilon
};\boldsymbol{\alpha}\right)  \right)  ^{2}=E\left(  \hat{\eta}^{\varepsilon
}\left(  \mathbf{Y}^{\varepsilon};\boldsymbol{\alpha}\right)  \right)  ^{2},
\]
so the decay rate of $E(\hat{\eta}^{\varepsilon}\left(  \mathbf{X}%
^{\varepsilon};\boldsymbol{\alpha}\right)  )^{2}$ is the same as that of
$E(\hat{\eta}^{\varepsilon}\left(  \mathbf{Y}^{\varepsilon};\boldsymbol{\alpha
}\right)  )^{2}$. By definition,
\begin{align*}
 E\left(  \hat{\eta}^{\varepsilon}\left(  \mathbf{Y}^{\varepsilon
};\boldsymbol{\alpha}\right)  \right)  ^{2}
&  =E\left(  \sum\nolimits_{\sigma\in\Sigma_{K}}\rho^{\varepsilon}\left(
\mathbf{Y}_{\sigma}^{\varepsilon};\boldsymbol{\alpha}\right)  e^{-\frac
{1}{\varepsilon}F\left(  Y_{\sigma\left(  1\right)  }^{\varepsilon}\right)
}\right)  ^{2}\\
&  =\sum\nolimits_{\sigma\in\Sigma_{K}}\sum\nolimits_{\bar{\sigma}\in\Sigma_{K}}E\left(
\rho^{\varepsilon}\left(  \mathbf{Y}_{\sigma}^{\varepsilon};\boldsymbol{\alpha
}\right)  \rho^{\varepsilon}\left(  \mathbf{Y}_{\bar{\sigma}}^{\varepsilon
};\boldsymbol{\alpha}\right)  e^{-\frac{1}{\varepsilon}F\left(  Y_{\sigma
\left(  1\right)  }^{\varepsilon}\right)  }e^{-\frac{1}{\varepsilon}F\left(
Y_{\bar{\sigma}\left(  1\right)  }^{\varepsilon}\right)  }\right)  .
\end{align*}
According to Remark \ref{Rmk1}, we may assume that $I$ is finite and continuous.

For $\tau\in\Sigma_{K}$ define $f_{\tau}\left(  \mathbf{x}\right)  \doteq
\sum_{j=1}^{K}\alpha_{j}I\left(  x_{\tau\left(  j\right)  }\right)  $, and for
$\sigma,\bar{\sigma}\in\Sigma_{K}$ let%
\[
g_{1}\left(  \mathbf{x}\right)  \doteq f_{\sigma}\left(  \mathbf{x}\right)
+F\left(  x_{\sigma\left(  1\right)  }\right)  \text{ and }g_{2}\left(
\mathbf{x}\right)  \doteq f_{\bar{\sigma}}\left(  \mathbf{x}\right)  +F\left(
x_{\bar{\sigma}\left(  1\right)  }\right)  .
\]
Since $I$ and $F$ are finite, continuous and bounded below, $g_{r}$ and
$f_{\tau}$ are also finite, continuous and  $g_{r}(x)-\min_{\ell\in\left\{
1,\ldots,N\right\}  }\left\{  f_{\ell}(x)\right\} $ is bounded below for $r=1,2$.

Applying Lemma \ref{Lem1} with $B_{1},$ $B_{2}=\mathcal{X}^{K}$,
Lemma \ref{Lem4}, and using the definition of $\rho^{\varepsilon}$ in
(\ref{eqn:defofrho}) gives%
\begin{align*}
&  \liminf_{\varepsilon\rightarrow0}-\varepsilon\log E\left(  \rho
^{\varepsilon}\left(  \mathbf{Y}_{\sigma}^{\varepsilon};\boldsymbol{\alpha
}\right)  \rho^{\varepsilon}\left(  \mathbf{Y}_{\bar{\sigma}}^{\varepsilon
};\boldsymbol{\alpha}\right)  e^{-\frac{1}{\varepsilon}F\left(  Y_{\sigma
\left(  1\right)  }^{\varepsilon}\right)  }e^{-\frac{1}{\varepsilon}F\left(
Y_{\bar{\sigma}\left(  1\right)  }^{\varepsilon}\right)  }\right) \\
&  \quad\geq\inf_{\mathbf{x\in}\mathcal{X}^{K}}\left[  2f_{\sigma}\left(
\mathbf{x}\right)  +2F\left(  x_{\sigma\left(  1\right)  }\right)  -\min
_{\tau\in\Sigma_{K}}\left\{  f_{\tau}\left(  \mathbf{x}\right)  \right\}
\right] 
{\textstyle\bigwedge}
\inf_{\mathbf{x\in}\mathcal{X}^{K}}\left[  2f_{\bar{\sigma}}\left(
\mathbf{x}\right)  +2F\left(  x_{\bar{\sigma}\left(  1\right)  }\right)
-\min_{\tau\in\Sigma_{K}}\left\{  f_{\tau}\left(  \mathbf{x}\right)  \right\}
\right]  .
\end{align*}
Let $\iota\in\Sigma_{K}$ denote the identity permutation. Since for any
nonnegative sequences $\left\{  a_{\varepsilon}\right\}  $ and $\left\{
b_{\varepsilon}\right\}  $
\[
\liminf_{\varepsilon\rightarrow0}-\varepsilon\log\left(  a_{\varepsilon
}+b_{\varepsilon}\right)  =\min\left\{  \liminf_{\varepsilon\rightarrow
0}-\varepsilon\log a_{\varepsilon},\liminf_{\varepsilon\rightarrow
0}-\varepsilon\log b_{\varepsilon}\right\}  ,
\]
we find that%
\begin{align*}
 \liminf_{\varepsilon\rightarrow0}-\varepsilon\log E\left(  \hat{\eta
}^{\varepsilon}\left(  \mathbf{Y}^{\varepsilon};\boldsymbol{\alpha}\right)
\right)  ^{2}
&  \geq\min_{\sigma\in\Sigma_{K}}\left\{  \inf_{\mathbf{x\in}%
\mathcal{X}^{K}}\left[  2f_{\sigma}\left(  \mathbf{x}\right)  +2F\left(
x_{\sigma\left(  1\right)  }\right)  -\min_{\tau\in\Sigma_{K}}\left\{
f_{\tau}\left(  \mathbf{x}\right)  \right\}  \right]  \right\} \\
&  =\inf_{\mathbf{x\in}\mathcal{X}^{K}}\left[  2f_{\iota}\left(
\mathbf{x}\right)  +2F\left(  x_{1}\right)  -\min_{\tau\in\Sigma_{K}}\left\{
f_{\tau}\left(  \mathbf{x}\right)  \right\}  \right]  ,
\end{align*}
where the equality holds because every infimum takes the same value.

For the other direction, observe that
\[
\rho^{\varepsilon}\left(  \mathbf{x};\boldsymbol{\alpha}\right)   
=\frac{e^{-\frac{1}{\varepsilon}f_{\iota}\left(  \mathbf{x}\right)  }}%
{\sum_{\tau\in\Sigma_{K}}e^{-\frac{1}{\varepsilon}f_{\tau}\left(
\mathbf{x}\right)  }}
  \geq\frac{e^{-\frac{1}{\varepsilon}f_{\iota}\left(  \mathbf{x}\right)  }%
}{K!\max_{\tau\in\Sigma_{K}}\left(  e^{-\frac{1}{\varepsilon}f_{\tau}\left(
\mathbf{x}\right)  }\right)  }  =\frac{1}{K!}e^{-\frac{1}{\varepsilon}\left[  f_{\iota}\left(
\mathbf{x}\right)  -\min_{\tau\in\Sigma_{K}}\left(  f_{\tau}\left(
\mathbf{x}\right)  \right)  \right]  }.
\]
For any $\sigma,\bar{\sigma}\in\Sigma_{K}$, the function
\[
f_{\sigma}\left(  \mathbf{x}\right)  +f_{\bar{\sigma}}\left(  \mathbf{x}%
\right)  +F\left(  x_{\sigma\left(  1\right)  }\right)  +F\left(
x_{\bar{\sigma}\left(  1\right)  }\right)  -2\min_{\tau\in\Sigma_{K}}\left(
f_{\tau}\left(  \mathbf{x}\right)  \right)  ,
\]
is finite, continuous and bounded below. Thus applying part 3 of Lemma
\ref{Lem1} with $B=\mathcal{X}^{K}$ gives
\begin{align*}
&  \limsup_{\varepsilon\rightarrow0}-\varepsilon\log E\left[  \rho
^{\varepsilon}\left(  \mathbf{Y}_{\sigma}^{\varepsilon};\boldsymbol{\alpha
}\right)  \rho^{\varepsilon}\left(  \mathbf{Y}_{\bar{\sigma}}^{\varepsilon
};\boldsymbol{\alpha}\right)  e^{-\frac{1}{\varepsilon}F\left(  Y_{\sigma
\left(  1\right)  }^{\varepsilon}\right)  }e^{-\frac{1}{\varepsilon}F\left(
Y_{\bar{\sigma}\left(  1\right)  }^{\varepsilon}\right)  }\right] \\
&  \leq\limsup_{\varepsilon\rightarrow0}-\varepsilon\log E\left[  \exp
-\frac{1}{\varepsilon}\left[  f_{\sigma}\left(  \mathbf{Y}^{\varepsilon
}\right)  +f_{\bar{\sigma}}\left(  \mathbf{Y}^{\varepsilon}\right)  
  +F\left(
Y_{\sigma\left(  1\right)  }^{\varepsilon}\right)  +F\left(  Y_{\bar{\sigma
}\left(  1\right)  }^{\varepsilon}\right)  -2\min_{\tau}\left(  f_{\tau
}\left(  \mathbf{Y}^{\varepsilon}\right)  \right)  \right]\right] \\
&  \leq\inf_{\mathbf{x\in}\mathcal{X}^{K}}\left[  f_{\sigma}\left(
\mathbf{x}\right)  +f_{\bar{\sigma}}\left(  \mathbf{x}\right)  +F\left(
x_{\sigma\left(  1\right)  }\right)  +F\left(  x_{\bar{\sigma}\left(
1\right)  }\right)  -\min_{\tau\in\Sigma_{K}}\left(  f_{\tau}\left(
\mathbf{x}\right)  \right)  \right]  .
\end{align*}
Since for any nonnegative sequences $\left\{  a_{\varepsilon}\right\}  $ and
$\left\{  b_{\varepsilon}\right\}  $
\[
\limsup_{\varepsilon\rightarrow0}-\varepsilon\log\left(  a_{\varepsilon
}+b_{\varepsilon}\right)  \leq\min\left\{  \limsup_{\varepsilon\rightarrow
0}-\varepsilon\log a_{\varepsilon},\limsup_{\varepsilon\rightarrow
0}-\varepsilon\log b_{\varepsilon}\right\}  ,
\]
we find%
\begin{align*}
&  \limsup_{\varepsilon\rightarrow0}-\varepsilon\log E\left(  \hat{\eta
}^{\varepsilon}\left(  \mathbf{Y}^{\varepsilon};\boldsymbol{\alpha}\right)
\right)  ^{2}\\
&   \leq\min_{\sigma,\bar{\sigma}\in\Sigma_{K}}\left\{  \limsup_{\varepsilon
\rightarrow0}-\varepsilon\log E\left[  \rho^{\varepsilon}\left(
\mathbf{Y}_{\sigma}^{\varepsilon};\boldsymbol{\alpha}\right)  \rho
^{\varepsilon}\left(  \mathbf{Y}_{\bar{\sigma}}^{\varepsilon}%
;\boldsymbol{\alpha}\right)  e^{-\frac{1}{\varepsilon}F\left(  Y_{\sigma
\left(  1\right)  }^{\varepsilon}\right)  }e^{-\frac{1}{\varepsilon}F\left(
Y_{\bar{\sigma}\left(  1\right)  }^{\varepsilon}\right)  }\right]  \right\} \\
&  \leq\min_{\sigma,\bar{\sigma}\in\Sigma_{K}}\left\{  \inf_{\mathbf{x\in
}\mathcal{X}^{K}}\left[  f_{\sigma}\left(  \mathbf{x}\right)  +f_{\bar{\sigma
}}\left(  \mathbf{x}\right)  +F\left(  x_{\sigma\left(  1\right)  }\right)
+F\left(  x_{\bar{\sigma}\left(  1\right)  }\right)  -\min_{\tau\in\Sigma_{K}%
}\left(  f_{\tau}\left(  \mathbf{x}\right)  \right)  \right]  \right\} \\
&  =\min_{\sigma\in\Sigma_{K}}\left\{  \inf_{\mathbf{x\in}\mathcal{X}^{K}%
}\left[  2f_{\sigma}\left(  \mathbf{x}\right)  +2F\left(  x_{\sigma\left(
1\right)  }\right)  -\min_{\tau\in\Sigma_{K}}\left(  f_{\tau}\left(
\mathbf{x}\right)  \right)  \right]  \right\} \\
&  =\inf_{\mathbf{x\in}\mathcal{X}^{K}}\left[  2f_{\iota}\left(
\mathbf{x}\right)  +2F\left(  x_{1}\right)  -\min_{\tau\in\Sigma_{K}}\left\{
f_{\tau}\left(  \mathbf{x}\right)  \right\}  \right]  .
\end{align*}
Thus the upper and lower bounds coincide. We now use Lemma \ref{Lem2} to
identify the limit as $V_{F}(\boldsymbol{\alpha})$, which completes the
proof.\medskip
\end{proof}

\begin{proof}
[Proof of Lemma \ref{Lem2}]The first step is to express the space
$\mathcal{X}^{K}$ as $\cup_{\tau\in\Sigma_{K}}O_{\tau},$ where
\[
O_{\tau}\doteq\left\{  \mathbf{x}\in\mathcal{X}^{K}:I\left(  x_{\tau\left(
1\right)  }\right)  \leq I\left(  x_{\tau\left(  2\right)  }\right)
\leq\cdots\leq I\left(  x_{\tau\left(  K\right)  }\right)  \right\}  .
\]
For any $\tau\in\Sigma_{K}$ there exists $L\in\{1,\ldots,K\}$ which depends on
$\tau$ such that $1=\tau\left(  L\right)  .$ We will use the rearrangement
inequality \cite[Section 10.2, Theorem 368]{harlitpol}, which says that if
$\mathbf{x}\in O_{\tau},$ then since $\alpha_{j}$ is nonincreasing in $j$ the
minimum in
\[
\min_{\sigma\in\Sigma_{K}}\left\{  \sum\nolimits_{j=1}^{K}\alpha_{j}I\left(
x_{\sigma\left(  j\right)  }\right)  \right\}
\]
is at $\sigma=\tau$. Thus
\begin{align*}
&  \inf_{\mathbf{x\in}\mathcal{X}^{K}}\left[  2\sum\nolimits_{j=1}^{K}\alpha
_{j}I\left(  x_{_{j}}\right)  +2F\left(  x_{1}\right)  -\min_{\sigma\in
\Sigma_{K}}\left\{  \sum\nolimits_{j=1}^{K}\alpha_{j}I\left(  x_{\sigma\left(
j\right)  }\right)  \right\}  \right] \\
&  =\min_{\tau\in\Sigma_{K}}\left\{  \inf_{\mathbf{x}\in O_{\tau}}\left[
2\sum\nolimits_{j=1}^{K}\alpha_{j}I\left(  x_{_{j}}\right)  +2F\left(  x_{1}\right)
-\min_{\sigma\in\Sigma_{K}}\left\{  \sum\nolimits_{j=1}^{K}\alpha_{j}I\left(
x_{\sigma\left(  j\right)  }\right)  \right\}  \right]  \right\} \\
&  =\min_{\tau\in\Sigma_{K}}\left\{  \inf_{\mathbf{x}\in O_{\tau}}\left[
\sum\nolimits_{j=1}^{K}\left(  2\alpha_{\tau\left(  j\right)  }-\alpha_{j}\right)
I\left(  x_{\tau\left(  j\right)  }\right)  +2F\left(  x_{1}\right)  \right]
\right\}  .
\end{align*}

Let $\beta_{j}\doteq2\alpha_{\tau\left(  j\right)  }-\alpha_{j}$, and for each
$L\in\{1,\ldots,K\}$ define the sets
\[
O_{\tau}^{L}\doteq\left\{  \left(  x_{\tau\left(  1\right)  },\ldots
,x_{\tau\left(  L\right)  }\right)  :\mathbf{x}\in O_{\tau}\right\}
\]
and
\[
\bar{O}_{\tau}^{L}\left(  \mathbf{y}\right)  \doteq\left\{  \left(
x_{\tau\left(  L\right)  },\ldots,x_{\tau\left(  K\right)  }\right)
:\mathbf{x}\in O_{\tau}\text{ and }\left(  x_{\tau\left(  1\right)  }%
,\ldots,x_{\tau\left(  L\right)  }\right)  =\mathbf{y}\right\}  .
\]
Note that for each $\tau$ (and using that $L$ is the index such that
$\tau\left(  L\right)  =1$)%
\begin{align*}
&  \inf_{\mathbf{x}\in O_{\tau}}\left[  \sum\nolimits_{j=1}^{K}\beta_{j}I\left(
x_{\tau\left(  j\right)  }\right)  +2F\left(  x_{1}\right)  \right] \\
&  =\inf_{\mathbf{x}\in O_{\tau}}\left[  \sum\nolimits_{j=1}^{L-1}\beta_{j}I\left(
x_{\tau\left(  j\right)  }\right)  +\beta_{L}I\left(  x_{\tau\left(  L\right)
}\right)  +\sum\nolimits_{j=L+1}^{K}\beta_{j}I\left(  x_{\tau\left(  j\right)
}\right)  +2F\left(  x_{\tau\left(  L\right)  }\right)  \right] \\
&  =\inf_{\left(  y_{1},\ldots,y_{L}\right)  \in O_{\tau}^{L}}\left[
\begin{array}
[c]{c}%
\sum_{j=1}^{L-1}\beta_{j}I\left(  y_{j}\right)  +\beta_{L}I\left(
y_{L}\right)  +2F\left(  y_{L}\right) \\
+\inf_{\left(  z_{L},\ldots,z_{K}\right)  \in\bar{O}_{\tau}^{L}\left(
y_{1},\ldots,y_{L}\right)  }\left[  \sum_{j=L+1}^{K}\beta_{j}I\left(
z_{j}\right)  \right]
\end{array}
\right]  .
\end{align*}

Next we show that given $\left(  y_{1},\ldots,y_{L}\right)  $ (and noting that
by definition $z_{L}=y_{L}$),
\begin{equation}
\inf_{\left(  z_{L},\ldots,z_{K}\right)  \in\bar{O}_{\tau}^{L}\left(
y_{1},\ldots,y_{L}\right)  }\left[  \sum\nolimits_{j=L+1}^{K}\beta_{j}I\left(
z_{j}\right)  \right]  =\left(  \sum\nolimits_{j=L+1}^{K}\beta_{j}\right)  I\left(
y_{L}\right)  . \label{eqn:parconst}%
\end{equation}
Recall that $\alpha_{1}\geq\alpha_{2}\geq\cdots\geq\alpha_{K}\geq0$. Therefore
$\beta_{K}=2\alpha_{\tau\left(  K\right)  }-\alpha_{K}\geq2\alpha_{K}%
-\alpha_{K}=\alpha_{K}\geq0$. More generally, since $\tau(j),\ldots,\tau(K)$
are distinct values drawn from $\{1,\ldots,K\}$, for each $j$%
\[
\beta_{j}+\cdots+\beta_{K}=2\sum\nolimits_{\ell=j}^{K}\alpha_{\tau\left(  \ell\right)
}-\sum\nolimits_{\ell=j}^{K}\alpha_{\ell}\geq2\sum\nolimits_{\ell=j}^{K}\alpha_{\ell}-\sum\nolimits
_{\ell=j}^{K}\alpha_{\ell}\geq0.
\]
Using $\beta_{K}\geq0$ and the fact that $\left(  z_{L},\ldots,z_{K}\right)
\in\bar{O}_{\tau}^{L}\left(  y_{1},\ldots,y_{L}\right)  $ implies the
restriction%
\[
I\left(  z_{L}\right)  \leq I\left(  z_{L+1}\right)  \leq\cdots\leq I\left(
z_{K}\right)  ,
\]
we can rewrite the infimum as
\begin{align*}
&  \inf_{\left(  z_{L},\ldots,z_{K}\right)  \in\bar{O}_{\tau}^{L}\left(
y_{1},\ldots,y_{L}\right)  }\left[  \sum\nolimits_{j=L+1}^{K}\beta_{j}I\left(
z_{j}\right)  \right] \\
&  \quad=\inf_{\left(  z_{L},\ldots,z_{K}\right)  \in\bar{O}_{\tau}^{L}\left(
y_{1},\ldots,y_{L}\right)  }\left[  \sum\nolimits_{j=L+1}^{K-2}\beta_{j}I\left(
z_{j}\right)  +\left(  \beta_{K-1}+\beta_{K}\right)  I\left(  z_{K-1}\right)
\right]  .
\end{align*}
Iterating, we have (\ref{eqn:parconst}). Hence recalling $D\doteq
\{(I(x),F(x)):x\in\mathcal{X}\}$,%
\begin{align*}
&  \inf_{\mathbf{x\in}\mathcal{X}^{K}}\left[  2\sum\nolimits_{j=1}^{K}\alpha
_{j}I\left(  x_{_{j}}\right)  +2F\left(  x_{1}\right)  -\min_{\sigma\in
\Sigma_{K}}\left\{  \sum\nolimits_{j=1}^{K}\alpha_{j}I\left(  x_{\sigma\left(
j\right)  }\right)  \right\}  \right] \\
&  =\min_{\tau\in\Sigma_{K}}\left\{  \inf_{\mathbf{x}\in O_{\tau}}\left[
\sum\nolimits_{j=1}^{K}\left(  2\alpha_{\tau\left(  j\right)  }-\alpha_{j}\right)
I\left(  x_{\tau\left(  j\right)  }\right)  +2F\left(  x_{1}\right)  \right]
\right\} \\
&  =\min_{\tau\in\Sigma_{K}}\left\{  \inf_{\left(  x_{\tau\left(  1\right)
},\ldots,x_{\tau\left(  L\right)  }\right)  \in O_{\tau}^{L}}\left[
\sum\nolimits_{j=1}^{L-1}\beta_{j}I\left(  x_{\tau\left(  j\right)  }\right)  +\left(
\sum\nolimits_{j=L}^{K}\beta_{j}\right)  I\left(  x_{\tau\left(  L\right)  }\right)
+2F\left(  x_{\tau\left(  L\right)  }\right)  \right]  \right\} \\
&  =\min_{\tau\in\Sigma_{K}}\left\{  \inf_{\substack{\left\{  \left(
I_{\tau\left(  L\right)  },F\right)  :\left(  I_{\tau\left(  L\right)
},F\right)  \in D\right\}  \\\left\{  \left(  I_{\tau\left(  1\right)
},\ldots,I_{\tau\left(  L-1\right)  }\right)  :I_{\tau\left(  1\right)  }\leq
I_{\tau\left(  2\right)  }\leq\cdots\leq I_{\tau\left(  L\right)  }\right\}
}}\left[  \sum\nolimits_{j=1}^{L-1}\beta_{j}I_{\tau\left(  j\right)  }+\left(
\sum\nolimits_{j=L}^{K}\beta_{j}\right)  I_{\tau\left(  L\right)  }+2F\right]
\right\}  .
\end{align*}
The last equality holds because $I$ is continuous on the domain of finiteness.

It remains to show that the last display coincides with $V_{F}\left(
\boldsymbol{\alpha}\right)  $. Recalling the definition of $V_{F}\left(
\boldsymbol{\alpha}\right)  $, we have
\begin{align*}
V_{F}\left(  \boldsymbol{\alpha}\right)   &  \doteq\inf_{\substack{\left(
I_{1},F\right)  \in D\\\left\{  (I_{1},\ldots,I_{K}):I_{j}\in\lbrack
0,I_{1}]\text{ for }j\geq2\right\}  }}\left[  2\sum\nolimits_{j=1}^{K}\alpha_{j}%
I_{j}+2F-\min_{\sigma\in\Sigma_{K}}\left\{  \sum\nolimits_{j=1}^{K}\alpha_{j}%
I_{\sigma\left(  j\right)  }\right\}  \right] \\
&  =\min_{\tau\in\Sigma_{K}}\left\{  \inf_{_{\substack{\left\{  \left(
I_{\tau\left(  L\right)  },F\right)  :\left(  I_{\tau\left(  L\right)
},F\right)  \in D\right\}  \\\left\{  \left(  I_{\tau\left(  1\right)
},\cdots,I_{\tau\left(  K\right)  }\right)  :I_{\tau\left(  1\right)  }\leq
I_{\tau\left(  2\right)  }\leq\cdots\leq I_{\tau\left(  K\right)  }\leq
I_{\tau\left(  L\right)  }\right\}  }}}\left[  \sum\nolimits_{j=1}^{K}\left(
2\alpha_{\tau\left(  j\right)  }-\alpha_{j}\right)  I_{\tau\left(  j\right)
}+2F\right]  \right\}  .
\end{align*}
Since $\boldsymbol{I}\in O_{\tau}$ implies $I_{\tau\left(  j\right)  }\geq
I_{\tau\left(  L\right)  }$ and hence $I_{\tau\left(  j\right)  }%
=I_{\tau\left(  L\right)  }$ for $L<j\leq K$,%
\begin{align*}
&  \inf_{_{_{\substack{\left\{  \left(  I_{\tau\left(  L\right)  },F\right)
:\left(  I_{\tau\left(  L\right)  },F\right)  \in D\right\}  \\\left\{
\left(  I_{\tau\left(  1\right)  },\ldots,I_{\tau\left(  K\right)  }\right)
:I_{\tau\left(  1\right)  }\leq I_{\tau\left(  2\right)  }\leq\cdots\leq
I_{\tau\left(  K\right)  }\leq I_{\tau\left(  L\right)  }\right\}  }}}}\left[
\sum\nolimits_{j=1}^{K}\beta_{j}I_{\tau\left(  j\right)  }+2F\right] \\
&  \quad=\inf_{_{\substack{\left\{  \left(  I_{\tau\left(  L\right)
},F\right)  :\left(  I_{\tau\left(  L\right)  },F\right)  \in D\right\}
\\\left\{  \left(  I_{\tau\left(  1\right)  },\ldots,I_{\tau\left(
L-1\right)  }\right)  :I_{\tau\left(  1\right)  }\leq I_{\tau\left(  2\right)
}\leq\cdots\leq I_{\tau\left(  L\right)  }\right\}  }}}\left[  \sum\nolimits
_{j=1}^{L-1}\beta_{j}I_{\tau\left(  j\right)  }+\left(  \sum\nolimits_{j=L}^{K}%
\beta_{j}\right)  I_{\tau\left(  L\right)  }+2F\right]  .
\end{align*}
This completes the proof of the first part.

To prove the second part, i.e., $\sup_{\boldsymbol{\alpha}}V_{F}\left(
\boldsymbol{\alpha}\right)  =\inf_{x}\{(2-\left(  1/2\right)  ^{K-1})I\left(
x\right)  +2F\left(  x\right)  \},$ first rewrite $V_{F}\left(
\boldsymbol{\alpha}\right)  $ by noticing that since $I_{1}$ is the largest
value in the set $\boldsymbol{I}$,%
\[
\min_{\tau\in\Sigma_{K}}\left\{  \sum\nolimits_{j=1}^{K}\alpha_{j}I_{\tau\left(
j\right)  }\right\}
\]
obtains the minimum at some $\tau\in\Sigma_{K}$ with $\tau\left(  K\right)
=1$. Therefore
\[
  V_{F}\left(  \boldsymbol{\alpha}\right) 
  =\inf_{\substack{\left(  I_{1},F\right)  \in D\\\left\{  \mathbf{I}%
:I_{j}\leq I_{1}\text{ for }j\geq2\right\}  }}\left[  \left(  2\alpha
_{1}-\alpha_{K}\right)  I_{1}+2\sum_{j=2}^{K}\alpha_{j}I_{j}+2F-\min_{\tau
\in\Sigma_{K},\tau\left(  K\right)  =1}\left\{  \sum_{j=1}^{K-1}\alpha
_{j}I_{\tau\left(  j\right)  }\right\}  \right]  .
\]
Suppose we are given any $K-1$ numbers and assign them to $\left\{
I_{j}\right\}  _{j=2,\ldots,K}$ in a certain order. Then the value of%
\[
\min_{\tau\in\Sigma_{K},\tau\left(  K\right)  =1}\left\{  \sum\nolimits_{j=1}%
^{K-1}\alpha_{j}I_{\tau\left(  j\right)  }\right\}
\]
is independent of the order. But since $\alpha_{1}\geq\cdots\geq\alpha_{K}%
\geq0$, by the rearrangement inequality the smallest value of%
\[
\sum\nolimits_{j=2}^{K}\alpha_{j}I_{j}%
\]
is obtained by taking the $I_{j},$ $j\geq2$ in increasing order. By choosing
this ordering of $\left\{  I_{j}\right\}  _{j=2,\ldots,K}$,
\[
\min_{\tau\in\Sigma_{K},\tau\left(  K\right)  =1}\left\{  \sum\nolimits_{j=1}%
^{K-1}\alpha_{j}I_{\tau\left(  j\right)  }\right\}  =\sum\nolimits_{j=2}^{K}%
\alpha_{j-1}I_{j}.
\]
Thus%
\begin{align}
 V_{F}\left(  \boldsymbol{\alpha}\right) \label{eqn:Vaexp}
&  =\inf_{\substack{\left(  I_{1},F\right)  \in D\\\left\{  \mathbf{I}:0\leq
I_{2}\leq\cdots\leq I_{K}\leq I_{1}\right\}  }}\left[  \left(  2\alpha
_{1}-\alpha_{K}\right)  I_{1}+2\sum\nolimits_{j=2}^{K}\alpha_{j}I_{j}+2F-\sum\nolimits_{j=2}%
^{K}\alpha_{j-1}I_{j}\right] \\
&  =\inf_{\substack{\left(  I_{1},F\right)  \in D\\\left\{  \mathbf{I}:0\leq
I_{2}\leq\cdots\leq I_{K}\leq I_{1}\right\}  }}\left[  \left(  2\alpha
_{1}-\alpha_{K}\right)  I_{1}+\sum\nolimits_{j=2}^{K}\left(  2\alpha_{j}-\alpha
_{j-1}\right)  I_{j}+2F\right]  .\nonumber
\end{align}
Using summation by parts and $\alpha_{1}=1$, we have
\begin{align}
V_{F}\left(  \boldsymbol{\alpha}\right) \label{eqn:Vaexp2}
&  =\inf_{\substack{\left(  I_{1},F\right)  \in D\\\left\{  \mathbf{I}:0\leq
I_{2}\leq\cdots\leq I_{K}\leq I_{1}\right\}  }}\left[  \left(  2\alpha
_{1}-\alpha_{K}\right)  I_{1}+\sum\nolimits_{j=2}^{K-1}\alpha_{j}\left(  2I_{j}%
-I_{j+1}\right)  +2\alpha_{K}I_{K}-I_{2}+2F\right]  .
\end{align}
Since $I$ is a rate function and $F$ is continuous and bounded below, there is
$\left(  I_{0},F_{0}\right)  \in D$ such that
\[
\left(  2-\left(  1/2\right)  ^{K-1}\right)  I_{0}+2F_{0}=\inf_{x}\left[
\left(  2-\left(  1/2\right)  ^{K-1}\right)  I\left(  x\right)  +2F\left(
x\right)  \right]  .
\]
Let $\boldsymbol{\alpha}^{\ast}\doteq\left(  1,1/2,\ldots,1/2^{K-1}\right)  $
and $\mathbf{I}^{\ast}=\left(  I_{1}^{\ast},\ldots,I_{K}^{\ast}\right)  ,$
with $I_{1}^{\ast}\doteq I_{0},$ $I_{j}^{\ast}\doteq\left(  1/2\right)
^{K-j+1}I_{0}$ for $j=2,\ldots,K$. We have the following inequalities, which
are explained after the display:
\begin{align*}
\left(  2-\left(  1/2\right)  ^{K-1}\right)  I_{0}+2F_{0}
&  =\inf_{\substack{\left(  I_{1},F\right)  \in D\\\left\{  \mathbf{I}:0\leq
I_{2}\leq\cdots\leq I_{K}\leq I_{1}\right\}  }}\left[  \left(  2\alpha
_{1}^{\ast}-\alpha_{K}^{\ast}\right)  I_{1}+\sum\nolimits_{j=2}^{K}\left(  2\alpha
_{j}^{\ast}-\alpha_{j-1}^{\ast}\right)  I_{j}+2F\right] \\
&  =V_{F}\left(  \boldsymbol{\alpha}^{\ast}\right) \\
&  \leq\sup_{\boldsymbol{\alpha}}V_{F}\left(  \boldsymbol{\alpha}\right) \\
&  \leq\sup_{\boldsymbol{\alpha}}\left[  \left(  2\alpha_{1}-\alpha
_{K}\right)  I_{1}^{\ast}+\sum\nolimits_{j=2}^{K-1}\alpha_{j}\left(  2I_{j}^{\ast
}-I_{j+1}^{\ast}\right)  +2\alpha_{K}I_{K}^{\ast}-I_{2}^{\ast}+2F_{0}\right]
\\
&  =\left(  2-\left(  1/2\right)  ^{K-1}\right)  I_{0}+2F_{0}.
\end{align*}
The first equality follows from $2\alpha_{j}^{\ast}-\alpha_{j-1}^{\ast}=0$ for
$j=2,\ldots,K$; the second equality from (\ref{eqn:Vaexp}); the second
inequality is from (\ref{eqn:Vaexp2}); the third equality uses $\alpha_{1}=1$,
$2I_{j}^{\ast}-I_{j+1}^{\ast}=0$ for $j=2,\ldots,K$, $-$ $\alpha_{K}%
I_{1}^{\ast}+2\alpha_{K}I_{K}^{\ast}=0$ and $I_{2}^{\ast}=\left(  1/2\right)
^{K-1}I_{0}$. We therefore obtain%
\[
\sup_{\boldsymbol{\alpha}}V_{F}\left(  \boldsymbol{\alpha}\right)  =\inf
_{x}\left\{  \left(  2-\left(  1/2\right)  ^{K-1}\right)  I\left(
x\right)  +2F\left(  x\right)  \right\}  .
\]

\end{proof}

\begin{remark}
Note that the proof identifies $(\boldsymbol{\alpha}^{\ast},\mathbf{I}^{\ast
})$\ as a saddle point of a min/max problem.
\end{remark}

\subsection{Estimating the probability of rare events}
\label{sec:estimating_rare_events}
We next state the corresponding results for the INS estimator for
approximating probabilities as in (\ref{eqn:setAprob}). The statements are
what one would obtain by letting $F$ approximate $-M1_{A^{c}}$ and sending
$M\rightarrow\infty$. The proofs are similar to but easier than those of the
risk-sensitive functional, and thus omitted.

\begin{definition}
Given any $K\in\mathbb{N}$ and $\boldsymbol{\alpha}\mathbf{,}$ let
$\{X_{j}^{\varepsilon}\}$ be independent with $X_{j}^{\varepsilon}\sim
\mu^{\alpha_{j}/\varepsilon}$ for $j\in\left\{  1,\ldots,K\right\}  $. With
the weights $\rho^{\varepsilon}\left(  \mathbf{x};\boldsymbol{\alpha}\right)
$ defined according to (\ref{eqn:defofrho}), the $K$ temperature INS estimator
for estimating $\mu^{\varepsilon}\left(  A\right)  $ is
\[
\hat{\theta}^{\varepsilon}\left(  \mathbf{X}^{\varepsilon};\boldsymbol{\alpha
}\right)  =\sum\nolimits_{\sigma\in\Sigma_{K}}\rho^{\varepsilon}\left(  \mathbf{X}%
_{\sigma}^{\varepsilon};\boldsymbol{\alpha}\right)  1_{A}(X_{\sigma\left(
1\right)  }^{\varepsilon}).
\]

\end{definition}

Following the same argument as for $\hat{\eta}^{\varepsilon}\left(
\mathbf{X}^{\varepsilon};\boldsymbol{\alpha}\right)  $ but replacing the
function $e^{-\frac{1}{\varepsilon}F\left(  x\right)  }$ with $1_{A}\left(
x\right)  $ gives

\begin{itemize}
\item $\hat{\theta}^{\varepsilon}\left(  \mathbf{X}^{\varepsilon
};\boldsymbol{\alpha}\right)  $ is an unbiased estimator for $\mu
^{\varepsilon}\left(  A\right)  $,

\item $\hat{\theta}^{\varepsilon}\left(  \mathbf{X}^{\varepsilon
};\boldsymbol{\alpha}\right)  \overset{d}{=}\hat{\theta}^{\varepsilon}\left(
\mathbf{Y}^{\varepsilon};\boldsymbol{\alpha}\right)  $.
\end{itemize}

\begin{theorem}
\label{Thm2} Let $A$ be the closure of its interior. Suppose that
$I:\mathcal{X}\rightarrow\lbrack0,\infty]$ is continuous on its domain of
finiteness. Then for any $K\in\mathbb{N}$ and parameter $\boldsymbol{\alpha}$
\[
\lim_{\varepsilon\rightarrow0}-\varepsilon\log E\left(  \hat{\theta
}^{\varepsilon}\left(  \mathbf{X}^{\varepsilon};\boldsymbol{\alpha}\right)
\right)  ^{2}=V\left(  \boldsymbol{\alpha}\right)  \cdot I\left(  A\right)  ,
\]
where
\[
V\left(  \boldsymbol{\alpha}\right)  =\inf_{\left\{  I:I_{1}=1,I_{j}\in\left[
0,1\right]  \text{ for }j=2,\ldots,K\right\}  }\left[  2\sum\nolimits_{j=1}^{K}%
\alpha_{j}I_{j}-\min_{\sigma\in\Sigma_{K}}\left\{  \sum\nolimits_{j=1}^{K}\alpha
_{j}I_{\sigma\left(  j\right)  }\right\}  \right]  .
\]
Moreover $V\left(  \boldsymbol{\alpha}\right)  $ achieves its maximal value of
$2-\left(  1/2\right)  ^{K}$ at $\boldsymbol{\alpha}^{\ast}=\left(
1,1/2,\ldots,1/2^{K-1}\right)  $.
\end{theorem}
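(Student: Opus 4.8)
The plan is to reuse the proofs of Theorem~\ref{Thm1} and Lemma~\ref{Lem2} almost verbatim, with the continuous factor $e^{-F(x)/\varepsilon}$ replaced throughout by the indicator $1_{A}(x)$; the only structural change is that $1_{A}$ is neither upper- nor lower-semicontinuous, so $A$ must be approached from inside and outside via $A^{\circ}\subseteq A\subseteq\bar{A}$, using the hypothesis $A=\overline{A^{\circ}}$. As before, $\hat{\theta}^{\varepsilon}(\mathbf{X}^{\varepsilon};\boldsymbol{\alpha})\overset{d}{=}\hat{\theta}^{\varepsilon}(\mathbf{Y}^{\varepsilon};\boldsymbol{\alpha})$ reduces the task to the decay rate of $E(\hat{\theta}^{\varepsilon}(\mathbf{Y}^{\varepsilon};\boldsymbol{\alpha}))^{2}$, and by Remark~\ref{Rmk1} we may take $I$ finite and continuous. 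With $f_{\tau}(\mathbf{x})\doteq\sum_{j=1}^{K}\alpha_{j}I(x_{\tau(j)})$, Lemma~\ref{Lem4} gives that $\mathbf{Y}^{\varepsilon}$ satisfies the LDP with rate $\min_{\tau\in\Sigma_{K}}f_{\tau}$, while $\rho^{\varepsilon}(\mathbf{x}_{\sigma};\boldsymbol{\alpha})=e^{-f_{\sigma}(\mathbf{x})/\varepsilon}/\sum_{\tau}e^{-f_{\tau}(\mathbf{x})/\varepsilon}$; expanding the square produces $\sum_{\sigma,\bar{\sigma}\in\Sigma_{K}}E[\rho^{\varepsilon}(\mathbf{Y}^{\varepsilon}_{\sigma};\boldsymbol{\alpha})\rho^{\varepsilon}(\mathbf{Y}^{\varepsilon}_{\bar{\sigma}};\boldsymbol{\alpha})1_{A}(Y^{\varepsilon}_{\sigma(1)})1_{A}(Y^{\varepsilon}_{\bar{\sigma}(1)})]$.

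For the $\liminf$ bound, use $1_{A}\le 1_{\bar{A}}$ and apply part~2 of Lemma~\ref{Lem1} to each term with $g_{1}=f_{\sigma}$, $g_{2}=f_{\bar{\sigma}}$, $\{f_{\ell}\}=\{f_{\tau}:\tau\in\Sigma_{K}\}$, and the closed sets $B_{1}=\{\mathbf{x}:x_{\sigma(1)}\in\bar{A}\}$, $B_{2}=\{\mathbf{x}:x_{\bar{\sigma}(1)}\in\bar{A}\}$; since the rate of $\mathbf{Y}^{\varepsilon}$ is $\min_{\tau}f_{\tau}$, the term $J-2\min_{\ell}f_{\ell}$ collapses to $-\min_{\tau}f_{\tau}$. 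Minimizing over $\sigma,\bar{\sigma}$ via $\liminf(-\varepsilon\log(a_{\varepsilon}+b_{\varepsilon}))=\min(\cdot,\cdot)$ and relabeling coordinates as in Theorem~\ref{Thm1} gives $\liminf_{\varepsilon\to0}-\varepsilon\log E(\hat{\theta}^{\varepsilon}(\mathbf{Y}^{\varepsilon};\boldsymbol{\alpha}))^{2}\ge\inf_{\mathbf{y}:y_{1}\in\bar{A}}[2f_{\iota}(\mathbf{y})-\min_{\tau}f_{\tau}(\mathbf{y})]$. For the $\limsup$ bound, drop all but one $(\sigma,\bar{\sigma})$ term, use $\rho^{\varepsilon}(\mathbf{x};\boldsymbol{\alpha})\ge\frac{1}{K!}e^{-[f_{\iota}(\mathbf{x})-\min_{\tau}f_{\tau}(\mathbf{x})]/\varepsilon}$, and estimate $1_{A}\ge 1_{C}$ for small closed balls $C,C'\subseteq A^{\circ}$ (a closed ball equals the closure of its interior); part~3 of Lemma~\ref{Lem1} on the closed set $\{\mathbf{x}:x_{\sigma(1)}\in C,\ x_{\bar{\sigma}(1)}\in C'\}$, followed by letting the balls shrink around near-minimizers of the constrained problem and invoking continuity of $I$, yields $\limsup_{\varepsilon\to0}-\varepsilon\log E(\hat{\theta}^{\varepsilon}(\mathbf{Y}^{\varepsilon};\boldsymbol{\alpha}))^{2}\le\inf_{\mathbf{y}:y_{1}\in A^{\circ}}[2f_{\iota}(\mathbf{y})-\min_{\tau}f_{\tau}(\mathbf{y})]$. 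Since $I$ is continuous and $\bar{A}=\overline{A^{\circ}}$, the two infima coincide; call the common value $\Psi(\boldsymbol{\alpha})$, so the limit in the theorem equals $\Psi(\boldsymbol{\alpha})$.

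It remains to show $\Psi(\boldsymbol{\alpha})=V(\boldsymbol{\alpha})\,I(A)$, where $I(A)\doteq\inf_{x\in A}I(x)$, and to maximize over $\boldsymbol{\alpha}$. Setting $I_{j}=I(y_{j})$, the constraint $y_{1}\in A$ becomes $I_{1}\in\{I(x):x\in A\}$ with $I_{j}\ge0$ otherwise, and the decomposition $\mathcal{X}^{K}=\cup_{\tau}O_{\tau}$, the rearrangement inequality, and the nonnegativity of the tail sums of $\beta_{j}=2\alpha_{\tau(j)}-\alpha_{j}$ — exactly the computation in the first part of Lemma~\ref{Lem2} — show that at the infimum one may assume $0\le I_{j}\le I_{1}$ for $j\ge2$. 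The objective $2\sum_{j}\alpha_{j}I_{j}-\min_{\sigma}\sum_{j}\alpha_{j}I_{\sigma(j)}$ is positively homogeneous of degree one in $(I_{1},\dots,I_{K})$ and strictly positive (it dominates $\sum_{j}\alpha_{j}I_{j}\ge\alpha_{1}I_{1}$), so scaling out $I_{1}$ gives $\Psi(\boldsymbol{\alpha})=V(\boldsymbol{\alpha})\cdot\inf\{I(x):x\in A\}=V(\boldsymbol{\alpha})\,I(A)$, with $V(\boldsymbol{\alpha})$ the infimum of the same expression under $I_{1}=1$, $I_{j}\in[0,1]$. The maximization of $V$ over admissible $\boldsymbol{\alpha}$ is then the saddle-point argument of the second part of Lemma~\ref{Lem2}: rewriting $V(\boldsymbol{\alpha})$ via summation by parts and using that at $\boldsymbol{\alpha}^{\ast}=(1,1/2,\dots,1/2^{K-1})$ the coefficients $2\alpha^{\ast}_{j}-\alpha^{\ast}_{j-1}$ vanish for $j=2,\dots,K$, so that $V(\boldsymbol{\alpha}^{\ast})=2\alpha^{\ast}_{1}-\alpha^{\ast}_{K}$ independently of $I_{2},\dots,I_{K}$, one checks that $\boldsymbol{\alpha}^{\ast}$ achieves the supremum, which yields the stated maximal value.

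The step I expect to require the most care is reconciling the two one-sided bounds: the clean closed-set estimate of part~2 of Lemma~\ref{Lem1} is available at $\bar{A}$, but on the other side only $1_{A}\ge 1_{A^{\circ}}$ holds while part~3 of Lemma~\ref{Lem1} demands a closed set equal to the closure of its interior, so one must exhaust the open set $A^{\circ}$ by closed balls; it is precisely the hypothesis $A=\overline{A^{\circ}}$, together with continuity of $I$, that forces $\inf_{y_{1}\in\bar{A}}[\,\cdot\,]=\inf_{y_{1}\in A^{\circ}}[\,\cdot\,]$ and hence makes the upper and lower bounds meet.
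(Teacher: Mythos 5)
Your proof is correct and is essentially the argument the paper intends: the paper omits the proof of Theorem \ref{Thm2}, saying it is ``similar to but easier than'' that of Theorem \ref{Thm1}, and your reconstruction uses exactly the same machinery (symmetrization and Lemma \ref{Lem4}, parts 2 and 3 of Lemma \ref{Lem1} after the Remark \ref{Rmk1} reduction, and the Lemma \ref{Lem2}-style rearrangement/tail-sum collapse, with your degree-one homogeneity scaling correctly extracting the factor $I(A)$). One simplification you missed is the reason the paper calls this case ``easier'': the hypothesis $A=\overline{A^{\circ}}$ already makes $A$ closed, so $\{\mathbf{x}:x_{\sigma(1)}\in A\}$ is closed and equal to the closure of its interior, and part 3 of Lemma \ref{Lem1} applies to it directly; the exhaustion of $A^{\circ}$ by closed balls and the subsequent matching of the $\bar{A}$- and $A^{\circ}$-constrained infima are unnecessary (likewise $\bar A=A$ in your lower bound). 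If you do keep that route, note that in a general metric space a closed ball need not be the closure of its interior; use closures of open balls (or any closure of an open set), which are.

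A point you should not gloss over: your saddle-point computation gives $V(\boldsymbol{\alpha}^{\ast})=2\alpha_{1}^{\ast}-\alpha_{K}^{\ast}=2-(1/2)^{K-1}$, not the value $2-(1/2)^{K}$ printed in the statement, so it is inaccurate to say the argument ``yields the stated maximal value.'' Your value is in fact the right one: it matches the $K=2$ heuristic ($\max_{\alpha}\left[(1+\alpha)\wedge(2-\alpha)\right]=3/2$), the Remark following Theorem \ref{Thm1}, which gives the decay rate $(2-(1/2)^{K-1})\inf_{x\in A}I(x)$, and the use of Theorem \ref{Thm2} in Section \ref{sec:examples}; the exponent in the printed statement is evidently a typo. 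Flag this discrepancy explicitly; otherwise no changes are needed.
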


\section{Extensions}

\label{sec:extensions}

The theory of Section \ref{sec:performance} assumes that $\mu^{\varepsilon
}(dx)=\frac{1}{Z^{\varepsilon}}e^{-\frac{1}{\varepsilon}I(x)}\gamma(dx)$ and
that $F$ does not depend on $\varepsilon$. Suppose we know only that $\left\{
\mu^{\varepsilon}\right\}  $ satisfies the LDP with rate function $I,$ and
$\mu^{\varepsilon}$ has a density function $g^{\varepsilon}\left(  x\right)  $
for any $\varepsilon>0.$ Define a single sample of the $K$ temperature INS
estimator based on $\boldsymbol{\alpha}$ for estimating $\int_{\mathcal{X}%
}e^{-\frac{1}{\varepsilon}F^{\varepsilon}(x)}\mu^{\varepsilon}(dx)$ to be
\[
\tilde{\eta}^{\varepsilon}\left(  \mathbf{X}^{\varepsilon};\boldsymbol{\alpha
}\right)  \doteq\sum\nolimits_{\sigma\in\Sigma_{K}}\tilde{\rho}^{\varepsilon}\left(
\mathbf{X}_{\sigma}^{\varepsilon};\boldsymbol{\alpha}\right)  e^{-\frac
{1}{\varepsilon}F^{\varepsilon}\left(  X_{\sigma\left(  1\right)
}^{\varepsilon}\right)  },
\]
where
\[
\tilde{\rho}^{\varepsilon}\left(  \mathbf{x};\boldsymbol{\alpha}\right)
\doteq\frac{\prod_{j=1}^{K}g^{\varepsilon/\alpha_{j}}\left(  x_{j}\right)
}{\sum_{\tau\in\Sigma_{K}}\left[  \prod_{j=1}^{K}g^{\varepsilon/\alpha_{j}%
}\left(  x_{\tau\left(  j\right)  }\right)  \right]  }.
\]
As before, one can prove that the estimator is unbiased and if $\mathbf{Y}%
^{\varepsilon}$ is a symmetrized version of $\mathbf{X}^{\varepsilon},$ then%
\[
\tilde{\eta}^{\varepsilon}\left(  \mathbf{X}^{\varepsilon};\boldsymbol{\alpha
}\right)  \overset{d}{=}\tilde{\eta}^{\varepsilon}\left(  \mathbf{Y}%
^{\varepsilon};\boldsymbol{\alpha}\right)  .
\]
Moreover, recall that%
\[
\rho^{\varepsilon}\left(  \mathbf{x};\boldsymbol{\alpha}\right)  \doteq
\frac{\prod_{j=1}^{K}e^{-\frac{\alpha_{j}}{\varepsilon}I\left(  x_{j}\right)
}}{\sum_{\tau\in\Sigma_{K}}\left[  \prod_{j=1}^{K}e^{-\frac{\alpha_{j}%
}{\varepsilon}I\left(  x_{\tau\left(  j\right)  }\right)  }\right]  }.
\]

\begin{theorem}
\label{Thm3}Let $F:\mathcal{X}\rightarrow\mathbb{R}$ be continuous and bounded
below, and let $I:\mathcal{X}\rightarrow\lbrack0,\infty]$ be continuous on its
domain of finiteness. Suppose that $\varepsilon\log(\tilde{\rho}^{\varepsilon
}/\rho^{\varepsilon})\rightarrow0$ and $F^{\varepsilon}\rightarrow F$
uniformly on each compact set in $\mathcal{X}^{K}$ and $\mathcal{X}$
respectively, and that $\inf_{\varepsilon\in(0,1)}\inf_{x\in\mathcal{X}%
}F^{\varepsilon}(x)>-\infty$. Then
\[
\lim_{\varepsilon\rightarrow0}-\varepsilon\log E\left(  \tilde{\eta
}^{\varepsilon}\left(  \mathbf{X}^{\varepsilon};\boldsymbol{\alpha}\right)
\right)  ^{2}=V_{F}(\boldsymbol{\alpha}),
\]
where $V_{F}(\boldsymbol{\alpha})$ is as defined in Lemma \ref{Lem2}.
\end{theorem}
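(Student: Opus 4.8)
The plan is to reduce Theorem \ref{Thm3} to Theorem \ref{Thm1} by showing that the hypotheses $\varepsilon\log(\tilde\rho^{\varepsilon}/\rho^{\varepsilon})\to 0$ and $F^{\varepsilon}\to F$ uniformly on compacts, together with the uniform lower bound on $F^{\varepsilon}$, are enough to guarantee that the exponential decay rate of $E(\tilde\eta^{\varepsilon})^{2}$ is unchanged when $\tilde\rho^{\varepsilon}$ is replaced by $\rho^{\varepsilon}$ and $F^{\varepsilon}$ by $F$. As in the proof of Theorem \ref{Thm1}, I would first pass to the symmetrized version $\mathbf Y^{\varepsilon}$, so that
\[
E\left(\tilde\eta^{\varepsilon}\left(\mathbf X^{\varepsilon};\boldsymbol\alpha\right)\right)^{2}
=E\left(\tilde\eta^{\varepsilon}\left(\mathbf Y^{\varepsilon};\boldsymbol\alpha\right)\right)^{2}
=\sum_{\sigma\in\Sigma_{K}}\sum_{\bar\sigma\in\Sigma_{K}}
E\left(\tilde\rho^{\varepsilon}\left(\mathbf Y_{\sigma}^{\varepsilon};\boldsymbol\alpha\right)\tilde\rho^{\varepsilon}\left(\mathbf Y_{\bar\sigma}^{\varepsilon};\boldsymbol\alpha\right)e^{-\frac{1}{\varepsilon}F^{\varepsilon}\left(Y_{\sigma(1)}^{\varepsilon}\right)}e^{-\frac{1}{\varepsilon}F^{\varepsilon}\left(Y_{\bar\sigma(1)}^{\varepsilon}\right)}\right),
\]
and then establish matching upper and lower bounds on $-\varepsilon\log$ of this quantity, both equal to the expression $\inf_{\mathbf x}\left[2f_{\iota}(\mathbf x)+2F(x_{1})-\min_{\tau}f_{\tau}(\mathbf x)\right]$ that Theorem \ref{Thm1} identifies with $V_{F}(\boldsymbol\alpha)$, with $f_{\tau}(\mathbf x)=\sum_{j=1}^{K}\alpha_{j}I(x_{\tau(j)})$.

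For the \emph{lower bound}, the strategy is to use the pointwise bound $\tilde\rho^{\varepsilon}(\mathbf x;\boldsymbol\alpha)\le 1$ and Hölder's inequality to separate the contribution of $\tilde\rho^{\varepsilon}/\rho^{\varepsilon}$ from the rest: writing each summand as $\left(\tilde\rho^{\varepsilon}/\rho^{\varepsilon}\right)\cdot\left(\text{same summand with }\rho^{\varepsilon}\right)$ and absorbing $e^{-\frac1\varepsilon F^{\varepsilon}}$ against $e^{-\frac1\varepsilon F}$ up to a factor $e^{-\frac1\varepsilon(F^{\varepsilon}-F)}$. Because $\varepsilon\log(\tilde\rho^{\varepsilon}/\rho^{\varepsilon})\to 0$ and $F^{\varepsilon}-F\to 0$ uniformly on compacts, and because the LDP forces all relevant mass to concentrate on compact level sets of the (finite, by Remark \ref{Rmk1}) rate function $I$, these factors contribute $o(1)$ after $\varepsilon\log$; a Laplace-type argument exactly parallel to the lower-bound half of the proof of Theorem \ref{Thm1} (part 2 of Lemma \ref{Lem1}, applied after restricting to a large compact set $B$ and controlling the complement via the LDP upper bound) then yields
\[
\liminf_{\varepsilon\to0}-\varepsilon\log E\left(\tilde\eta^{\varepsilon}\left(\mathbf Y^{\varepsilon};\boldsymbol\alpha\right)\right)^{2}\ge\inf_{\mathbf x\in\mathcal X^{K}}\left[2f_{\iota}(\mathbf x)+2F(x_{1})-\min_{\tau\in\Sigma_{K}}f_{\tau}(\mathbf x)\right].
\]
The uniform lower bound $\inf_{\varepsilon,x}F^{\varepsilon}(x)>-\infty$ is what makes these manipulations legitimate (it keeps $e^{-\frac1\varepsilon F^{\varepsilon}}$ from blowing up and lets one invoke the bounded-below hypotheses of Lemma \ref{Lem1}).

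For the \emph{upper bound}, the strategy is the same as in Theorem \ref{Thm1}: bound $\tilde\rho^{\varepsilon}(\mathbf Y_{\sigma}^{\varepsilon};\boldsymbol\alpha)$ from below by comparing it to $\rho^{\varepsilon}$. Precisely, $\tilde\rho^{\varepsilon}=\rho^{\varepsilon}\cdot(\tilde\rho^{\varepsilon}/\rho^{\varepsilon})$ and $\rho^{\varepsilon}(\mathbf x;\boldsymbol\alpha)\ge\frac1{K!}e^{-\frac1\varepsilon[f_{\iota}(\mathbf x)-\min_{\tau}f_{\tau}(\mathbf x)]}$ as in the proof of Theorem \ref{Thm1}; combining with $\varepsilon\log(\tilde\rho^{\varepsilon}/\rho^{\varepsilon})\to 0$ on compacts and $F^{\varepsilon}\to F$ on compacts, one applies part 3 of Lemma \ref{Lem1} (with $B$ a suitable compact that is the closure of its interior, plus a routine argument that shrinking to such a $B$ does not change the infimum since $I$ is a rate function) to obtain the reverse inequality for the $\limsup$. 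Matching the two bounds and invoking Lemma \ref{Lem2} to rewrite the common value as $V_{F}(\boldsymbol\alpha)$ finishes the proof.

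The main obstacle is handling the error factors $\tilde\rho^{\varepsilon}/\rho^{\varepsilon}$ and $e^{-\frac1\varepsilon(F^{\varepsilon}-F)}$ \emph{uniformly}: the hypotheses only give uniform convergence on each compact set, whereas the expectations are over all of $\mathcal X^{K}$. The remedy is to split each expectation over a large compact $B$ and its complement $B^{c}$, use the LDP upper bound (and the uniform lower bound on $F^{\varepsilon}$, so that $e^{-\frac1\varepsilon F^{\varepsilon}}\le e^{\frac c\varepsilon}$ for some constant $c$) to show the $B^{c}$ piece decays arbitrarily fast as $B\uparrow\mathcal X^{K}$, and on $B$ replace $\tilde\rho^{\varepsilon}/\rho^{\varepsilon}$ and $F^{\varepsilon}-F$ by their sup-norm-over-$B$ bounds, which are $o(1/\varepsilon)$ in the exponent. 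Once this localization is in place, everything reduces mechanically to the estimates already carried out for Theorem \ref{Thm1}, so no genuinely new Laplace-asymptotics input is needed.
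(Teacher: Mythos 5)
Your overall plan is the right one and is essentially the paper's: localize to a compact set $C$ (possible because $F^{\varepsilon}$ is uniformly bounded below and $I$ has compact level sets, so the contribution of $(C^{K})^{c}$ decays at an arbitrarily fast rate), use the uniform convergence hypotheses on $C^{K}$ to replace $\tilde{\rho}^{\varepsilon}$ by $\rho^{\varepsilon}$ and $F^{\varepsilon}$ by $F$ at the price of multiplicative factors $e^{\pm\delta/\varepsilon}$, and then invoke the already-proved Theorem \ref{Thm1} together with Lemma \ref{Lem2}. Your final paragraph is exactly the paper's mechanism: the paper simply observes that on $C^{K}$ one has $\hat{\eta}^{\varepsilon}e^{-2\delta/\varepsilon}\leq\tilde{\eta}^{\varepsilon}\leq\hat{\eta}^{\varepsilon}e^{2\delta/\varepsilon}$, that the restriction to $C^{K}$ changes neither decay rate, and then lets $\delta\rightarrow0$; no symmetrization, no re-expansion into the double sum over permutations, and no re-application of Lemma \ref{Lem1} is needed, since Theorem \ref{Thm1} is used as a black box.

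The one step you should repair is the proposed use of part 3 of Lemma \ref{Lem1} ``with $B$ a suitable compact that is the closure of its interior.'' The theorem is stated on a general Polish space, and in infinite-dimensional examples (such as $C([0,1])$ in the last example of the paper) compact sets have empty interior, so no nonempty compact set is the closure of its interior; moreover the natural compacts, the sublevel sets $\{I\leq M\}$, need not have this property even in $\mathbb{R}^{d}$. So that route for the upper bound (showing $\limsup_{\varepsilon\rightarrow0}-\varepsilon\log E(\tilde{\eta}^{\varepsilon})^{2}\leq V_{F}(\boldsymbol{\alpha})$) would fail as literally stated. It is also unnecessary: from $E(\tilde{\eta}^{\varepsilon})^{2}\geq E(\tilde{\eta}^{\varepsilon}1_{C^{K}}(\mathbf{X}^{\varepsilon}))^{2}\geq e^{-4\delta/\varepsilon}E(\hat{\eta}^{\varepsilon}1_{C^{K}}(\mathbf{X}^{\varepsilon}))^{2}$, together with the fact that $E(\hat{\eta}^{\varepsilon}1_{(C^{K})^{c}}(\mathbf{X}^{\varepsilon}))^{2}$ decays strictly faster than $V_{F}(\boldsymbol{\alpha})$ for $C$ chosen large enough (so the localized second moment of $\hat{\eta}^{\varepsilon}$ inherits the decay rate $V_{F}(\boldsymbol{\alpha})$ from Theorem \ref{Thm1}), the upper bound follows directly, which is how the paper argues. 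The same remark makes the H\"older step in your lower bound superfluous: the sup-norm bounds on $C^{K}$ already give everything. With these simplifications your argument coincides with the paper's proof.
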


\begin{proof}
Using the uniform bounds on $F^{\varepsilon}$ and the $F$ and the fact that
$I$ is a rate function, there is a compact set $C\subset\mathcal{X}$ such
that
\[
\lim\limits_{\varepsilon\rightarrow0}\varepsilon\log E\left(  \tilde{\eta
}^{\varepsilon}\left(  \mathbf{X}^{\varepsilon};\boldsymbol{\alpha}\right)
\right)  ^{2}=\lim\limits_{\varepsilon\rightarrow0}\varepsilon\log E\left(
\tilde{\eta}^{\varepsilon}\left(  \mathbf{X}^{\varepsilon};\boldsymbol{\alpha
}\right)  1_{\left(  C\right)  ^{K}}\left(  \mathbf{X}^{\varepsilon}\right)
\right)  ^{2}%
\]
and
\[
\lim\limits_{\varepsilon\rightarrow0}\varepsilon\log E\left(  \hat{\eta
}^{\varepsilon}\left(  \mathbf{X}^{\varepsilon};\boldsymbol{\alpha}\right)
\right)  ^{2}=\lim\limits_{\varepsilon\rightarrow0}\varepsilon\log E\left(
\hat{\eta}^{\varepsilon}\left(  \mathbf{X}^{\varepsilon};\boldsymbol{\alpha
}\right)  1_{\left(  C\right)  ^{K}}\left(  \mathbf{X}^{\varepsilon}\right)
\right)  ^{2},
\]
in the sense that each limit must be equal to the other if one of them exists.
We know that
\[
\lim\limits_{\varepsilon\rightarrow0}-\varepsilon\log E\left(  \hat{\eta
}^{\varepsilon}\left(  \mathbf{X}^{\varepsilon};\boldsymbol{\alpha}\right)
\right)  ^{2}=V_{F}(\boldsymbol{\alpha}),
\]
and thus it suffices to prove
\[
\lim\limits_{\varepsilon\rightarrow0}\varepsilon\log E\left(  \tilde{\eta
}^{\varepsilon}\left(  \mathbf{X}^{\varepsilon};\boldsymbol{\alpha}\right)
1_{\left(  C\right)  ^{K}}\left(  \mathbf{X}^{\varepsilon}\right)  \right)
^{2}=\lim\limits_{\varepsilon\rightarrow0}\varepsilon\log E\left(  \hat{\eta
}^{\varepsilon}\left(  \mathbf{X}^{\varepsilon};\boldsymbol{\alpha}\right)
1_{\left(  C\right)  ^{K}}\left(  \mathbf{X}^{\varepsilon}\right)  \right)
^{2}.
\]

Using the uniform convergence of $\varepsilon\log(\tilde{\rho}^{\varepsilon
}/\rho^{\varepsilon})$ and $F^{\varepsilon}$, for any $\delta>0$ there is a
$\zeta$ such that if $0<\varepsilon<\zeta$ then
\[
\sup_{\mathbf{x}\in C^{K}}\left\vert \varepsilon\log\left(  \tilde{\rho
}^{\varepsilon}/\rho^{\varepsilon}\right)  \right\vert <\delta\text{ and }%
\sup_{x\in C}\left\vert F^{\varepsilon}\left(  x\right)  -F\left(  x\right)
\right\vert <\delta.
\]
Thus for any $\mathbf{x}\in C^{K}$ and $x\in C$
\[
\rho^{\varepsilon}\left(  \mathbf{x};\boldsymbol{\alpha}\right)  e^{-\frac
{1}{\varepsilon}\delta}<\tilde{\rho}^{\varepsilon}\left(  \mathbf{x}%
;\boldsymbol{\alpha}\right)  <\rho^{\varepsilon}\left(  \mathbf{x}%
;\boldsymbol{\alpha}\right)  e^{\frac{1}{\varepsilon}\delta}\text{ and
}F\left(  x\right)  -\delta<F^{\varepsilon}\left(  x\right)  <F\left(
x\right)  +\delta.
\]
Furthermore
\begin{align*}
\tilde{\eta}^{\varepsilon}\left(  \mathbf{X}^{\varepsilon};\boldsymbol{\alpha
}\right)   &  =\sum\nolimits_{\sigma\in\Sigma_{K}}\tilde{\rho}^{\varepsilon}\left(
\mathbf{X}_{\sigma}^{\varepsilon};\boldsymbol{\alpha}\right)  e^{-\frac
{1}{\varepsilon}F^{\varepsilon}\left(  X_{\sigma\left(  1\right)
}^{\varepsilon}\right)  }\\
&  \quad\leq\left(  \sum\nolimits_{\sigma\in\Sigma_{K}}\rho^{\varepsilon}\left(
\mathbf{X}_{\sigma}^{\varepsilon};\boldsymbol{\alpha}\right)  e^{-\frac
{1}{\varepsilon}F\left(  X_{\sigma\left(  1\right)  }^{\varepsilon}\right)
}\right)  e^{\frac{2}{\varepsilon}\delta}\\
&  \quad=\hat{\eta}^{\varepsilon}\left(  \mathbf{X}^{\varepsilon
};\boldsymbol{\alpha}\right)  e^{\frac{2\delta}{\varepsilon}}%
\end{align*}
and similarly $\tilde{\eta}^{\varepsilon}\left(  \mathbf{X}^{\varepsilon
};\boldsymbol{\alpha}\right)  \geq\hat{\eta}^{\varepsilon}\left(
\mathbf{X}^{\varepsilon};\boldsymbol{\alpha}\right)  e^{-\frac{2\delta
}{\varepsilon}}$. Putting the estimates together gives%
\[
  \limsup_{\varepsilon\rightarrow0}-\varepsilon\log E\left(  \tilde{\eta
}^{\varepsilon}\left(  \mathbf{X}^{\varepsilon};\boldsymbol{\alpha}\right)
1_{\left(  C\right)  ^{K}}\left(  \mathbf{X}^{\varepsilon}\right)  \right)
^{2}
\leq\limsup_{\varepsilon\rightarrow0}-\varepsilon\log E\left(
\hat{\eta}^{\varepsilon}\left(  \mathbf{X}^{\varepsilon};\boldsymbol{\alpha
}\right)  1_{\left(  C\right)  ^{K}}\left(  \mathbf{X}^{\varepsilon}\right)
\right)  ^{2}+2\delta
\]
and%
\[
  \liminf\limits_{\varepsilon\rightarrow0}-\varepsilon\log E\left(
\tilde{\eta}^{\varepsilon}\left(  \mathbf{X}^{\varepsilon};\boldsymbol{\alpha
}\right)  1_{\left(  C\right)  ^{K}}\left(  \mathbf{X}^{\varepsilon}\right)
\right)  ^{2} \geq\liminf\limits_{\varepsilon\rightarrow0}-\varepsilon\log E\left(
\hat{\eta}^{\varepsilon}\left(  \mathbf{X}^{\varepsilon};\boldsymbol{\alpha
}\right)  1_{\left(  C\right)  ^{K}}\left(  \mathbf{X}^{\varepsilon}\right)
\right)  ^{2}-2\delta.
\]
Since $\delta$ is arbitrary, sending $\delta\rightarrow0$ shows the existence
of
\[
\lim\limits_{\varepsilon\rightarrow0}\varepsilon\log E\left(  \tilde{\eta
}^{\varepsilon}\left(  \mathbf{X}^{\varepsilon};\boldsymbol{\alpha}\right)
1_{\left(  C\right)  ^{K}}\left(  \mathbf{X}^{\varepsilon}\right)  \right)
^{2}%
\]
and
\[
\lim\limits_{\varepsilon\rightarrow0}\varepsilon\log E\left(  \tilde{\eta
}^{\varepsilon}\left(  \mathbf{X}^{\varepsilon};\boldsymbol{\alpha}\right)
1_{\left(  C\right)  ^{K}}\left(  \mathbf{X}^{\varepsilon}\right)  \right)
^{2}=\lim\limits_{\varepsilon\rightarrow0}\varepsilon\log E\left(  \hat{\eta
}^{\varepsilon}\left(  \mathbf{X}^{\varepsilon};\boldsymbol{\alpha}\right)
1_{\left(  C\right)  ^{K}}\left(  \mathbf{X}^{\varepsilon}\right)  \right)
^{2}.
\]

\end{proof}

\begin{remark}
Theorem \ref{Thm3} is indeed an extension of Theorem \ref{Thm1}, since in the
setting of Theorem \ref{Thm1}
\begin{equation}
g^{\varepsilon}(x)=\frac{1}{Z^{\varepsilon}}e^{-\frac{1}{\varepsilon}I(x)},
\label{eqn:g-form}%
\end{equation}
and thus for any $\mathbf{x}$
\[
\tilde{\rho}^{\varepsilon}\left(  \mathbf{x};\boldsymbol{\alpha}\right)
=\rho^{\varepsilon}\left(  \mathbf{x};\boldsymbol{\alpha}\right)  .
\]
A situation where $g^{\varepsilon}$ not of the form (\ref{eqn:g-form}) arises
is with densities that are mixtures. These densities take the form
$g^{\varepsilon}(x)=(\sum_{j=1}^{m}\omega_{j}e^{-\frac{1}{\varepsilon}%
I_{j}(x)})/Z^{\varepsilon}$, where each $I_{j}$ is a rate function and
$\omega_{j}>0$ for all $j$. Since
\[
\left(  \min_{j\in\{1,\ldots,k\}}\omega_{j}\right)  e^{-\frac{1}{\varepsilon
}\min_{k}\left[  I_{k}\left(  x\right)  \right]  }\leq\sum\nolimits_{j=1}^{m}\omega
_{j}e^{-\frac{1}{\varepsilon}I_{j}\left(  x\right)  }\leq\left(  \sum\nolimits
_{j=1}^{m}\omega_{j}\right)  e^{-\frac{1}{\varepsilon}\min_{k}\left[
I_{k}\left(  x\right)  \right]  },
\]
this implies%
\[
\lim_{\varepsilon\rightarrow0}\varepsilon\log\left(  \tilde{\rho}%
^{\varepsilon}\left(  \mathbf{x};\boldsymbol{\alpha}\right)  /\rho
^{\varepsilon}\left(  \mathbf{x};\boldsymbol{\alpha}\right)  \right)  =0
\]
uniformly in $\mathbf{x}$.
\end{remark}

\begin{remark}
By using the allowed $\varepsilon$ dependence of $F^{\varepsilon}$, it is
possible to apply the INS estimator to certain classes of discrete random
variables. For example, let $X$ have the geometric distribution with parameter
$p,$ let $X^{\varepsilon}=\varepsilon X,$ and suppose one wants to estimate
\[
Ee^{-\frac{1}{\varepsilon}G\left(  X^{\varepsilon}\right)  }%
\]
with $G$ continuous and bounded below. We can represent $X^{\varepsilon}$ in
the form $X^{\varepsilon}=f^{\varepsilon}\left(  Y^{\varepsilon}\right)  ,$
where $Y^{\varepsilon}$ is exponentially distributed with parameter
$\lambda/\varepsilon,$ $\lambda\doteq-\log(1-p)$ and
\[
f^{\varepsilon}\left(  y\right)  =\varepsilon\left(  \left\lfloor 
{y}/{\varepsilon}\right\rfloor +1\right)  ,
\]
and therefore consider the problem
\[
Ee^{-\frac{1}{\varepsilon}F^{\varepsilon}\left(  Y^{\varepsilon}\right)  },
\]
where $F^{\varepsilon}=G\circ f^{\varepsilon},$ and both $F^{\varepsilon}$ and
$Y^{\varepsilon}$ satisfy the assumptions in Theorem \ref{Thm3}.
\end{remark}

\begin{remark}\label{rmk:partial_ins}
Suppose $\{\mu ^{\varepsilon }\}_{\varepsilon }$ are product measures $\{\mu
_{1}^{\varepsilon }\times \mu _{2}^{\varepsilon }\}_{\varepsilon }.$
Moreover, assume that $\{\mu _{1}^{\varepsilon }\}_{\varepsilon }$ and $%
\{\mu _{2}^{\varepsilon }\}_{\varepsilon }$ satisfy Condition \ref{con:1} on
Polish spaces $\mathcal{X}_{1}$ and $\mathcal{X}_{2}$,
with rate functions $I_{1}$ and $I_{2}$, respectively. In this case, we can
use INS only on subset of variables. Precisely, given any $K\in 
\mathbb{N}$ and $\boldsymbol{\alpha }\mathbf{,}$ let $\{X_{j}^{\varepsilon
}\}$ be independent with $X_{j}^{\varepsilon }\doteq (X_{1,j}^{\varepsilon
},X_{2,j}^{\varepsilon })\sim \mu ^{\alpha _{j}/\varepsilon }=\mu
_{1}^{\alpha _{j}/\varepsilon }\times \mu _{2}^{\alpha _{j}/\varepsilon }$
for $j\in \left\{ 1,\ldots ,K\right\}$. Define 
\begin{equation*}
\hat{\theta}^{\varepsilon }\left( \mathbf{X}^{\varepsilon };\boldsymbol{%
\alpha }\right) =\sum\nolimits_{\sigma \in \Sigma _{K}}\rho_1 ^{\varepsilon
}\left( \mathbf{X}_{1,\sigma }^{\varepsilon };\boldsymbol{\alpha }\right)
1_{A}(X_{1,\sigma \left( 1\right) }^{\varepsilon },X_{2,\sigma \left( 1\right) }^{\varepsilon })
\end{equation*}%
with the new weights 
\begin{equation*}
\rho_1 ^{\varepsilon }\left( \mathbf{x};\boldsymbol{\alpha }\right) \doteq 
\frac{e^{-\frac{1}{\varepsilon }\sum_{j=1}^{K}\alpha _{j}I_{1}\left(
x_{j}\right) }}{\sum_{\tau \in \Sigma _{K}}e^{-\frac{1}{\varepsilon }%
\sum_{j=1}^{K}\alpha _{j}I_{1}\left( x_{\tau \left( j\right) }\right) }},
\end{equation*}%
where $\mathbf{x}\doteq (x_{1},x_{2},\ldots ,x_{K})$ and with each $x_{j}\in 
\mathcal{X}_{1}$.
Note that the weights here depend only on $I_{1}$ instead of $I.$ One can show
that this estimator is also an unbiased estimator of $\mu ^{\varepsilon }(A)$, and its decay rate is between that of
Monte Carlo and INS on full set of variables.
We will return to this possibility in the last example of the next section.
\end{remark}
\section{Examples}

\label{sec:examples}

In this section we explore numerical simulations of the INS estimator for the estimation of rare sets $A$, as specified in Section \ref{sec:estimating_rare_events}. 
Under the conditions of Theorem \ref{Thm2}, the $K$-temperature estimator $\hat{\theta}_{\text{INS}}$ has an asymptotic decay rate of $(2 - (1/2)^{K-1}) I(A)$ when used to estimate $\mu^\varepsilon(A)$ for a suitable set $A$. 
One can also consider the preasymptotic (or level $\varepsilon$) decay rate given by $- \varepsilon \log E (\hat{\theta}_i^\varepsilon)^2$.
This quantity can be is estimated using independent samples. If $\hat{\theta}_1^\varepsilon, \dots, \hat{\theta}_N^\varepsilon$ are independent samples of an unbiased estimator of $\mu^\varepsilon(A)$, then the \textit{empirical preasymptotic decay rate} is defined as
\begin{equation}\label{eqn:empirical_decay_rate}
-\varepsilon \log \left(\frac{1}{N} \sum_{i=1}^N (\hat{\theta}_i^\varepsilon)^2 \right). 
\end{equation}

Although the preasymptotic decay rate converges to the true decay rate,
it is only in practice that we get a sense of how small $\varepsilon > 0$ must be before we see agreement with the limiting decay rate. One expects the large deviation approximation to be useful for larger values of $\varepsilon$ for some choices of $A$ and to need smaller $\varepsilon$ for others. We briefly revisit the heuristic asymptotic analysis of Section \ref{sec:performance} to elucidate this issue.

First, observe that the choice of $\hat{\theta}_1^\varepsilon, \dots, \hat{\theta}_N^\varepsilon$ which maximizes the quantity \eqref{eqn:empirical_decay_rate} subject to the constraint $(\hat{\theta}_1^\varepsilon + \dots + \hat{\theta}_N^\varepsilon) = N \mu^\varepsilon(A)$ is given by $\hat{\theta}_i = \mu^\varepsilon(A)$. In other words, for a highly accurate estimator to have a near-optimal decay rate it should generate values which are centered around the target value $\mu^\varepsilon(A)$. 

Now consider the case of a two-temperature estimator with independent samples $X_1^\varepsilon$ and $X_2^\varepsilon$, the second denoting the higher temperature; the case of a general number of temperatures is combinatorially more complicated, but the final principle carries over. It was observed in \eqref{eqn:heuristic1} that when $\hat{\theta}_{\text{INS}}^\varepsilon > 0$,  we most often have $\hat{\theta}_{\text{INS}}^\varepsilon = \rho^\varepsilon(X_2^\varepsilon, X_1^\varepsilon)$. This weight can be rewritten as
\begin{equation}\label{eqn:rewrite_weight}
 \rho^\varepsilon(x_2, x_1) = \left[1+\exp\left(\frac{1-\alpha}{\varepsilon}(I(x_2) - I(x_1))\right)\right]^{-1}. 
\end{equation}
For the estimator to have a good decay rate, whenever $X_1^\varepsilon \notin A, X_2^\varepsilon \in A$, the weight $\rho^\varepsilon(X_2^\varepsilon, X_1^\varepsilon)$ needs to be exponentially small as $\varepsilon \rightarrow 0$. It can be seen from \eqref{eqn:rewrite_weight} that $\rho^\varepsilon(x_2, x_1) < 1/2$ is equivalent to $I(x_2) > I(x_1)$, which is plausible since we expect the sample in $A$ to have a higher value of $I$. It is nevertheless possible for small $\varepsilon > 0$ that $X_1^\varepsilon \notin A, X_2^\varepsilon \in A$ while $I(X_2^\varepsilon) < I(X_1^\varepsilon)$. If they occur with high enough probability, such samples will significantly reduce the preasymptotic decay rate. 

Observe that when the target set is of the form $A = \{ x : I(x) \geq c\}$ for some $c >0$, such a mismatch of the weights can never occur since $x_1 \notin A, x_2 \in A$ implies $I(x_2) > I(x_1)$. More generally, sets for which $\{ x : I(x) \geq I(A)\} \setminus A$ has small probability relative to $\{x : I(x) \geq I(A)\}$ are in some sense ``well-described'' by the rate function, and their preasymptotic decay rate trends to the asymptotic value faster. On the other hand, the probabilities of sets which are not of this specific form will in general have larger probability, and so having a large preasymptotic decay rate might not be as important. We will explore these issues in the examples.


Next we introduce some terminology. The word \textit{sample} will
refer to a single generation of a random variable (which could itself be
multidimensional). A \textit{trial} will be a
single realization of the $K$-temperature estimator $\hat{\theta
}_{\operatorname{INS}}^{\varepsilon}$, which requires the generation of $K$
samples, one for each temperature. Using $N\times K$ samples total we can run $N$
trials, and the collection of these trials is called a \textit{simulation}. We define the INS estimate $\hat{p}_{A}^{\varepsilon}$ of $\mathbb{P}%
(X^{\varepsilon}\in A)$ by
\[
\hat{p}_{A}^{\varepsilon}\doteq\frac{1}{N}\sum\nolimits_{i=1}^{N}\hat{\theta
}^{\varepsilon}(X_{i}^{\varepsilon};\boldsymbol{\alpha}).
\]
The standard error $\hat{\sigma}_{N}$ using $N$ samples is the estimated
standard deviation divided by the square root of the number of samples:
\[
\hat{\sigma}_{N}\doteq\frac{1}{\sqrt{N}}\sqrt{\frac{1}{N}\sum\nolimits_{i=1}^{N}%
(\hat{\theta}^{\varepsilon}(X_{i}^{\varepsilon};\boldsymbol{\alpha})-\hat{p}%
_{A}^{\varepsilon})^{2}},
\]
and the relative error $\hat{r}_N$ is the ratio of the estimated standard
deviation and the estimate:%
\[
\hat{r}_N\doteq\frac{\hat{\sigma}_{N}}{\hat{p}_{A}^{\varepsilon}}\sqrt{N}.
\]
The relative error is used to provide formal confidence intervals
\[ [\hat{p}_A^\varepsilon - 1.96\ \hat{r}_N, \hat{p}_A^\varepsilon + 1.96 \ \hat{r}_N]. \]
Finally, we will report the \textit{normalized empirical decay rate}
\[ \hat{\mathcal{D}}_N^\varepsilon \doteq \frac{-\varepsilon \log\left(\frac{1}{N}\sum_{i=1}^N (\hat{\theta}^\varepsilon(X_i^\varepsilon; \boldsymbol\alpha)^2\right)}{-\varepsilon \log\left(\frac{1}{N}\sum_{i=1}^N \hat{\theta}^\varepsilon(X_i^\varepsilon;\boldsymbol \alpha)\right)}.\]
All the examples presented below use the
asymptotically optimal choice of $\boldsymbol\alpha$ identified in Lemma \ref{Lem2}.
\begin{example}
The first example is chosen to demonstrate the theoretical properties of the INS estimator. We consider the case of independent and identically distributed
one-dimensional standard Gaussians, $\xi_{i}\sim N(0,1)$. Set
\[
X^{\varepsilon}\doteq\varepsilon\sum_{i=1}^{\lfloor1/\varepsilon\rfloor}%
\xi_{i}.
\]
The quantity of interest will be $p_{A}^{\varepsilon}\doteq P(X^{\varepsilon}\in
A)$, where $A$ is the
non-convex set $A=(-\infty,-0.25]\cup\lbrack0.2,\infty)$. Although simple,
owing to the non-convexity of $A$ this problem requires some care in
the design of importance sampling schemes for estimating
$p_{A}^{\varepsilon}$ \cite{glawan}. We expect the INS estimator to have a high pre-asymptotic decay rate, because $\{I(x) \geq I(A)\} = (-\infty, 0.2] \cup [0.2, \infty)$ and thus $x \notin A, y \in A$ implies $I(y) > I(x)$ with high probability.
 

In this case there are standard deterministic methods to calculate the target probabilities up to machine error. The first row in Table \ref{table:expected-successes} gives the target temperature and the second row shows the (rounded) probability of $X^{\varepsilon}$
landing in the target set $A$. The third row shows the nearest integer amount
of expected successful samples for ordinary Monte Carlo $\mathbb{E}\sum_{j=1}^{N}1_{A}(X_{j}%
^{\varepsilon})=N\mathbb{P}(X^{\varepsilon}\in A)$ when $N=5 \times 10^5$.\begin{table}[ptbh]
\centering$%
\begin{tabular}
[c]{l r r r r}%
$\varepsilon$ & $10^{-2}$ & $5\times10^{-3}$ & $2\times10^{-3}$ & $1\times10^{-3}$\\ \hline
$p_{A}^{\varepsilon}$ & $2.90\times 10^{-2}$ & $2.54 \times 10^{-3}$ & $3.88\times10^{-6}$ &
$1.27\times10^{-10}$\\
$\left[  Np_{A}^{\varepsilon}\right]  $ & $14,500$ & $1,250$ & $2$ & $0$%
\end{tabular}
\ $ \caption{Probabilities and expected number of successes}%
\label{table:expected-successes}%
\end{table}

Based on the third row of Table \ref{table:expected-successes}, for $N = 5\times 10^5$ we expect
that ordinary Monte Carlo (which corresponds to using a single temperature)
will provide some estimate for $\varepsilon=10^{-2}$ and
$\varepsilon=5\times10^{-3}$, but not for the other two values of $\varepsilon$. In Table
\ref{table:2} we compare the estimates obtained from simulations with
$K=1,\ldots,5$ temperatures. Confidence intervals, computation time and other statistics are not reported for this example. 
\begin{table}[h]
\centering%
\begin{tabular}
[c]{l r r r r}%
$K\backslash\varepsilon$ & $10^{-2}$ & $5\times10^{-3}$ & $2\times10^{-3}$ &
$1\times10^{-3}$\\\hline
$1$ & $2.89 \times 10^{-2}$ & $2.61\times 10^{-3}$ & $0$ & $0$\\
$2$ & $2.86\times 10^{-2}$ & $2.53\times 10^{-3}$ & $4.01\times10^{-6}$ & $5.31\times10^{-10}$\\
$3$ & $2.87\times 10^{-2}$ & $2.61\times 10^{-3}$ & $4.04\times10^{-6}$ & $1.47\times10^{-10}$\\
$4$ & $2.90\times 10^{-2}$ & $2.55\times 10^{-3}$ & $3.95\times10^{-6}$ & $1.35\times10^{-10}$\\
$5$ & $2.93\times 10^{-2}$ & $2.64\times 10^{-3}$ & $3.78\times10^{-6}$ & $1.27\times10^{-10}$%
\end{tabular}
\caption{Estimates of probabilities}%
\label{table:2}%
\end{table}

For each pair $(\varepsilon, K)$, Table \ref{table:2a}
shows the normalized empirical decay rate $\hat{\mathcal{D}}^\varepsilon_N$.\begin{table}[h]
\centering%
\begin{tabular}
[c]{l r r r r}%
$K\backslash\varepsilon$ & $10^{-2}$ & $5\times10^{-3}$ & $2\times10^{-3}$ &
$1\times10^{-3}$\\\hline
$2$ & $1.31$ & $1.31$ & $1.37$ & $1.44$\\
$3$ & $1.46$ & $1.47$ & $1.56$ & $1.59$\\
$4$ & $1.54$ & $1.55$ & $1.64$ & $1.72$\\
$5$ & $1.59$ & $1.62$ & $1.70$ & $1.77$%
\end{tabular}
\caption{Normalized empirical decay rate for the probabilities estimated in Table
\ref{table:2}. No rate is reported for $K=1$ temperature since it is
always equal to 1. }%
\label{table:2a}%
\end{table}
We see that by combining the estimates from different temperatures the INS
estimator can approximate rare events using far fewer samples than ordinary
Monte Carlo. This is due to the relatively large number of successes from the
higher temperatures, which are properly weighted by $\rho^{\varepsilon
}( \mathbf{x};\boldsymbol\alpha)$ to produce an unbiased estimate of $\mathbb{E}%
[1_{A}(X^{\varepsilon})]$ for the target temperature $\varepsilon$. 

In Figure \ref{fig:weight_distribution} we show the empirical distribution of $\hat\theta_{\text{INS}}^\varepsilon$ with $K = 5$ temperatures conditioned on one of the three highest temperatures landing in the target set; the two lower temperatures have few successes and are therefore ignored. We take the negative of the base-10 logarithm for easier readability of the distribution.
\begin{figure}
\centering
\includegraphics[scale=0.25]{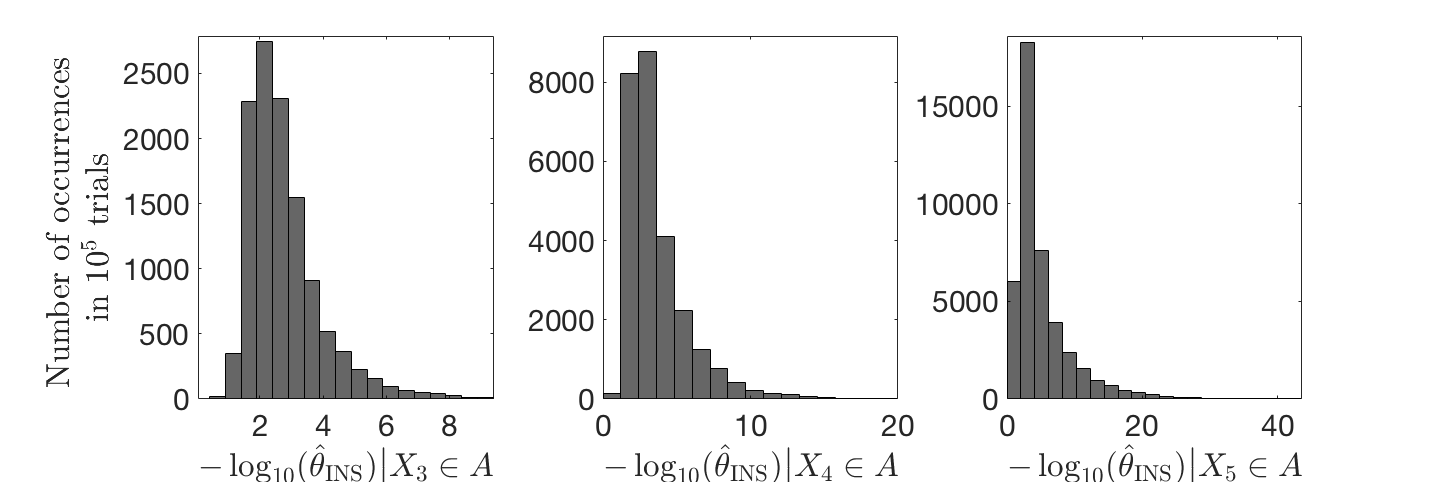}
\caption{The conditional distribution of $\hat{\theta}_{\text{INS}}^\varepsilon$ given that the $k^{th}$ temperature landed in $A$ for $k = 3, 4, 5$. Here, $\varepsilon = 5\times 10^{-3}$ and $10^5$ samples were used for each of the $K = 5$ temperatures.}\label{fig:weight_distribution}
\end{figure}
From the right-most histogram in Figure \ref{fig:weight_distribution} one can estimate that the highest temperature landed in $A$ roughly $4\times 10^4$ so at least $40\%$ of the INS samples produced were non-zero. One can also see that the distribution of weights is roughly appropriate, in that the mode of the histogram times the probability of the $k^{th}$ temperature landing in the target set is on the same order of magnitude as the probability $\mu^\varepsilon(A)$.

We can further explore this example by taking the Gaussians to be $d$-dimensional while keeping the target set dependent on only the first coordinate. The rate function becomes the sum of squares $x_i\in\mathbb{R}^d$, $I(x_i) = \sum_{j=1}^d x_{i,j}^2/2$. Using $N = 5\times 10^5$ total samples spread across $K = 5$ temperatures for $\varepsilon = 5\times 10^{-3}$, we obtain an estimate of $2.49 \times 10^{-3}$, in very close agreement with the theoretical value. 
Following up on the discussion at the beginning of this section,
for this problem the superlevel sets of the rate function provide a relatively poor approximation to the target set.
As we see in Figure \ref{fig:weight_distribution_highdim}, the weights for the low temperatures tend to be too large, and the weights for the high temperatures to be too small. This causes a decrease in the preasymptotic decay rate: the normalized empirical decay rate for the original $d = 1$ was 1.62, while for $d = 5$ it was $1.26$. In either case, however, standard Monte Carlo was outperformed with no problem-specific modification of the algorithm. 

\begin{figure}
\centering
\includegraphics[scale=0.25]{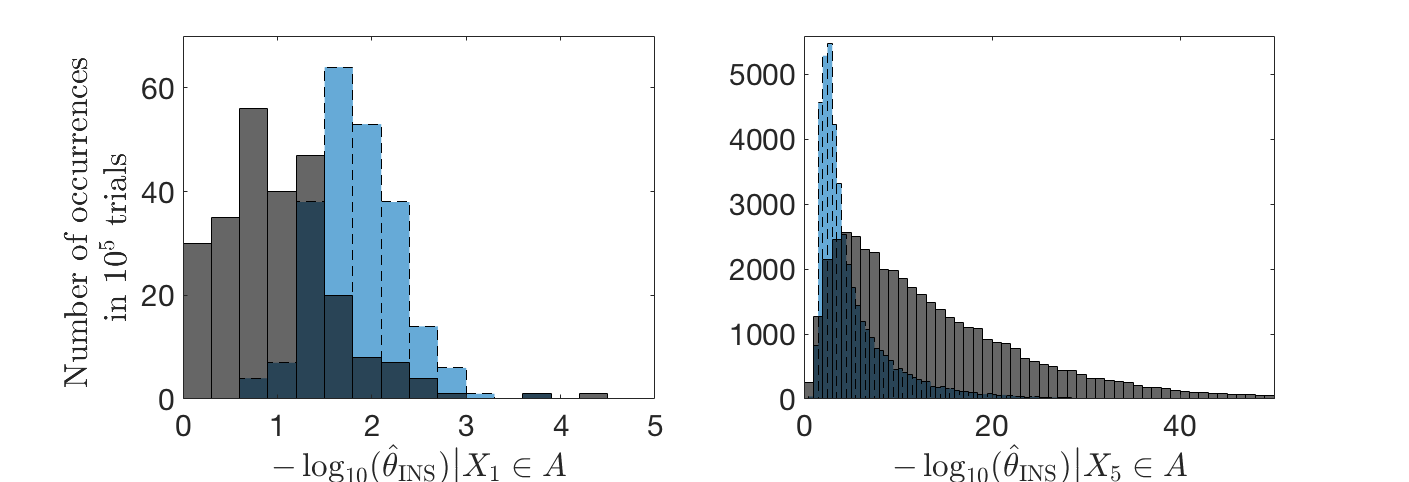}
\caption{Comparison of conditional distributions of INS estimator conditioned on the lowest and highest temperatures landing in the rare event set. In blue with dashed edges is the histogram for the one-dimensional samples, and overlaid in dark gray is the case when 5-dimensional samples are used. }\label{fig:weight_distribution_highdim}
\end{figure}
Finally,  since it is relatively cheap to generate independent Gaussian random variables, we can run many simulations for very small $\varepsilon > 0$ to probe the asymptotic behavior of the estimator. In Figure \ref{fig:performance_plot}, we demonstrate that the normalized preasymptotic decay rates approach $2 - (1/2)^{K-1}$. To avoid the clumping of data points for small $\varepsilon > 0$, we plot the normalized empirical decay rate against the inverse temperature $\varepsilon^{-1}$.
\begin{figure}
\includegraphics[scale=0.3]{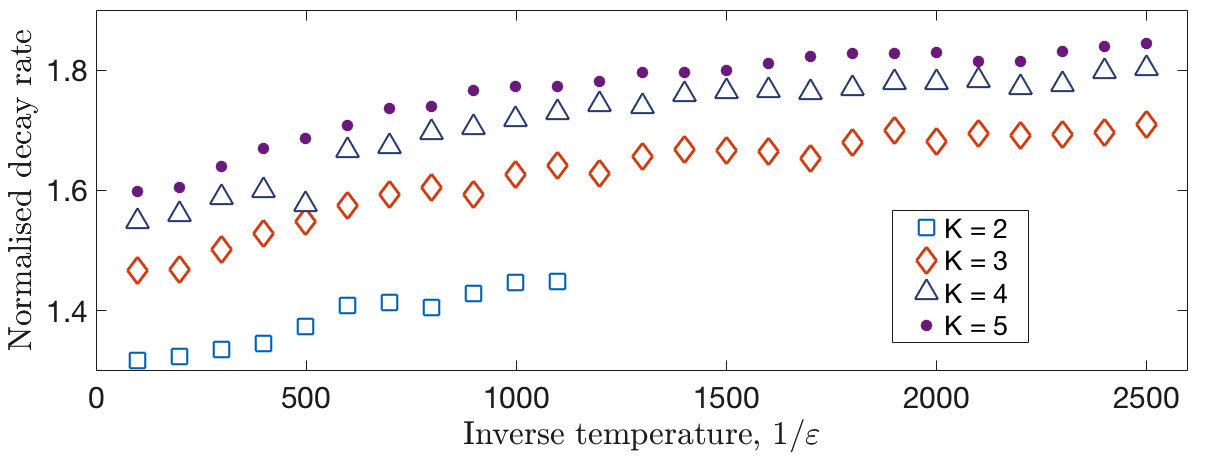}
\caption{The empirical decay rate as a function of the inverse temperature. Estimates for $K=2$ stop at $1/\varepsilon = 1100$ because it becomes computationally infeasible to generate enough samples of the second temperature landing in the target set.}\label{fig:performance_plot}
\end{figure}
\end{example}

\begin{example}
Define the rate function
	\[ I(x) = (x_1^2 + x_2^2 + x_2 + 1 + x_3^4 + 2x_4^2 + x_4 + 1)^2 - 2.64, \]
and consider the associated family of probability distributions
	\[ \mu^\varepsilon(dx) = \frac{1}{Z^\varepsilon} e^{-\frac{1}{\varepsilon} I(x)} dx. \]
As noted in Example \ref{exa:1}, the sequence $\{\mu^\varepsilon\}$ satisfies an LDP with rate function $I(x)$. Observe that $\mu^\varepsilon$ is unimodal, and that the minimum of $I(x)$ occurs at $x_m = (0, -0.5, 0, -0.25)$. This $x_m$ is used to determine the shift of $2.64$ to ensure that $I(x_m) = 0$ and $I(x) > 0$ for $x \neq x_m$. 

We sample from $\mu^\varepsilon$ using MCMC  and apply the INS estimator to the obtained samples without any modification to the INS algorithm. In such a case where a Markov chain is introduced in order to sample from the underlying distribution, it would be more natural to use the original infinite swapping estimator  \cite{dupliupladol} directly on the Markov chain. In the present setting the INS estimator is only asymptotically unbiased, but for a unimodal distribution that appears to be sufficient for good performance. 

To sample from $\mu^\varepsilon$ for some $\varepsilon > 0$ using MCMC, we use a Metropolis-Hastings algorithm with proposal distribution
\[ q^\varepsilon(x, \cdot) \sim x + \varepsilon U v, \]
where $U \sim \text{Unif}([-1,1])$ and $v$ is drawn uniformly from $\{e_1, ..., e_4\}$, where $e_i$ is the $i^{th}$ basis vector. We sub-sample the simulated trajectory every $8/\varepsilon$ steps, and verify that the histograms for the one-dimensional marginal densities look appropriate. The numerical results are shown in Tables \ref{tbl:A1} and \ref{tbl:A2} and are based on $5\times 10^4$ trials using $K = 4$ temperatures for each trial. The computation time does not include the time of generating samples via MCMC. 

We compare two target sets. Define
\begin{align*}
A_1 &= \{x_1 \leq -0.1, x_2 \geq -0.35, x_3 \geq 0, x_4 \geq -0.2\} \\
A_2 &= \{I(x) \geq 0.5\}.
\end{align*}
The sets are chosen so that the probabilities $\mu^\varepsilon(A_i)$ are on the same order of magnitude for $\varepsilon = 1/20$, even though $I(A_1) < I(A_2)$. We expect the decay rate for estimating the probability of landing in $A_1$ to be lower than that of $A_2$ since the set $A_1$ is an intersection of sets and constitutes only a fraction of the region $\{x : I(x) \geq I(A_1)\}$. In contrast, the set $A_2$ is explicitly chosen as a super-level set of the rate function, so its weights are more likely to be well-distributed in the sense described in the introduction of this section. In both cases, however, the empirical decay rate is strictly bigger than $1$.

\begin{table}
\begin{tabular}{l|r|r|r}
$\varepsilon$ & 		$1/20$ & $1/40$ & $1/80$ \\ \hline
LDP Prediction		& $8.49 \times 10^{-2}$ & $7.20 \times 10^{-3}$ & $5.20 \times 10^{-5}$ \\
Estimate 			& $4.83 \times 10^{-4}$ & $1.77\times 10^{-5}$ & $3.87\times 10^{-8}$ \\ 
Confidence interval 	& $[4.06, 5.61]\times 10^{-4}$ & $[1.03, 2.51]\times
10^{-5}$ 	& $[1.47, 6.28]\times 10^{-8}$  \\ 
Relative error 		& $8.19 \times 10^{-2}$ & $2.12 \times 10^{-1}$ & $3.17\times 10^{-1}$ \\ 
Empirical decay rate     		& $1.24$ & $1.29$ & $1.50$ \\ 
Computation time (s) 	& 4 	& 4 & 4
\end{tabular}
\caption{Results for estimating the probability $\mu^\varepsilon(A_1)$ for different values of $\varepsilon$ with $N = 5\times 10^4$ trials. We have $I(A_1) \approx 0.123$. }\label{tbl:A1}
\end{table}
\begin{table}
\begin{tabular}{l|r|r|r}
$\varepsilon$ & 		$1/20$ & $1/40$ & $1/80$ \\ \hline
LDP Prediction		& $4.54 \times 10^{-5}$ & $2.06 \times 10^{-9}$ & $4.25 \times 10^{-18}$ \\
Estimate 			& $2.47 \times 10^{-4}$ & $1.94\times 10^{-8}$ & $6.03\times 10^{-17}$ \\ 
Confidence interval 	& $[2.35, 2.58]\times 10^{-4}$ & $[1.72, 2.15]\times
10^{-8}$ 	& $[4.78, 7.28]\times 10^{-17}$  \\ 
Relative error 		& $2.34 \times 10^{-2}$ & $5.73 \times 10^{-2}$ & $1.05\times 10^{-1}$ \\ 
Empirical decay rate     		& $1.59$ & $1.71$ & $1.83$ \\ 
Computation time (s) 	& 7 	& 5 & 5
\end{tabular}
\caption{Results for estimating the probability $\mu^\varepsilon(A_2)$ for different values of $\varepsilon$ with $N = 5\times 10^4$ trials. We have $I(A_2) \approx 0.5$.}\label{tbl:A2}
\end{table}

Observe that the performances reported in Table \ref{tbl:A1} are significantly lower than the ones in Table \ref{tbl:A2}. When $\varepsilon = 1/20$, the probabilities $\mu^\varepsilon(A_1)$ and $\mu^\varepsilon(A_2)$ are  on the same order of magnitude, though the performance for estimating the latter is markedly higher. This is due to the form of the target set $A_2$ and is accompanied by a disparity of the probabilities: $\mu^\varepsilon(A_2)$ decreases much faster than $\mu^\varepsilon(A_1)$. Since fewer samples are needed to estimate $\mu^{1/80}(A_1)$ than $\mu^{1/80}(A_2)$, a higher decay rate is of less use for the former, which is reflected in the fact that the relative errors are roughly comparable for the two sets (though slightly smaller for $A_1$). 
\end{example}

\begin{example}
For our final example, we consider the level crossing of a Brownian motion over the finite time interval $[0, 1]$. Let $W(t), t\in [0,1]$ be a standard Brownian motion with $W(0) = 0$, $E W(t) = 0$ and $E W(t)^2 = t$. Define the scaled process
\[ W^\varepsilon(t) = \sqrt{\varepsilon} W(t), \]
and let $\mu^\varepsilon$ denote the induced measure on the space of trajectories $C([0, 1])$ equipped with the topology of uniform convergence. The sequence of processes $\{W^\varepsilon, \varepsilon > 0\}$ satisfies a large deviations principle on $C([0, 1])$ with rate function 
\[ I(\phi) = \frac{1}{2}\int_0^1 |\dot{\phi}(s)|^2 ds \]
if $\phi \in C([0, 1])$ is absolutely continuous and $\phi(0)=0$, and $I(\phi) = +\infty$ otherwise. For some fixed $b > 0$, define the rare event set
\[ A_b \doteq \{\phi \in C([0, 1]) : \phi(t) \geq b\text{ for some } t \in [0, 1]\}. \]

One can envision many ways of estimating $\mu^\varepsilon(A_b)$ via simulation. For instance, for $M \geq 1$ let $\mathcal{G}_M : \mathbb{R}^M \rightarrow C([0, 1])$ denote the map
	\[ (\mathcal{G}_M(z_1, \dots, z_N))(t) = \frac{1}{\sqrt{M}}\left(\sum_{i=1}^{\lfloor M t\rfloor} z_i + (M t - \lfloor N t\rfloor) z_{\lfloor M t\rfloor + 1}\right). \]
The map $\mathcal{G}_M$ defines a linearly interpolated random walk with increments $z_i$ over the interval $[i/M - 1/M, i/M]$ for $i = 1, \dots, M$. If $Z_1, \dots, Z_M$ are i.i.d. normal random variables with zero mean and unit variance, Donsker's invariance principle ensures that $W_M^\varepsilon = \sqrt{\varepsilon}\ \mathcal{G}_M(Z_1, \dots, Z_M)$ converges in distribution to $W^\varepsilon$ as $M \rightarrow \infty$. 

The corresponding ``discretised'' rate function on $C([0,1])$ for $\{W^\varepsilon_M, \varepsilon > 0\}$ can be given in terms of the rate function $I_M(z_1, \dots, z_M) = \sum_{j=1}^M z_j^2 / 2$ for ${\varepsilon (Z_{1},\ldots,Z_{M})}$, and this discretized rate function can in principle be used to compute the weights $\rho^\varepsilon$ used in the INS estimator. The problem is that the set $A_b$ is not succinctly described by the increments $(z_1, \dots, z_M)$. Observe that
	\[ (\sqrt{\varepsilon}\ \mathcal{G})^{-1}_M(A_b) = \left\{(z_1, \dots, z_M) \in \mathbb{R}^M : \sum_{i=1}^k z_i \geq \frac{b}{\sqrt{\varepsilon}}\text{ for some } 1 \leq k \leq M \right\}. \]
A standard argument via Jensen's inequality shows that the trajectory $\phi \in A_b$ which minimizes $I(\phi)$ is a straight line between $(0, 0)$ and $(1, b)$, which means that with high probability if the trajectory exceeds $b$ then all the random variables $Z_i$ should be close to $b/\sqrt{M \varepsilon}$. While $I_M(b/\sqrt{M\varepsilon}, \dots, b/\sqrt{M\varepsilon}) = b^2/2\varepsilon$ is independent of $M$, $E I_M(Z_1, \dots, Z_M) \approx M/2$. This is consistent with the definition of $I(\phi) = \infty$ for $\phi$ which are not absolutely continuous, as is the case for paths of the Wiener process (almost surely). This means that as $M$ increases for fixed $\varepsilon > 0$, there are increasingly many ways for the $Z_i$'s to produce a large ``cost'' (a large $I_M$ value) without landing in $(\sqrt{\varepsilon}\mathcal{G}_M(A_b))^{-1}$. This approximation is not well-suited for the INS estimator since all the rate function has a ``weak'' dependence on the individual random variables $Z_i$, with no single one or few largely responsible for determining occurrence of the rare event. 

An alternative approximation is L\'evy's construction of Brownian motion using the Schauder functions \cite{karshr}.  Let $\phi_0(t) = t$ and for $n \geq 1$, let $I(n)$ denote the set of odd integers between $1$ and $2^n$: $I(1) = \{1\}$, $I(2) = \{1, 3\}$, etc. For $n \geq 1$ and $k \in I(n)$, define 
	\[ \phi_{n, k}(t) = \begin{cases} 2^{\frac{n-1}{2}}\left(t - \frac{k-1}{2^n}\right) & t \in \left[ \frac{k-1}{2^n}, \frac{k}{2^n}\right) \\
	2^{\frac{n-1}{2}}\left(\frac{k+1}{2^n} - t\right) & t \in \left[\frac{k}{2^n}, \frac{k+1}{2^n}\right) \\ 
	0 & \text{else} \end{cases}. \]
Let $\mathcal{T}$ be a space of infinite triangular arrays,
\[ \mathcal{T} =  \{ z = (z_0, z_{1, 1}, z_{2, 1}, \dots) :  z_0, z_{n, k} \in \mathbb{R}, n \geq 1, k \in I(n)\}. \]
For $M \geq 1$, define the map 
	\[ (\mathcal{H}_M(z))(t) = z_0 \phi_0(t) + \sum_{n = 1}^M \sum_{k \in I(n)} z_{n, k} \phi_{n, k}(t). \]
The preimage of $A_b$ under $\mathcal{H}_M$ can be expressed recursively as follows. Let $\mathcal{Z}_M = (\mathcal{H}_M)^{-1}(A_b)$. Clearly, $\mathcal{Z}_0 = \{z : z_0 \geq b\}$, while for $M \geq 1$,
	\[\mathcal{Z}_{M+1} = \mathcal{Z}_M \cup  \left( \bigcup_{k \in I(M+1)} \left\{z : (\mathcal{H}_{M+1}(z))\left(\frac{k}{2^{M+1}}\right) \geq b\right\}\right). \]
This decomposition can be deduced from the piecewise linearity of $\mathcal{H}_M(z)$ and shows that if $\mathcal{H}_M(z) \in A_b$, then $\mathcal{H}_{M'}(z)$ for all $M' \geq M$. Note that $b \phi_0(t)$ is the most likely escape trajectory, so $z_0$ is by far the most important contributor to the rare event set. Furthermore, if escape does occur strictly prior to time $1$, it is likely to happen close to time $1$. For this reason $z_{n, 2^n-1}$ is more important than, say, $z_{n, 1}$. 

As illustrated with the random walk approximation $\mathcal{G}_M$, we want to avoid including unimportant variables in the rate function. Thus, rather than simulating completely independent trajectories for each temperature, we couple the trajectories and use independent copies of only the first four random variables $Z_0, Z_{1,1}, Z_{2, 1}, Z_{2,3}$. As pointed out in Remark \ref{rmk:partial_ins}, the estimator remains unbiased but could suffer a decrease in the asymptotic decay rate. On the other hand,
coupling the relatively unimportant random variables removes a source of variance at the preasymptotic level, and at the same time allows control of discretization error by making $M$ large.  

The discretized rate function has
\[ I_{1}(z) = \frac{z_0^2}{2} + \frac{z_{1,1}^2}{2} + \frac{z_{2,1}^2}{2} + \frac{z_{2,3}^2}{2}, \]
which is all that is needed to construct the INS estimator per Remark \ref{rmk:partial_ins}.
We present the numerical results in Table \ref{tbl:Haar}.
\begin{table}
\begin{tabular}{l|r|r|r}
$\varepsilon$ & 		$3 \times 10^{-2}$ & $2 \times 10^{-2}$ & $5 \times 10^{-3}$ \\ \hline
LDP Prediction		& $1.55 \times 10^{-2}$ & $1.90 \times 10^{-3}$ & $1.39 \times 10^{-11}$ \\
Estimate 			& $3.95 \times 10^{-3}$ & $6.59\times 10^{-4}$ & $2.77\times 10^{-11}$ \\ 
Confidence interval 	& $[3.77, 4.12]\times 10^{-3}$ & $[6.03, 7.14]\times
10^{-4}$ 	& $[0, 6.62]\times 10^{-11}$  \\ 
Relative error 		& $2.30 \times 10^{-2}$ & $4.30 \times 10^{-2}$ & $7.09\times 10^{-1}$ \\ 
Empirical decay rate 		& $1.28$ & $1.29$ & $1.55$ \\ 
Computation time (s) 	& 572 	& 567 & 578
\end{tabular}
\caption{Results for estimating the probability of crossing the threshold $ b = 0.5$ for different values of $\varepsilon$, with $M = 10$, $N = 5\times 10^5$ and using $K = 3$ temperatures. The LDP estimate is $\exp(-b^2 / 2\varepsilon)$. The computation time reported includes the time taken to generate the samples.}\label{tbl:Haar}
\end{table}
We first comment on the accuracy of the approximation $\mathcal{H}_M$. For fixed $M \geq 1$, the trajectory $\mathcal{H}_M(z)$ is determined by its values at points $i/2^M$, $i = 0, \dots, 2^M$. Our numerical experiments show that for fixed $\varepsilon > 0$, the estimate can noticeably vary with $M$, which is an issue inherent to the approximation of a continuous time process and cannot be resolved by using more temperatures. Nevertheless, the estimator shows reasonable agreement with the LDP prediction. Furthermore, $10^7$ standard Monte Carlo runs of the random walk approximation $\mathcal{G}_M$ with $M = 14$ gave estimates of $3.91 \times 10^{-3}$ and $4.34 \times 10^{-4}$ for $\varepsilon = 3\times 10^{-2}$ and $2\times 10^{-2}$, respectively, with no estimate being produced for $\varepsilon = 5\times 10^{-3}$.  

The example demonstrates that the INS estimator can effectively transfer information carried by the higher temperatures to the lower ones even in an infinite-dimensional setting, so long as the target set is represented in an appropriate set of coordinates. An interesting question to explore is whether an appropriate basis can be constructed for general diffusion processes.

\end{example}
\section{Relation of INS to other accelerated MC schemes}

\label{sec:relations}

It is useful to compare the qualitative properties of the INS estimator based
on iid samples with other accelerated Monte Carlo schemes. Schemes that one
might consider include standard Monte Carlo, importance sampling and
splitting. Of course in comparison to the INS approach (at least as developed
in this paper) these methods apply to a much broader collection of problems.
Nonetheless, the comparison is useful in highlighting a few properties of the
INS estimator. Our discussion will be brief, and assumes familiarity with
importance sampling and standard forms (e.g., fixed rate) of splitting.

Suppose one were to consider, for example, the problem of estimating the
probability that a Gaussian random variable $X^{\varepsilon}$ of the form
$b+\sqrt{\varepsilon}\sigma Z$ falls into a set $A$, where $b$ is a $d$
dimensional vector, $\sigma$ is a $d\times d$ matrix, and $Z$ is a $N(0,I)$
random vector. In order that there be some theoretical bounds on the variance,
we assume that the implementation of both importance sampling and splitting
are applied to a random walk with each of $n$ summands in the random walk iid
$N(b,\sigma\sigma^{T})$ and denoted by $\{Y_{i},i=1,\ldots,n\}$, with (for
convenience of notation) $\varepsilon=1/n$ and
\begin{equation}
X_{i}^{n}=\frac{1}{n}\left(  Y_{1}+\cdots+Y_{i}\right)  \text{ for }%
i=1,\ldots,n. \label{eqn:RW}%
\end{equation}
The problem of interest is thus estimating $P\{X_{n}^{n}\in A\}$. We assume
the boundary of $A$ is regular enough that $\inf_{x\in A^{\circ}}%
I(x)=\inf_{x\in\bar{A}}I(x)$. In this case there are both importance sampling
schemes as well as importance functions for the splitting implementation that
are known to be asymptotically optimal, so long as the boundary of $A$ is
regular enough \cite{blaglyled,deadup,dupwan5}. Note that in contrast with the
INS estimator, interpreting $X_{n}^{n}$ in terms of the random walk model
(\ref{eqn:RW}) is needed when using these methods to obtain asymptotic
optimality. Indeed, the splitting scheme requires that the splitting event be
triggered by the entrance of $X_{i}^{n}$ into a threshold, whose spacing
should be proportional to $1/n$. Likewise, it has been known since
\cite{glawan} that for the case on non-convex $A$ one is not able to construct
(in general) an asymptotically optimal scheme that considers only a change of
measure applied to $X^{\varepsilon}$ (i.e., $X_{n}^{n}$), but rather should
consider a dynamic change of measure that depends on how the simulated
trajectory evolves \cite{dupwan3}.

Suppose that we denote estimators obtained using standard Monte Carlo,
importance sampling, splitting and the INS schemes by $\hat{\theta}%
_{\text{MC}}^{n}$, $\hat{\theta}_{\text{IS}}^{n}$, $\hat{\theta}_{\text{Spl}%
}^{n}$ and $\hat{\theta}_{\text{INS}}^{n}$, respectively. In all cases we are
attempting to approximate a probability $\theta^{n}$ by summing independent
realizations, and each estimator satisfies $E\hat{\theta}_{\text{*}}^{n}=$
$\theta^{n}$ and hence is unbiased.

In the case of $\hat{\theta}_{\text{MC}}^{n}$, we attempt to approximate
$\theta^{n}$ as a convex combination of $0$'s and $1$'s. The variance is
approximately the second moment, which decays like the probability itself,
i.e., $\exp-nI(A)$. In this case the relative error is of course bad as
$n\rightarrow\infty$. When $E\hat{\theta}_{\text{MC}}^{n}=$ $P\{X_{n}^{n}\in
A\}$ is small it is not the $0$'s which contribute to the (relatively) large
variance, but the occasional $1$'s. In comparison with standard Monte Carlo,
all accelerated schemes attempt to cluster the values taken by $\hat{\theta
}_{\text{*}}^{n}$ (when it is not zero) in a small neighborhood of
$P\{X_{n}^{n}\in A\}$.

In the case of importance sampling the estimate takes the form of
$R(Y_{1},\ldots,Y_{n})1_{\{X_{n}^{n}\in A\}}$, where $R(Y_{1},\ldots,Y_{n})$
is the likelihood ratio of the original distribution with respect to the new
distribution used for simulation. For a well designed scheme these likelihood
ratios will cluster around the target value [approximately $\exp(-nI(A))$], and
for this reason one has good variance reduction and can achieve small relative
errors. However, for a poorly designed scheme \cite{glawan} some samples will
produce extraordinarily large likelihood ratios (in fact exponentially large
in $n$), and so it is essential that the design of the simulating distribution
be done properly, which in many cases can be challenging. The INS estimator is
similar to IS in that it uses weights defined by likelihood ratios in the
construction. It differs in that simulations are required for different values
of the large deviation parameter, and in that the weights can never be larger
than $1$.

In the case of splitting, one uses a collection of closed sets $C_{0}\subset
C_{1}\subset\cdots\subset C_{J}=%
\mathbb{R}
^{d}$ and splitting rates $R_{j}\in%
\mathbb{N}
,$ $j=0,\ldots,J-1.$ A single particle is started at some point in
$C_{J}\setminus C_{J-1}$ and evolves according to the law of $\left\{
X_{i}^{n}\right\}  .$ When a particle enters a set $C_{j}$ for the first time,
it generates $R_{j}-1$ successors which evolve independently according to the
same law after splitting has occurred. Then a single sample estimator
$\hat{\theta}_{\text{Spl}}^{n}$ is defined to be $N/\prod_{i=0}^{J-1}R_{i},$
where $N$ in this case is the number of particles simulated which hit $A$ at
time $n.$ For a scheme to work well, it should be designed so that
$1/\prod_{i=0}^{J-1}R_{i}$ is close to the target value, and at the same time
one would like at least one particle reach $A$ at time $n.$ To design an
effective scheme, one needs to choose the thresholds and splitting rates
carefully. In particular, if the thresholds are too far apart, then the entire
population will die out in the first few generations with very high
probability. On the other hand, if the thresholds are too tight, then the
number of particles will grow exponentially which enhance the computational
effort exponentially. The fact that both $\hat{\theta}_{\text{Spl}}^{n}$ and
$\hat{\theta}_{\text{INS}}^{n}$ are confined to $[0,1]$ means they are in some
sense \textquotedblleft safer\textquotedblright\ than IS with regard to
variance, though in the case of $\hat{\theta}_{\text{Spl}}^{n}$ the
possibility of exponential growth in the number of particles is in fact as bad
as the possibility of exponentially large likelihood ratios.

Thus INS has features in common with both IS and splitting. However, when
applicable, it does not have the same potential for exponentially bad behavior
that these schemes can exhibit when poorly designed.

\medskip
\noindent
{\bf Acknowledgment.} The authors thank an anonymous referee for a number of helpful comments and also suggestions regarding the examples.
Research supported in part by DOE
DE-SC-0010539 and DARPA EQUiPS W911NF-15-2-0122, NSF DMS-1317199 (PD), and NSERC
PGSD3-471576-2015(MS).

\section{Appendix}

While the following result is well known, we were not able to locate a proof
of the multidimensional case, and so include it for completeness.

\begin{lemma}
\label{Lem3}Suppose we are given $h:\mathbb{R}^{d}\rightarrow\mathbb{R}$ that
is continuous and bounded below, and assume it satisfies
\[
\liminf_{M\rightarrow\infty}\inf_{x:\left\Vert x\right\Vert =M}\frac{h\left(
x\right)  }{M}\doteq c>0.
\]
Then
\[
\lim\limits_{\mathbb{\varepsilon}\rightarrow0}\mathbb{\varepsilon}\log
\int_{\mathbb{R}^{d}}e^{-\frac{1}{\varepsilon}h\left(  x\right)  }%
dx=-\inf_{x\in\mathbb{R}^{d}}h\left(  x\right)  .
\]

\end{lemma}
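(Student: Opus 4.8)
**

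The plan is to prove the claimed Laplace-type asymptotics by a standard two-sided argument, obtaining a matching upper and lower bound for $\varepsilon \log \int_{\mathbb{R}^d} e^{-\frac{1}{\varepsilon} h(x)}\, dx$ as $\varepsilon \to 0$.

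First I would handle the lower bound, which is elementary. Let $x^*$ be a point near which $h$ nearly attains its infimum; since $h$ is continuous, for any $\delta > 0$ there is a ball $B(x^*, r)$ on which $h(x) \le \inf h + \delta$. Then
\[
\int_{\mathbb{R}^d} e^{-\frac{1}{\varepsilon} h(x)}\, dx \ge \int_{B(x^*,r)} e^{-\frac{1}{\varepsilon} h(x)}\, dx \ge \mathrm{vol}(B(x^*,r))\, e^{-\frac{1}{\varepsilon}(\inf h + \delta)},
\]
so $\liminf_{\varepsilon \to 0} \varepsilon \log \int e^{-\frac{1}{\varepsilon} h}\, dx \ge -\inf h - \delta$, and letting $\delta \to 0$ gives the lower bound.

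The main work is the upper bound, where the growth condition $c \doteq \liminf_{M \to \infty} \inf_{\|x\| = M} h(x)/M > 0$ is essential to control the tail of the integral. First I would fix any $c' \in (0, c)$; by the growth hypothesis there is $M_0$ such that $h(x) \ge c' \|x\|$ whenever $\|x\| \ge M_0$. Split the integral into the ball $\{\|x\| \le R\}$ and its complement, for $R \ge M_0$ to be chosen. On the ball, $\int_{\|x\| \le R} e^{-\frac{1}{\varepsilon} h(x)}\, dx \le \mathrm{vol}(B(0,R))\, e^{-\frac{1}{\varepsilon}\inf h}$. For the tail, using $h(x) \ge c'\|x\|$ and passing to polar coordinates,
\[
\int_{\|x\| > R} e^{-\frac{1}{\varepsilon} h(x)}\, dx \le \int_{\|x\| > R} e^{-\frac{c'}{\varepsilon}\|x\|}\, dx = \omega_{d-1} \int_R^\infty e^{-\frac{c'}{\varepsilon} s}\, s^{d-1}\, ds,
\]
where $\omega_{d-1}$ is the surface area of the unit sphere. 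A routine estimate (e.g. substituting $s = \varepsilon t / c'$, or bounding $s^{d-1} \le e^{\frac{c'}{2\varepsilon} s} \cdot C_{\varepsilon,d}$ for $s$ large) shows this tail is bounded by something of the form $C \varepsilon^{-d} e^{-\frac{c'}{2\varepsilon} R}$ for small $\varepsilon$, hence its contribution to $\varepsilon \log(\cdot)$ is at most $-\frac{c' R}{2} + o(1)$. Choosing $R$ large enough that $\frac{c' R}{2} > \inf h$ (possible since $\inf h < \infty$ because $h$ is real-valued and bounded below), the ball term dominates, and combining the two pieces via $\varepsilon \log(a_\varepsilon + b_\varepsilon) \le \varepsilon \log 2 + \max\{\varepsilon \log a_\varepsilon, \varepsilon \log b_\varepsilon\}$ yields $\limsup_{\varepsilon \to 0} \varepsilon \log \int e^{-\frac{1}{\varepsilon} h}\, dx \le -\inf h$.

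The one delicate point — the main obstacle, though still routine — is making the tail estimate clean in the multidimensional case: the polynomial factor $s^{d-1}$ from the Jacobian must be absorbed without spoiling the exponential decay rate, and one must be slightly careful that the bound is uniform enough in $\varepsilon$ that taking $\varepsilon \log$ and then $\limsup$ gives exactly $-\frac{c'R}{2}$ (and not something worse). Since $c'$ can be taken arbitrarily close to $c$ and $R$ arbitrarily large, neither constant appears in the final answer, and the two bounds combine to give the stated limit.
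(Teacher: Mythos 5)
Your proposal is correct and follows essentially the same route as the paper: a continuity-based lower bound on a small ball around a near-minimizer, and an upper bound obtained by splitting into a large ball (bounded by its volume times $e^{-\inf h/\varepsilon}$) and a tail controlled via the linear growth of $h$ and polar coordinates. The only cosmetic differences are that the paper first normalizes $\inf h=0$ (so the tail integral need only be shown bounded by a constant, sidestepping your choice of $R$ with $c'R/2>\inf h$) and fixes $c'=c/2$ rather than an arbitrary $c'<c$.
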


\begin{proof}
Let $\lambda\doteq\inf_{x\in\mathbb{R}^{d}}h\left(  x\right)  >-\infty$. Since
$\mathbb{\varepsilon}\log\int_{\mathbb{R}^{d}}e^{-\frac{1}{\varepsilon
}[h\left(  x\right)  +\lambda]}dx=$ $\mathbb{\varepsilon}\log\int
_{\mathbb{R}^{d}}e^{-\frac{1}{\varepsilon}h\left(  x\right)  }dx-\lambda$, we
can assume without loss in proving the lemma that $\lambda=0$. Choose
$M<\infty$ such that $h(x)\geq c\left\Vert x\right\Vert /2$ for $\left\Vert
x\right\Vert \geq M$. Then
\[
  \limsup_{\mathbb{\varepsilon}\rightarrow0}\mathbb{\varepsilon}\log
\int_{\mathbb{R}^{d}}e^{-\frac{1}{\varepsilon}h\left(  x\right)  }dx
\leq\limsup_{\mathbb{\varepsilon}\rightarrow0}\mathbb{\varepsilon}%
\log\left(  \int_{\mathbb{R}^{d}}1_{\{x:\left\Vert x\right\Vert \geq
M\}}e^{-\frac{1}{2\varepsilon}c\left\Vert x\right\Vert }dx+\int_{\mathbb{R}%
^{d}}1_{\{x:\left\Vert x\right\Vert \leq M\}}dx\right)
\]
Changing the coordinates to polar coordinates in the first integral gives
\[
\int_{\mathbb{R}^{d}}1_{\{x:\left\Vert x\right\Vert \geq M\}}e^{-\frac
{1}{2\varepsilon}c\left\Vert x\right\Vert }dx=C_{1}\varepsilon e^{-\frac
{1}{2\varepsilon}cM}+C_{2}\varepsilon^{2}e^{-\frac{1}{2\varepsilon}cM}%
\leq\tilde{C}%
\]
for some positive constant $\tilde{C}$ and for all $\varepsilon\leq1,$ and thus
\[
\limsup_{\mathbb{\varepsilon}\rightarrow0}\mathbb{\varepsilon}\log
\int_{\mathbb{R}^{d}}e^{-\frac{1}{\varepsilon}h\left(  x\right)  }%
dx\leq\limsup_{\mathbb{\varepsilon}\rightarrow0}\mathbb{\varepsilon}%
\log\left(  \tilde{C}+\text{Vol}(M)\right)  =0,
\]
where Vol$(M)$ is the volume of the ball of radius $M$ in $\mathbb{R}^{d}$.

To prove the reverse inequality, let $x^{\ast}$ be such that $|h\left(
x^{\ast}\right)  -\lambda|<\delta/2,$ then by continuity of $h$, given
$\delta>0$ choose $m>0$ such that $|h(x)-h\left(  x^{\ast}\right)  |\leq
\delta/2$ for if $\left\Vert x-x^{\ast}\right\Vert \leq m$. Then
\[
\liminf_{\mathbb{\varepsilon}\rightarrow0}\mathbb{\varepsilon}\log
\int_{\mathbb{R}^{d}}e^{-\frac{1}{\varepsilon}h\left(  x\right)  }dx  
\geq\liminf_{\mathbb{\varepsilon}\rightarrow0}\mathbb{\varepsilon}\log\left[
e^{-\frac{1}{\varepsilon}\delta}\text{Vol}(m)\right] 
  =-\delta.
\]
Since $\delta>0$ is arbitrary, the reverse bound also holds.
\end{proof}

\begin{theorem}
\label{thm:ldp}Suppose $X^{\varepsilon}$ has distribution $\mu^{\varepsilon},$
where $\mu^{\varepsilon}$ is a probability measure on $\mathbb{R}^{d}$ of
Gibbs form, i.e.,
\[
\mu^{\varepsilon}\left(  A\right)  =\frac{\int_{A}e^{-\frac{1}{\varepsilon
}I\left(  x\right)  }dx}{Z^{\varepsilon}}\text{ for }A\in\mathcal{B}%
(\mathbb{R}^{d}),
\]
with normalizing constant $Z^{\varepsilon}=\int_{\mathbb{R}^{d}}e^{-\frac
{1}{\varepsilon}I\left(  x\right)  }dx$. Moreover, assume that $I:\mathbb{R}%
^{d}\rightarrow\lbrack0,\infty)$ is a continuous rate function with
\[
\liminf_{M\rightarrow\infty}\inf_{x:\left\Vert x\right\Vert =M}\frac{I\left(
x\right)  }{M}>0.
\]
Then $\left\{  X^{\varepsilon}\right\}  $ satisfies a large deviation
principle with rate $I.$
\end{theorem}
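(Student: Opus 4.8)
The plan is to establish the Laplace principle for $\{X^{\varepsilon}\}$ and then invoke the equivalence between the Laplace principle and the LDP on a Polish space \cite[Section 1.2]{dupell4}, which applies since $I$ has compact level sets. Concretely, it suffices to show that for every bounded continuous $F:\mathbb{R}^{d}\rightarrow\mathbb{R}$,
\[
\lim_{\varepsilon\rightarrow0}-\varepsilon\log\int_{\mathbb{R}^{d}}e^{-\frac{1}{\varepsilon}F(x)}\mu^{\varepsilon}(dx)=\inf_{x\in\mathbb{R}^{d}}\bigl[F(x)+I(x)\bigr].
\]

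The argument would rest on two applications of Lemma \ref{Lem3}. Writing out the Gibbs density,
\[
\int_{\mathbb{R}^{d}}e^{-\frac{1}{\varepsilon}F(x)}\mu^{\varepsilon}(dx)=\frac{1}{Z^{\varepsilon}}\int_{\mathbb{R}^{d}}e^{-\frac{1}{\varepsilon}\left(F(x)+I(x)\right)}dx ,
\]
so that
\[
-\varepsilon\log\int_{\mathbb{R}^{d}}e^{-\frac{1}{\varepsilon}F(x)}\mu^{\varepsilon}(dx)=\left(-\varepsilon\log\int_{\mathbb{R}^{d}}e^{-\frac{1}{\varepsilon}\left(F(x)+I(x)\right)}dx\right)+\varepsilon\log Z^{\varepsilon}.
\]
I would first check that $h\doteq F+I$ satisfies the hypotheses of Lemma \ref{Lem3}: it is continuous and bounded below (as $F$ is bounded and $I\ge0$), and it inherits the coercivity $\liminf_{M\rightarrow\infty}\inf_{\|x\|=M}h(x)/M>0$ from $I$, since $F(x)/M\rightarrow0$ as $\|x\|=M\rightarrow\infty$ while the analogous $\liminf$ for $I$ is strictly positive by hypothesis. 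Lemma \ref{Lem3} applied with $h=F+I$, and then applied verbatim with $h=I$, give
\[
\lim_{\varepsilon\rightarrow0}\varepsilon\log\int_{\mathbb{R}^{d}}e^{-\frac{1}{\varepsilon}\left(F(x)+I(x)\right)}dx=-\inf_{x}\bigl[F(x)+I(x)\bigr],\qquad \lim_{\varepsilon\rightarrow0}\varepsilon\log Z^{\varepsilon}=-\inf_{x}I(x)=0 ,
\]
the last equality using that $\inf_{x}I(x)=0$ in the normalization of interest (as in Example \ref{exa:1}).

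Substituting these two limits into the previous display yields the desired Laplace principle, and the LDP with rate $I$ follows. I expect no substantive obstacle here; the only points needing care are the inheritance of the coercivity hypothesis of Lemma \ref{Lem3} by $F+I$ (where boundedness of $F$ is used) and the vanishing of $\varepsilon\log Z^{\varepsilon}$, i.e., the fact that $\inf_{x}I(x)=0$.
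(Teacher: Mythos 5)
Your proposal is correct and follows essentially the same route as the paper's proof: reduce to the Laplace principle for bounded continuous test functions, split $-\varepsilon\log\int e^{-F/\varepsilon}d\mu^{\varepsilon}$ into the two integrals, and apply Lemma \ref{Lem3} once to $F+I$ and once to $I$. Your extra care in verifying that $F+I$ inherits the coercivity hypothesis (using boundedness of $F$) and in flagging that $\inf_{x}I(x)=0$ is needed for $\varepsilon\log Z^{\varepsilon}\rightarrow0$ only makes explicit two points the paper's proof passes over silently.
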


\begin{proof}
It suffices to prove $\left\{  X^{\varepsilon}\right\}  $ satisfies the
Laplace principle with rate $I$ \cite[Section 1.2]{dupell4}. Thus we consider
$f:\mathbb{R}^{d}\rightarrow\mathbb{R}$ that is bounded and continuous. We can
write
\[
\mathbb{\varepsilon}\log Ee^{-\frac{1}{\varepsilon}f\left(  X^{\varepsilon
}\right)  }    =\mathbb{\varepsilon}\log\int_{\mathbb{R}^{d}}e^{-\frac
{1}{\varepsilon}f\left(  x\right)  }\mu^{\varepsilon}\left(  dx\right) 
  =\mathbb{\varepsilon}\log\int_{\mathbb{R}^{d}}e^{-\frac{1}{\varepsilon
}\left(  f\left(  x\right)  +I\left(  x\right)  \right)  }%
dx-\mathbb{\varepsilon}\log\int_{\mathbb{R}^{d}}e^{-\frac{1}{\varepsilon
}I\left(  x\right)  }dx.
\]
By applying Lemma \ref{Lem3} to $f+I$ and $I,$ we get
\begin{align*}
\lim_{\mathbb{\varepsilon}\rightarrow0}\mathbb{\varepsilon}\log Ee^{-\frac
{1}{\varepsilon}f\left(  X^{\varepsilon}\right)  }  &  =\lim
_{\mathbb{\varepsilon}\rightarrow0}\mathbb{\varepsilon}\log\int_{\mathbb{R}%
^{d}}e^{-\frac{1}{\varepsilon}\left(  f\left(  x\right)  +I\left(  x\right)
\right)  }dx-\lim_{\mathbb{\varepsilon}\rightarrow0}\mathbb{\varepsilon}%
\log\int_{\mathbb{R}^{d}}e^{-\frac{1}{\varepsilon}I\left(  x\right)  }dx\\
&  =-\inf_{x\in\mathbb{R}^{d}}\left[  f\left(  x\right)  +I\left(  x\right)
\right]  +\inf_{x\in\mathbb{R}^{d}}I\left(  x\right) \\
&  =-\inf_{x\in\mathbb{R}^{d}}\left[  f\left(  x\right)  +I\left(  x\right)
\right]  ,
\end{align*}
and the conclusion follows.
\end{proof}

\begin{proof}
[Proof of Lemma \ref{Lem5}]To show that $\{\tilde{X}^{\varepsilon}\}$
satisfies the LDP on $\mathcal{\tilde{X}}$ with rate function $\tilde{I}$, we
prove the lower bound first and then the upper bound. Given any $x\in
\mathcal{\tilde{X}}$ $\ $and $\delta\in(0,\infty)$, we know that
\[
\tilde{B}\left(  x,\delta\right)  \dot{=}\left\{  y\in\mathcal{\tilde{X}%
}:d\left(  x,y\right)  <\delta\right\}  \subset B\left(  x,\delta\right)
\subset\tilde{B}\left(  x,\delta\right)  \cup\left\{  I=\infty\right\}  .
\]
Since $\left\{  X^{\varepsilon}\right\}  \subset\mathcal{X}$ satisfies the LDP
with rate function $I$,
\[
\liminf\limits_{\varepsilon\rightarrow0}\varepsilon\log P\left(
X^{\varepsilon}\in B\left(  x,\delta\right)  \right)  \geq-\inf_{z\in B\left(
x,\delta\right)  }I\left(  z\right)  =-\inf_{z\in\tilde{B}\left(
x,\delta\right)  }I\left(  z\right)  ,
\]
where the equality holds since $\inf_{z\in B\left(  x,\delta\right)  }I\left(
z\right)  $ is finite. We also have
\[
P\left(  X^{\varepsilon}\in B\left(  x,\delta\right)  \right)    \leq
P(X^{\varepsilon}\in\tilde{B}\left(  x,\delta\right)  )+P\left(
X^{\varepsilon}\in\left\{  I=\infty\right\}  \right) 
  =P(\tilde{X}^{\varepsilon}\in\tilde{B}\left(  x,\delta\right)  )+P\left(
X^{\varepsilon}\in\left\{  I=\infty\right\}  \right)  ,
\]
which implies%
\begin{align*}
 \liminf\limits_{\varepsilon\rightarrow0}\varepsilon\log P\left(
X^{\varepsilon}\in B\left(  x,\delta\right)  \right) 
&  \leq\liminf\limits_{\varepsilon\rightarrow0}\varepsilon\log\left[
2\max\left\{  P(\tilde{X}^{\varepsilon}\in\tilde{B}\left(  x,\delta\right)
),P\left(  X^{\varepsilon}\in\left\{  I=\infty\right\}  \right)  \right\}
\right] \\
&  =\max\left\{  \liminf\limits_{\varepsilon\rightarrow0}\varepsilon\log
P(\tilde{X}^{\varepsilon}\in\tilde{B}\left(  x,\delta\right)  ),\liminf
\limits_{\varepsilon\rightarrow0}\varepsilon\log P\left(  X^{\varepsilon}%
\in\left\{  I=\infty\right\}  \right)  \right\} \\
&  =\liminf\limits_{\varepsilon\rightarrow0}\varepsilon\log P(\tilde
{X}^{\varepsilon}\in\tilde{B}\left(  x,\delta\right)  ).
\end{align*}
Therefore%
\[
\liminf\limits_{\varepsilon\rightarrow0}\varepsilon\log P(\tilde
{X}^{\varepsilon}\in\tilde{B}\left(  x,\delta\right)  )\geq-\inf_{z\in
\tilde{B}\left(  x,\delta\right)  }I\left(  z\right)  ,
\]
which is easily seen to imply the large deviation lower bound for arbitrary open sets.

Next we prove the large deviation upper bound. Given any closed set $F\subset
\mathcal{\tilde{X}}$, let $\bar{F}$ be the closure of $F$ in $\mathcal{X}$,
and observe that $F\subset\bar{F}\subset F\cup\{I=\infty\}$. Thus
\begin{align*}
  \limsup\limits_{\varepsilon\rightarrow0}\varepsilon\log P\left(
X^{\varepsilon}\in\bar{F}\right) 
&  \leq\max\left\{  \limsup\limits_{\varepsilon\rightarrow0}%
\varepsilon\log P(\tilde{X}^{\varepsilon}\in F),\limsup\limits_{\varepsilon
\rightarrow0}\varepsilon\log P\left(  X^{\varepsilon}\in\left\{
I=\infty\right\}  \right)  \right\} \\
&  =\limsup\limits_{\varepsilon\rightarrow0}\varepsilon\log P(\tilde
{X}^{\varepsilon}\in F)\\
&  =\limsup\limits_{\varepsilon\rightarrow0}\varepsilon\log P\left(
X^{\varepsilon}\in F\right) \\
&  \leq\limsup\limits_{\varepsilon\rightarrow0}\varepsilon\log P\left(
X^{\varepsilon}\in\bar{F}\right)  .
\end{align*}
Therefore by the large deviation upper bound for $\left\{  X^{\varepsilon}\right\}  $%
\[
\limsup\limits_{\varepsilon\rightarrow0}\varepsilon\log P(\tilde
{X}^{\varepsilon}\in F)=\limsup\limits_{\varepsilon\rightarrow0}%
\varepsilon\log P\left(  X^{\varepsilon}\in\bar{F}\right)  \leq-\inf_{z\in
\bar{F}}I\left(  z\right)  =-\inf_{z\in F}I\left(  z\right)  ,
\]
which completes the proof of the large deviation upper bound.
\end{proof}

\begin{proof}
[Proof of Lemma \ref{Lem1}]We first consider part 1. For a closed set
$B\subset$ $\mathcal{X}$ and $M\in%
\mathbb{N}
,$ define
\[
g_{M}\left(  x\right)  \doteq1_{B}\left(  x\right)  +M\cdot d\left(
x,B_{M}^{c}\right)  1_{B_{M}\backslash B}\left(  x\right)  ,
\]
where
\[
B_{M}\doteq\left\{  x\in\mathcal{X}\text{ : }d\left(  x,B\right)  \leq\frac
{1}{M}\right\}  .
\]
Note that $B_{M}$ is closed, $B_{M}\downarrow B$, and $g_{M}$ is a bounded
continuous function with $0\leq g_{M}\leq1$. Since for any
$x,\mathbb{\varepsilon}$, and $M$%
\[
1_{B}\left(  x\right)  =e^{-\infty1_{B^{c}}\left(  x\right)  }\leq
e^{-\frac{1}{\varepsilon}M\left(  1-g_{M}\left(  x\right)  \right)  },
\]
we have
\[
-\varepsilon\log E\left(  e^{-\frac{1}{\varepsilon}f\left(  X^{\varepsilon
}\right)  }1_{B}\left(  X^{\varepsilon}\right)  \right)  \geq-\varepsilon\log
E\left(  e^{-\frac{1}{\varepsilon}\left[  f\left(  X^{\varepsilon}\right)
+M\left(  1-g_{M}\left(  X^{\varepsilon}\right)  \right)  \right]  }\right)
.
\]
Moreover, since $f$ is bounded below and lower semi-continuous, so is
$f\left(  x\right)  +M\left(  1-g_{M}\left(  x\right)  \right)  $, and thus by
the Laplace principle upper bound \cite[Corollary 1.2.5]{dupell4}
\[
\liminf_{\varepsilon\rightarrow0}-\varepsilon\log E\left(  e^{-\frac
{1}{\varepsilon}\left[  f\left(  X^{\varepsilon}\right)  +M\left(
1-g_{M}\left(  X^{\varepsilon}\right)  \right)  \right]  }\right)  \geq
\inf_{x\in\mathcal{X}}\left[  f\left(  x\right)  +M\left(  1-g_{M}\left(
x\right)  \right)  +I\left(  x\right)  \right]  .
\]

Let
\[
C_{M}\doteq\inf\limits_{x\in\mathcal{X}}\left[  f\left(  x\right)  +M\left(
1-g_{M}\left(  x\right)  \right)  +I\left(  x\right)  \right]  \text{ and
}C_{\infty}\doteq\inf\limits_{x\in B}\left[  f\left(  x\right)  +I\left(
x\right)  \right]  .
\]
Since $I$ is a rate function $I(x)=0$ for some $x$, and thus $C_{\infty
}<\infty$. If we can find a subsequence $\left\{  M_{j}\right\}
_{j\in\mathbb{N}}$ such that
\[
\lim_{j\rightarrow\infty}C_{M_{j}}\geq C_{\infty},
\]
then we are done. We can assume $\sup_{M}C_{M}<\infty$, since otherwise there
is nothing to prove. Let $b$ be a finite lower bound for $f$. Then for any
$M>C_{\infty}-b$ we can write $C_{M}=\min\{C_{M}^{1},C_{M}^{2}\}$, where
\begin{align*}
C_{M}^{1}  &  \doteq\inf\limits_{x\in B_{M}}\left[  f\left(  x\right)
+M\left(  1-g_{M}\left(  x\right)  \right)  +I\left(  x\right)  \right] \\
C_{M}^{2}  &  \doteq\inf\limits_{x\in B_{M}^{c}}\left[  f\left(  x\right)
+M\left(  1-g_{M}\left(  x\right)  \right)  +I\left(  x\right)  \right]  .
\end{align*}
Since
\[
C_{M}^{2}   =\inf\limits_{x\in B_{M}^{c}}\left[  f\left(  x\right)
+I\left(  x\right)  \right]  +M
  >\inf\limits_{x\in B_{M}^{c}}\left[  f\left(  x\right)  +I\left(  x\right)
\right]  +C_{\infty}-b
  \geq C_{\infty}
  =\inf\limits_{x\in B}\left[  f\left(  x\right)  +I\left(  x\right)
\right]
\]
and
\[
C_{M}^{1}\leq\inf\limits_{x\in B}\left[  f\left(  x\right)  +M\left(
1-g_{M}\left(  x\right)  \right)  +I\left(  x\right)  \right]  =C_{\infty},
\]
we have
\[
C_{M}=\inf\limits_{x\in B_{M}}\left[  f\left(  x\right)  +M\left(
1-g_{M}\left(  x\right)  \right)  +I\left(  x\right)  \right]  .
\]

Let $\left\{  x_{M}\right\}  \subset B_{M}$ come within $1/M$ in the
definition of $C_{M}$, so that
\[
f\left(  x_{M}\right)  +M\left(  1-g_{M}\left(  x_{M}\right)  \right)
+I\left(  x_{M}\right)  \leq C_{M}+\frac{1}{M}.
\]
Since $f$ is bounded below and $1-g_{M}\geq0,$ we have $\sup_{M}I\left(
x_{M}\right)  \leq\sup_{M}C_{M}+1-b<\infty.$ Then because $I$ has compact
level sets, $\left\{  x_{M}\right\}  $ has a subsequence $\left\{  x_{M_{j}%
}\right\}  $ converging to $x^{\ast}\in B\dot{=}\cap_{M}B_{M}.$ Hence,%
\begin{align*}
\liminf_{j\rightarrow\infty}C_{M_{j}}  &  \geq\liminf_{j\rightarrow\infty
}\left[  f\left(  x_{M_{j}}\right)  +M\left(  1-g_{M_{j}}\left(  x_{M_{j}%
}\right)  \right)  +I\left(  x_{M_{j}}\right)  \right] \\
&  \geq\liminf_{j\rightarrow\infty}\left[  f\left(  x_{M_{j}}\right)
+I\left(  x_{M_{j}}\right)  \right] \\
&  \geq f\left(  x^{\ast}\right)  +I\left(  x^{\ast}\right) \\
&  \geq C_{\infty},
\end{align*}
where the second inequality comes from the definition of $g_{M}$, and the
third inequality is due to the lower semicontinuity of $f$ and $I$. This
completes the proof of part 1.

Turning to part 2, observe that
\[
\sum_{\ell=1}^{N}e^{-\frac{1}{\varepsilon}f_{\ell}\left(  X^{\varepsilon
}\right)  }\geq\max_{\ell\in\left\{  1,\ldots,N\right\}  }\left\{
e^{-\frac{1}{\varepsilon}f_{\ell}\left(  X^{\varepsilon}\right)  }\right\}
=e^{-\frac{1}{\varepsilon}\min_{\ell\in\left\{  1,\ldots,N\right\}  }\left\{
f_{\ell}(X^{\varepsilon})\right\}  }.
\]
Thus%
\begin{align*}
&  \liminf_{\varepsilon\rightarrow0}-\varepsilon\log E\left(  \frac
{e^{-\frac{1}{\varepsilon}\left(  g_{1}\left(  X^{\varepsilon}\right)
+g_{1}\left(  X^{\varepsilon}\right)  \right)  }}{\left(  \sum_{\ell=1}%
^{N}e^{-\frac{1}{\varepsilon}f_{\ell}\left(  X^{\varepsilon}\right)  }\right)
^{2}}1_{B_{1}}\left(  X^{\varepsilon}\right)  1_{B_{2}}\left(  X^{\varepsilon
}\right)  \right) \\
&  \quad\geq\liminf_{\varepsilon\rightarrow0}-\varepsilon\log E\left(
\frac{e^{-\frac{1}{\varepsilon}\left(  g_{1}\left(  X^{\varepsilon}\right)
+g_{2}\left(  X^{\varepsilon}\right)  \right)  }}{e^{-2\frac{1}{\varepsilon
}\min_{\ell\in\left\{  1,\ldots,N\right\}  }\left\{  f_{\ell}(X^{\varepsilon
})\right\}  }}1_{B_{1}}\left(  X^{\varepsilon}\right)  1_{B_{2}}\left(
X^{\varepsilon}\right)  \right)  .
\end{align*}
Then by the Cauchy-Schwarz inequality,
\begin{align*}
&  \liminf_{\varepsilon\rightarrow0}-\varepsilon\log E\left(  \frac
{e^{-\frac{1}{\varepsilon}\left(  g_{1}\left(  X^{\varepsilon}\right)
+g_{2}\left(  X^{\varepsilon}\right)  \right)  }}{e^{-2\frac{1}{\varepsilon
}\min_{\ell\in\left\{  1,\ldots,N\right\}  }\left\{  f_{\ell}(X^{\varepsilon
})\right\}  }}1_{B_{1}}\left(  X^{\varepsilon}\right)  1_{B_{2}}\left(
X^{\varepsilon}\right)  \right) \\
&  \quad\geq\frac{1}{2}\liminf_{\varepsilon\rightarrow0}-\varepsilon\log
E\left(  e^{-\frac{1}{\varepsilon}\left(  2g_{1}\left(  X^{\varepsilon
}\right)  -2\min_{\ell\in\left\{  1,\ldots,N\right\}  }\left\{  f_{\ell
}(X^{\varepsilon})\right\}  \right)  }1_{B_{1}}\left(  X^{\varepsilon}\right)
\right) \\
&  \quad\quad+\frac{1}{2}\liminf_{\varepsilon\rightarrow0}-\varepsilon\log
E\left(  e^{-\frac{1}{\varepsilon}\left(  2g_{2}\left(  X^{\varepsilon
}\right)  -2\min_{\ell\in\left\{  1,\ldots,N\right\}  }\left\{  f_{\ell
}(X^{\varepsilon})\right\}  \right)  }1_{B_{2}}\left(  X^{\varepsilon}\right)
\right)  .
\end{align*}
We can now we apply part 1 of the lemma with $B\doteq B_{r}$ and $f\left(
x\right)  =2g_{r}\left(  x\right)  -2\min_{\ell\in\left\{  1,\ldots,N\right\}
}\left\{  f_{\ell}(x)\right\}  ,$ $r=1,2.$ Obviously $B$ is closed and $f$ is
lower semi-continuous and bounded below, so the conditions of part 1 apply.
Therefore%
\begin{align*}
&  \liminf_{\varepsilon\rightarrow0}-\varepsilon\log E\left(  \frac
{e^{-\frac{1}{\varepsilon}\left(  g_{1}\left(  X^{\varepsilon}\right)
+g_{2}\left(  X^{\varepsilon}\right)  \right)  }}{\left(  \sum_{\ell=1}%
^{N}e^{-\frac{1}{\varepsilon}f_{\ell}\left(  X^{\varepsilon}\right)  }\right)
^{2}}1_{B_{1}}\left(  X^{\varepsilon}\right)  1_{B_{2}}\left(  X^{\varepsilon
}\right)  \right) \\
&  \quad\geq\frac{1}{2}\inf_{x\in B_{1}}\left[  2g_{1}(x)+I(x)-2\min_{\ell
}\left\{  f_{\ell}(x)\right\}  \right] +\frac{1}{2}\inf_{x\in B_{2}}\left[  2g_{2}(x)+I(x)-2\min_{\ell
}\left\{  f_{\ell}(x)\right\}  \right] \\
&  \quad\geq\min_{r\in\left\{  1,2\right\}  }\left\{  \inf_{x\in B_{r}}\left[
2g_{r}(x)+I(x)-2\min_{\ell}\left\{  f_{\ell}(x)\right\}  \right]  \right\}  .
\end{align*}

For part 3, if $B\subset\left\{  I=\infty\right\}  ,$ then there is nothing to
prove. If $B\cap\left\{  I<\infty\right\}  \neq\emptyset$, then $\inf_{x\in
B}\left[  f\left(  x\right)  +I\left(  x\right)  \right]  <\infty$. Since $f$
and $I$ are both bounded below, $f$ is continuous, $I$ is continuous on the
domain of finiteness and $B$ is the closure of its interior, for any $\nu>0$
there exists $x_{\nu}$ in the interior of $B$ and $\delta>0$ such that%
\[
f\left(  x_{\nu}\right)  +I\left(  x_{\nu}\right)  \leq\inf_{x\in B}\left[
f\left(  x\right)  +I\left(  x\right)  \right]  +\nu,
\]
and also $\left\vert f\left(  x_{\nu}\right)  -f\left(  y\right)  \right\vert
<\nu$ whenever $y$ lies in the open ball $B\left(  x_{\nu},\delta\right)
\doteq\left\{  y\in\mathcal{X}:d\left(  y,x_{\nu}\right)  <\delta\right\}  .$
We can choose $\delta>0$ small enough such that $B\left(  x_{\nu}%
,\delta\right)  \subset B$. This implies
\[
E\left[  e^{-\frac{1}{\varepsilon}f\left(  X^{\varepsilon}\right)  }%
1_{B}\left(  X^{\varepsilon}\right)  \right]     \geq E\left[  e^{-\frac
{1}{\varepsilon}f\left(  X^{\varepsilon}\right)  }1_{B\cap B\left(  x_{\nu
},\delta\right)  }\left(  X^{\varepsilon}\right)  \right] 
  \geq e^{-\frac{1}{\varepsilon}\left(  f\left(  x_{\nu}\right)  +\nu\right)
}P\left(  X^{\varepsilon}\in B\left(  x_{\nu},\delta\right)  \right)  .
\]
Then by the large deviation principle for $\left\{  X^{\varepsilon}\right\}  $
with open set $B\left(  x_{\nu},\delta\right)  $,
\[
\limsup_{\varepsilon\rightarrow0}-\varepsilon\log P\left(  X^{\varepsilon}\in
B\left(  x_{\nu},\delta\right)  \right)  \leq I\left(  B\left(  x_{\nu}%
,\delta\right)  \right)  .
\]
Therefore
\begin{align*}
\limsup_{\varepsilon\rightarrow0}-\varepsilon\log E\left[  e^{-\frac
{1}{\varepsilon}f\left(  X^{\varepsilon}\right)  }1_{B\left(  x_{\nu}%
,\delta\right)  }\left(  X^{\varepsilon}\right)  \right] 
 &  \leq f\left(  x_{\nu}\right)  +\nu+\limsup_{\varepsilon
\rightarrow0}-\varepsilon\log P\left(  X^{\varepsilon}\in B\left(  x_{\nu
},\delta\right)  \right) \\
 &  \leq f\left(  x_{\nu}\right)  +\nu+I\left(  B\left(  x_{\nu
},\delta\right)  \right) \\
 &  \leq f\left(  x_{\nu}\right)  +\nu+I\left(  x_{\nu}\right) \\
&  \leq\inf_{x\in B}\left[  f\left(  x\right)  +I\left(  x\right)
\right]  +\nu.
\end{align*}
Sending $\nu\rightarrow0$ completes the proof.
\end{proof}

\bibliographystyle{plain}
\bibliography{main}

\begin{thebibliography}{10}

\bibitem{blaglyled}
Jose Blanchet, Peter Glynn, and Kevin Leder.
\newblock On {L}yapunov inequalities and subsolutions for efficient importance
  sampling.
\newblock {\em ACM Trans. Model. Comput. Simul.}, 22(3):13:1--13:27, August
  2012.

\bibitem{deadup}
T.~Dean and P.~Dupuis.
\newblock Splitting for rare event simulation: A large deviations approach to
  design and analysis.
\newblock {\em Stoch.\ Proc.\ Appl.}, 119:562--587, 2009.

\bibitem{dupell4}
P.~Dupuis and R.~S. Ellis.
\newblock {\em A Weak Convergence Approach to the Theory of Large Deviations}.
\newblock John Wiley \& Sons, New York, 1997.

\bibitem{dupliupladol}
P.~Dupuis, Y.~Liu, N.~Plattner, and J.D. Doll.
\newblock On the infinite swapping limit for parallel tempering.
\newblock {\em SIAM J. Multiscale Model. Simul.}, 10:986--1022, 2012.

\bibitem{dupwan3}
P.~Dupuis and H.~Wang.
\newblock Importance sampling, large deviations, and differential games.
\newblock {\em Stoch.\ and Stoch.\ Reports.}, 76:481--508, 2004.

\bibitem{dupwan5}
P.~Dupuis and H.~Wang.
\newblock Subsolutions of an {I}saacs equation and efficient schemes for
  importance sampling.
\newblock {\em Math. Oper. Res.}, 32:1--35, 2007.

\bibitem{glawan}
P.~Glasserman and Y.~Wang.
\newblock Counter examples in importance sampling for large deviations
  probabilities.
\newblock {\em Ann. Appl. Prob.}, 7:731--746, 1997.

\bibitem{harlitpol}
G.H. Hardy, J.E. Littlewood, and G.~P{\'o}lya.
\newblock {\em Inequalities}.
\newblock Cambridge Mathematical Library. Cambridge University Press, 1952.

\bibitem{karshr}
I.~Karatzas and S.~E. Shreve.
\newblock {\em Brownian Motion and Stochastic Calculus}.
\newblock Springer-Verlag, New York, 1988.

\bibitem{whi}
P.~Whittle.
\newblock Risk--sensitive linear/quadratic/guassian control.
\newblock {\em Adv. Appl. Prob.}, 13:777--784, 1981.

\end{thebibliography}

\end{document}